\definecolor{myurlcolor}{rgb}{0,0,0.4}
\definecolor{mycitecolor}{rgb}{0,0.5,0}
\definecolor{myrefcolor}{rgb}{0.5,0,0}
\newtheorem{proposition}{Proposition}[section]
\newtheorem{theorem}[proposition]{Theorem}
\newtheorem{definition}[proposition]{Definition}
\newtheorem{lemma}[proposition]{Lemma}
\newtheorem{corollary}[proposition]{Corollary}
\newtheorem{conjecture}[proposition]{Conjecture}
\newtheorem{problem}[proposition]{Problem}
\theoremstyle{definition}
\newtheorem{example}[proposition]{Example}
\newtheorem{remark}[proposition]{Remark}
\def\be{\begin{equation}}
\def\ee{\end{equation}}
\def\ba{\begin{eqnarray}}
\def\ea{\end{eqnarray}}
\let\originalleft\left
\let\originalright\right
\def\left#1{\mathopen{}\originalleft#1}
\def\right#1{\originalright#1\mathclose{}}
\newcommand{\Nl}{\mathbb{N}}
\newcommand{\Zl}{\mathbb{Z}}
\newcommand{\Rl}{\mathbb{R}}
\newcommand{\Cl}{\mathbb{C}}
\newcommand{\Tl}{\mathbb{T}} 
\newcommand{\Fl}{\mathbb{F}} 
\newcommand{\defin}{:=}
\newcommand{\im}{\mathrm{im}} 
\newcommand{\implproof}[2]{\underline{\ref{#1}$\Rightarrow$\ref{#2}:}} 
\newcommand{\id}{\mathrm{id}} 
\newcommand{\spc}{\mathrm{sp}} 
\newcommand{\Sh}{\mathrm{Sh}} 
\newcommand{\pushout}[2]{\tensor[_{#1}]{\amalg}{_{#2}}} 
\newcommand{\pullback}[2]{\tensor[_{#1}]{\times}{_{#2}}} 
\newcommand{\tr}{\mathrm{tr}} 
\newcommand{\op}{\mathrm{op}} 
\DeclareSymbolFont{symbolsC}{U}{txsyc}{m}{n}
\DeclareMathSymbol{\Perp}{\mathrel}{symbolsC}{121} 
\newcommand{\Sets}{\mathsf{Set}}
\newcommand{\CHs}{\mathsf{CHaus}}
\newcommand{\CGHs}{\mathsf{CGHaus}} 
\newcommand{\Calg}{\mathsf{C^*alg}_1} 
\newcommand{\cCalg}{\mathsf{cC^*alg}_1} 
\newcommand{\pCalg}{\mathsf{pC^*alg}_1} 
\newcommand{\aCalg}{\mathsf{aC^*alg}_1} 
\newcommand{\Grp}{\mathsf{Grp}}
\newcommand{\pGrp}{\mathsf{pGrp}} 
\newcommand{\aGrp}{\mathsf{aGrp}} 
\title{(Almost) C*-algebras as sheaves with self-action}
\author{Cecilia Flori}
\author{Tobias Fritz}
\date{\today}
\address{Perimeter Institute for Theoretical Physics, Waterloo, Canada\vspace{-6pt}}
\address{Imperial College, London}
\email{ceciliaflori@imperial.ac.uk}
\address{Perimeter Institute for Theoretical Physics, Waterloo, Canada\vspace{-6pt}}
\address{Max Planck Institute for Mathematics in the Sciences, Leipzig, Germany}
\email{fritz@mis.mpg.de}
\keywords{Axiomatics of C*-algebras; sheaf theory; algebraic quantum mechanics; topos quantum theory}
\subjclass[2010]{Primary: 46L05 (General theory of C*-algebras), 46L60 (Applications of selfadjoint operator algebras to physics). Secondary: 18F20 (Presheaves and sheaves), 20A05 (Group theory, axiomatics and elementary properties).}
\thanks{\textit{Acknowledgements.} We thank Benno van den Berg, Chris Heunen and Manuel Reyes for discussion and crucial comments on a draft; Ryszard Kostecki, Klaas Landsman, Markus M\"uller, Sam Staton, Andreas Thom and Bas Westerbaan for further discussion; and Tom Leinster for pointing out Isbell's results on codensity. Research at Perimeter Institute is supported by the Government of Canada through Industry Canada and by the Province of Ontario through the Ministry of Economic Development and Innovation. During his time at Perimeter Institute, the second author has been supported by the John Templeton Foundation.}
\begin{document}

\begin{abstract}
Via Gelfand duality, a unital C*-algebra $A$ induces a functor from compact Hausdorff spaces to sets, $\CHs\to\Sets$. We show how this functor encodes standard functional calculus in $A$ as well as its multivariate generalization. Certain sheaf conditions satisfied by this functor provide a further generalization of functional calculus. Considering such sheaves $\CHs\to\Sets$ abstractly, we prove that the \emph{piecewise C*-algebras} of van den Berg and Heunen are equivalent to a full subcategory of the category of sheaves, where a simple additional constraint characterizes the objects in the subcategory. It is open whether  this additional constraint holds automatically, in which case piecewise C*-algebras would be the same as sheaves $\CHs\to\Sets$.

Intuitively, these structures capture the commutative aspects of C*-algebra theory. In order to find a complete reaxiomatization of unital C*-algebras within this language, we introduce \emph{almost C*-algebras} as piecewise C*-algebras equipped with a notion of inner automorphisms in terms of a \emph{self-action}. We provide some evidence for the conjecture that the forgetful functor from unital C*-algebras to almost C*-algebras is fully faithful, and ask whether it is an equivalence of categories. We also develop an analogous notion of \emph{almost group}, and prove that the forgetful functor from groups to almost groups is \emph{not} full.

In terms of quantum physics, our work can be seen as an attempt at a reconstruction of quantum theory from physically meaningful axioms, as realized by Hardy and others in a different framework. Our ideas are inspired by and also provide new input for the topos-theoretic approach to quantum theory.
\end{abstract}

\newgeometry{top=2cm}
\maketitle

\tableofcontents

\restoregeometry
\newpage
\section{Introduction}

C*-algebra theory is a blend of algebra and analysis which turns out to be much more than the sum of its parts, as illustrated by its fundamental results of Gelfand duality and the GNS representation theorem. Nevertheless, the C*-algebra axioms seem somewhat mysterious, and it may not be very clear what they mean or where they actually `come from'. To see the point, consider the axioms of groups for comparison: these have a clear meaning in terms of symmetries and the composition of symmetries, and this provides adequate motivation for these axioms. Do C*-algebras also have an interpretation which motivates their axioms in a similar manner?

A plausible answer to this question would be in terms of applications of C*-algebras to areas outside of pure mathematics. The most evident application of C*-algebras is to quantum mechanics and quantum field theory~\cite{strocchi,landsman,haag}. However, also in this context, the C*-algebra axioms do not seem well-motivated. In fact, not even the multiplication, which results in the algebra structure, does have a clear physical meaning. This is in stark contrast to other physical theories, such as relativity, where the mathematical structures that come up are derived from physical considerations and principles, often via the use of thought experiments. A similar derivation of C*-algebraic quantum mechanics does not seem to be known.

For these reasons, it seems pertinent to try and reformulate the C*-algebra axioms in a more satisfactory manner that would allow for a clear interpretation. This was our motivation for developing the notions of this paper.

For technical convenience, our C*-algebras are assumed unital throughout.

\subsection*{Summary and structure of this paper}

We start the technical development in Section~\ref{Calgsasfunctors} by assigning to every C*-algebra $A\in\Calg$ the functor $\CHs\to\Sets$ induced via the Yoneda embedding and Gelfand duality. It takes a compact Hausdorff space $X$ and maps it to the set of $*$-homomorphisms $C(X)\to A$. In terms of quantum mechanics, this is the set of projective measurements with outcomes in $X$, while the functoriality corresponds to post-processings or coarse-grainings of these measurements. We explain how this functor captures functional calculus for (commuting tuples of) normal elements of $A$, and how this encodes the `commutative aspect' of the structure of $A$. The physical interpretation is in terms of measurements with values in $X$ on the level of objects and post-processings between these on the level of morphisms.

Section~\ref{Calgsheaf} investigates which properties distinguish these functors from arbitrary functors $\CHs\to\Sets$. These properties take the form of sheaf conditions. Starting with the commutative case, we consider sheaf conditions satisfied by all hom-functors $\CHs(W,-):\CHs\to\Sets$. These can be interpreted in a manner similar to a conventional sheaf condition: while we think of the latter as identifying functions on a space with consistent assignments of values to all points, we now identify points with consistent assignments of values to all functions (Lemma~\ref{squarecover}). We then move on to consider sheaf conditions satisfied by the functors $\CHs\to\Sets$ associated to arbitrary $A\in\Calg$. Roughly, the question is how to `guarantee commutativity': under what conditions is a colimit of commutative C*-algebras itself commutative? We introduce \emph{directed cones} as a class of colimits that satisfy this, so that every C*-algebra becomes a functor $\CHs\to\Sets$ that satisfies the sheaf condition on all directed cones. The resulting category of sheaves $\Sh(\CHs)$ does not seem to be a category of sheaves on a (large) site since the directed cones do not form a coverage (Proposition~\ref{nocoverage}). Nevertheless, we show that $\Sh(\CHs)$ is at least locally small (Corollary~\ref{locsmall}) and well-powered (Proposition~\ref{wellpowered}). Furthermore, Lemma~\ref{replem} is a key technical result on the representability of our sheaves, which can be understood as a new characterization of commutative C*-algebras.

In Section~\ref{piecewisesec}, we relate our sheaves $\CHs\to\Sets$ to the \emph{piecewise C*-algebras} of van den Berg and Heunen~\cite{piecewise} (originally called \emph{partial C*-algebras}). The main result is Theorem~\ref{pCalgthm}, which identifies piecewise C*-algebras with a full subcategory of $\Sh(\CHs)$, the objects of which are characterized in terms of a simple additional condition. Since we do not know of any sheaf that would not satisfy this condition, $\Sh(\CHs)$ may even be equivalent to the category of piecewise C*-algebras (Problem~\ref{pCalgprob}).

Section~\ref{secsaC} asks which additional structure a piecewise C*-algebra (or suitable sheaf $\CHs\to\Sets$) could be equipped with such as to recover the noncommutative structure of a C*-algebra as well, i.e.~to obtain an equivalence with the category of C*-algebras. Our proposal is to consider the additional structure of a \emph{self-action}, in the sense of a notion of inner automorphisms: every unitary element should give rise to an automorphism, and these automorphisms should satisfy suitable conditions on commuting unitaries. Introducing such a self-action is motivated by the physical interpretation: it is one of the essential features of quantum mechanics that real-valued observables generate one-parameter families of inner automorphisms, by first exponentiating to a unitary (functional calculus) and then conjugating by that unitary (self-action). In this way, we obtain the category of \emph{almost C*-algebras} $\aCalg$, and we ask whether the forgetful functor $\Calg\to\aCalg$ is an equivalence. While it is clearly faithful, Theorem~\ref{Wfull} shows that it is also full on morphisms out of W*-algebras.

In order to understand better whether the forgetful functor $\aCalg\to\Calg$ could indeed be an equivalence, Section~\ref{secgrps} investigates an analogous question for groups instead of C*-algebras. We ask whether the forgetful functor $\Grp\to\aGrp$ from the category of groups to the category of \emph{almost groups} is an equivalence. Theorem~\ref{aGnotfull} shows that this is not the case, since the functor is not full on morphisms out of a free group.

\subsection*{How about other kinds of operator algebras?}

We expect that many of the ideas developed in this paper apply \emph{mutatis mutandis} to other kinds of operator algebras as well, and in particular to W*-algebras. In this sense, focusing on C*-algebras has been a somewhat arbitrary choice made in the present work. In fact, as indicated by Lemma~\ref{weakvalulem} and especially by Theorem~\ref{Wfull}, the W*-algebra case allows for the derivation of more powerful results than we have been able to prove in the C*-algebra setting. The main reason for us to treat the C*-algebra case in this paper is the greater technical simplicity of topology over measure theory. For example, a W*-algebra version of Gelfand duality in terms of an equivalence of the category of commutative W*-algebras with a suitable category of measurable spaces is not readily available in the literature.

\subsection*{Relation to topos quantum theory} 

The present ideas have commonalities with and were partly inspired by the topos-theoretic approach to quantum physics~\cite{thing,cecilia,hls}. Nevertheless, there are important differences, which may also provide a new direction for topos quantum theory. Crucially, topos quantum theory is formulated in terms of a topos that depends on the particular physical system under consideration, namely the category of presheaves on the poset of commutative subalgebras of the algebra of observables $A$. Instead of working with commutative subalgebras only, we consider \emph{all} $*$-homomorphisms $C(X)\to A$ for \emph{all} commutative C*-algebras $C(X)$. Doing so means that $A$ becomes a functor $\CHs\to\Sets$. In this way, we can consider all physical systems as described on the same footing as objects in the functor category $\Sets^\CHs$ or the category of sheaves $\Sh(\CHs)$.

\subsection*{Notation and terminology}

For us, `C*-algebra' always means `unital C*-algebra'. Likewise, our $*$-homomorphisms are always assumed to be unital, unless noted otherwise (as in the proof of Theorem~\ref{Wfull}). This already applies to the following index of our notation, which lists the conventions for our most commonly used mathematical symbols:

\newcommand{\notation}[2]{#1: & \textrm{\: #2}. \\}
\allowdisplaybreaks

\begin{align*}
\notation{W,X,Y,Z}{compact Hausdorff spaces}
\notation{\mathbf{1},\ldots,\mathbf{4}}{A compact Hausdorff on the corresponding number of points, where we write e.g.~$\mathbf{4}=\{0,1,2,3\}$}
\notation{w,x,y,z}{points in a compact Hausdorff space}
\notation{f,g,h,k}{continuous functions between compact Hausdorff spaces}
\notation{\Box,\bigcirc,\Tl}{unit square, unit disk and unit circle, considered as compact subsets of $\Cl$}
\notation{A,B}{C*-algebras or piecewise C*-algebras (Definition~\ref{piecewisedef})}
\notation{M_n}{The C*-algebra of $n\times n$ matrices with entries in $\Cl$}
\notation{\alpha,\beta,\gamma,\nu,\tau}{normal elements in a C*-algebra, or (more generally) $*$-homomorphisms of the type $C(X)\to A$}
\notation{\zeta}{a $*$-homomorphism or piecewise $*$-homomorphism of the type $A\to B$}
\notation{\mathfrak{a},\mathfrak{b}}{self-action of a piecewise C*-algebra (Definition~\ref{almostdef}) or a piecewise group (Definition~\ref{almostgroupdef})}
\end{align*}

The normal part of a C*-algebra $A$ is
\[
\Cl(A) \defin \{\: \alpha\in A \:|\: \alpha\alpha^* = \alpha^*\alpha \:\}.
\]
We also think of it as the set of `$A$-points' of $\Cl$. More generally, for $A\in\Calg$ and a closed subset $S\subset\Cl$, we also write
\[
S(A) \defin \{\: \alpha\in \Cl(A) \:|\: \spc(\alpha)\subseteq S \:\}
\]
for the set of normal elements with spectrum in $S$, and similarly $S(\zeta):S(A)\to S(B)$ for the resulting action of a $*$-homomorphism $\zeta:A\to B$ on these elements. For example, $\Rl(A)$ denotes the self-adjoint part of a C*-algebra, and similarly $\Tl(A)$ is the unitary group. This sort of notation may be familiar from algebraic geometry, where for a ring $A$, the set of $A$-points of a scheme $S$ is denoted $S(A)$. We also use the standard notation $C(X)$ for the $\Cl$-valued continuous functions on a space $X$. Unfortunately, this is very similar notation despite being different in nature.

We work with the following categories:

\begin{align*}
\notation{\CHs}{compact Hausdorff spaces with continuous maps}
\notation{\CGHs}{compactly generated Hausdorff spaces with continuous maps}
\notation{\Calg}{C*-algebras with $*$-homomorphisms}
\notation{\cCalg}{commutative C*-algebras with $*$-homomorphisms}
\notation{\pCalg}{piecewise C*-algebras (Definition~\ref{piecewisedef}) with piecewise $*$-homomorphisms (Definition~\ref{phomdef})}
\notation{\aCalg}{almost C*-algebras (Definition~\ref{almostdef}) with almost $*$-homomorphisms (Definition~\ref{ahomdef})}
\notation{\Grp}{groups with group homomorphisms}
\notation{\pGrp}{piecewise groups (Definition~\ref{piecewisegroupdef}) with piecewise group homomorphisms (Definition~\ref{pghomdef})}
\notation{\aGrp}{almost groups (Definition~\ref{almostgroupdef}) with almost group homomorphisms (Definition~\ref{aghomdef})}
\end{align*}

Throughout, all diagrams are commutative diagrams, unless explicitly stated otherwise.

\newpage

\section{C*-algebras as functors $\CHs\to\Sets$}
\label{Calgsasfunctors}

In this section, we explain how to regard a C*-algebra as a functor $\CHs\to\Sets$, and how this encodes the usual functional calculus for normal elements in a C*-algebra, as well as its multivariate generalization.

The Yoneda embedding realizes a C*-algebra $A$ as the hom-functor
\[
\Calg(-,A) \: :\: \Calg^\op \to \Sets.
\]
We are interested in studying this hom-functor on the commutative C*-algebras, meaning that we consider its restriction to a functor $\cCalg^\op\to\Sets$. Applying Gelfand duality, we can equivalently consider it as a functor
\[
-(A) \: :\: \CHs \to \Sets,
\]
assigning to every compact Hausdorff space $X\in\CHs$ a set $X(A)$, which is the set of all $*$-homomorphisms $C(X)\to A$. Our notation $X(A)$ suggests thinking of it as the set of \emph{generalized $A$-points} of $X$.

\begin{example}
If $X$ is finite, a $*$-homomorphism $C(X)\to A$ or generalized $A$-point in $X$ corresponds to a \emph{partition of unity} in $A$ indexed by $X$, i.e.~a family of pairwise orthogonal projections summing up to $1$.
\end{example}

\begin{example}
If $A$ is a W*-algebra, the spectral theorem~\cite[Theorem~1.44]{folland} implies that $X(A)$ is precisely the collection of all regular projection-valued measures on $X$ with values in $A$.
\end{example}

\begin{remark}
\label{measurements}
In terms of algebraic quantum mechanics, where a physical system is described by a C*-algebra $A$ of observables~\cite{strocchi,landsman}, we interpret a $*$-homomorphism $\alpha : C(X)\to A$ as a projective measurement with values in $X$, described in the Heisenberg picture. So the physical meaning of our $X(A)$ is as the collection of all measurements with outcomes in the space $X$.
\end{remark}

\begin{remark}
Those $*$-homomorphisms $C(X)\to A$ whose image is in the center of $A$ are called \emph{$C(X)$-algebras}, and they correspond exactly to upper semicontinuous C*-bundles over $X$~\cite{nilsen}\footnote{We thank Klaas Landsman for pointing this out to us.}.
\end{remark}

At the level of morphisms, every $f:X\to Y$ acts by composing a $*$-homomorphism $\alpha : C(X)\to A$ with $C(f)$ to $\alpha\circ C(f) : C(Y)\to A$, so that
\begin{align}
\begin{split}
\label{faction}
f(A) :  X(A) 	& \longrightarrow Y(A) \\
	\alpha 	& \longmapsto \alpha\circ C(f)
\end{split}
\end{align}
is the action of $f$ on generalized $A$-points.

\begin{remark}
\label{postproc}
The physical interpretation of $f(A)$ is as a \emph{post-processing} or \emph{coarse-graining} of measurements. Under $f(A)$, a measurement $\alpha : C(X)\to A$ with values in $X$ becomes a measurement $\alpha\circ C(f):C(Y)\to A$ with values in $Y$, implemented by first conducting the original measurement $\alpha$ and then processing the outcome via application of the function $f$. Since we work in the Heisenberg picture, the order of composition is reversed, so that $C(f)$ happens first.
\end{remark}

This construction is also functorial in $A$: for any $*$-homomorphism $\zeta:A\to B$ and $X\in\CHs$, we have $X(\zeta) : X(A) \to X(B)$. Furthermore, for any $f:X\to Y$ there is the evident naturality diagram
\[
\xymatrix{	X(A) \ar[d]_{X(\zeta)} \ar[r]^{f(A)}	& Y(A) \ar[d]^{Y(\zeta)}	\\
		X(B) \ar[r]_{f(B)}			& Y(B)				}
\]
which expresses the bifunctoriality of the hom-functor $\Calg(-,-)$ in our setup.

Before proceeding with technical developments in the next section, it is worthwhile pondering how these considerations relate to functional calculus.

\subsection*{Functoriality captures the `commutative part' of the C*-algebra structure}
\label{funcalc}

In a somewhat informal sense, the functor $-(A)$ captures the entire `commutative part' of the structure of a C*-algebra $A$. We will obtain a precise result along these lines as Theorem~\ref{pCalgthm}. Here, we perform some simple preparations.

\begin{lemma}
For any compact set $S\subseteq\Cl$, evaluating an $\alpha : C(S)\to A$ on $\id_S : S\to\Cl$,
\be
\label{evalid}
\alpha\longmapsto\alpha(\id_S),
\ee
is a bijection between $S(A)$ and the normal elements of $A$ with spectrum in $S$.
\label{normalcorrespond}
\end{lemma}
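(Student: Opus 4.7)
The plan is to exhibit an inverse map via continuous functional calculus; this amounts to repackaging the spectral theorem for a single normal element in Gelfand-dual language.

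First I would check the map is well-defined, i.e.\ that $\alpha(\id_S)$ is normal with spectrum in $S$. Normality is immediate: $\id_S$ is normal in $C(S)$ (every element of the commutative algebra $C(S)$ is), and any $*$-homomorphism preserves the relation $\alpha\alpha^* = \alpha^*\alpha$. For the spectrum, a unital $*$-homomorphism between unital C*-algebras cannot enlarge the spectrum, so $\spc(\alpha(\id_S)) \subseteq \spc(\id_S) = S$.

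For injectivity, I would invoke Stone--Weierstrass: the unital $*$-subalgebra of $C(S)$ generated by $\id_S$ (i.e.\ polynomials in $\id_S$ and $\overline{\id_S}$) separates points of $S$ and hence is dense in $C(S)$. Since any $*$-homomorphism $C(S)\to A$ is automatically continuous, its value on $\id_S$ determines it on this dense subalgebra and therefore everywhere.

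For surjectivity, given a normal $a\in A$ with $\spc(a)\subseteq S$, the continuous functional calculus supplies a $*$-homomorphism
\[
\phi_a : C(\spc(a)) \longrightarrow A, \qquad \phi_a(\id_{\spc(a)}) = a.
\]
The closed inclusion $\spc(a)\hookrightarrow S$ gives, via Gelfand duality (or directly by restriction), a surjective unital $*$-homomorphism $r : C(S)\to C(\spc(a))$ sending $\id_S$ to $\id_{\spc(a)}$. Then $\phi_a \circ r : C(S)\to A$ is an element of $S(A)$ whose image under \eqref{evalid} is exactly $a$.

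There is no real obstacle here: the lemma is essentially the content of standard functional calculus, and the main point is just to line up the Gelfand-dual notation. The only step that needs a small remark is that the restriction map $r$ is the Gelfand dual of the inclusion $\spc(a)\hookrightarrow S$, which is how one knows $r(\id_S) = \id_{\spc(a)}$.
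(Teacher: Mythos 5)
Your proof is correct and follows essentially the same route as the paper's: Stone--Weierstrass plus automatic continuity for injectivity, and functional calculus for surjectivity (your factorization through $C(\spc(a))$ via the restriction map is just an explicit unpacking of what the paper summarizes as "applying functional calculus to a given normal element with spectrum in $S$"). The added well-definedness check (normality and spectral non-enlargement under unital $*$-homomorphisms) is a harmless, correct supplement that the paper leaves implicit.
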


\begin{proof}
If $\alpha,\beta:C(S)\to A$ coincide on $\id_S$, then they must coincide on the *-algebra generated by $\id_S$. Since $\id_S$ separates points, this *-algebra is dense in $C(S)$ by the Stone-Weierstrass theorem, so that $\alpha=\beta$ by continuity. This establishes injectivity of~\eqref{evalid}.

Concerning surjectivity, applying functional calculus to a given normal element with spectrum in $S$ results in a $*$-homomorphism $C(S)\to A$ which realizes the given element via~\eqref{evalid}.
\end{proof}

Due to this correspondence, we will not distinguish notationally between a $*$-homomorphism $\alpha : C(S)\to A$ and its associated normal element, i.e.~we also denote the latter simply by $\alpha\in A$. Moreover, we can also think of a $*$-homomorphism $C(X)\to A$ for arbitrary $X\in\CHs$ as a sort of `generalized normal element' of $A$.

For any two compact $S,T\subseteq\Cl$ and $f:S\to T$, functional calculus---in the sense of applying $f$ to normal elements with spectrum in $S$---is encoded in two ways:
\begin{itemize}
\item in evaluating an $\alpha : C(S)\to A$ on $f:S\to\Cl$, as in the proof of Lemma~\ref{normalcorrespond};
\item in the functoriality $f(A) : S(A)\to T(A)$, since applying this functorial action to $\alpha$ results in the same normal element of $A$,
\be
\label{fundrel}
f(A)(\alpha)(\id_T) \stackrel{\eqref{faction}}{=} (\alpha\circ C(f))(\id_T) = \alpha(C(f)(\id_T)) = \alpha(\id_T\circ f) = \alpha(f).
\ee
\end{itemize}
From now on, what we mean by `functional calculus' is the functoriality, i.e.~the second formulation.

Writing $\bigcirc\subseteq\Cl$ for the unit disk, the normal elements of norm $\leq 1$ are identified with the $*$-homomorphisms $\alpha : C(\bigcirc)\to A$. For every $r\in [0,1]$, we have the multiplication map $r\cdot : \bigcirc\to\bigcirc$, so that $(r\cdot)(A) : \bigcirc(A) \to \bigcirc(A)$ represents scalar multiplication of normal elements by $r$. Based on this, we can recover the norm of a normal element $\alpha\in\bigcirc(A)$ as the largest $r$ for which $\alpha$ factors through $C(r)$,
\[
||\alpha|| = \max\: \{\: r\in[0,1] \:|\: \alpha\in\im((r\cdot)(A)) \:\}\:.
\]

As we will see next, the functoriality also captures part of the binary operations of a C*-algebra.

\begin{lemma}
\label{pairscorrespond}
For $S,T\subseteq\Cl$, applying functoriality to the product projections 
\be
\label{prodprojs}
p_S \: : \: S\times T\longrightarrow S,\qquad p_T \: : \: S\times T\longrightarrow T
\ee
establishes a bijection between $(S\times T)(A)$ and pairs of \emph{commuting} normal elements $(\alpha,\beta)\in A\times A$ with $\spc(\alpha)\subseteq S$ and $\spc(\beta)\subseteq T$.
\end{lemma}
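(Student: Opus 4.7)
My plan is to show bijectivity in the two directions separately after fixing the map. Define
\[
\Phi \: : \: (S\times T)(A) \longrightarrow \Cl(A)\times\Cl(A), \qquad \gamma\longmapsto (p_S(A)(\gamma),\: p_T(A)(\gamma)),
\]
and observe via Lemma~\ref{normalcorrespond} and the identity~\eqref{fundrel} that this corresponds concretely to the pair of normal elements $(\gamma(p_S),\gamma(p_T))$ obtained by evaluating $\gamma$ on the coordinate projections. Since $p_S$ and $p_T$ have spectra $S$ and $T$ respectively, the components of $\Phi(\gamma)$ lie in $S(A)$ and $T(A)$; moreover they commute, because both lie in the image $\gamma(C(S\times T))$, which is a commutative $*$-subalgebra of $A$. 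Thus $\Phi$ lands in the set of commuting pairs described in the statement.

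For injectivity, I mimic the argument of Lemma~\ref{normalcorrespond}. If two $*$-homomorphisms $\gamma_1,\gamma_2 : C(S\times T)\to A$ agree on $p_S$ and $p_T$, then they agree on the unital $*$-subalgebra of $C(S\times T)$ generated by $p_S$ and $p_T$. Since $\{p_S,p_T\}$ separates points of $S\times T$, the Stone--Weierstrass theorem says this $*$-subalgebra is dense in $C(S\times T)$, so $\gamma_1=\gamma_2$ by continuity.

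For surjectivity, I need to build a $\gamma : C(S\times T)\to A$ from a given commuting pair $(\alpha,\beta)$ of normal elements with $\spc(\alpha)\subseteq S$ and $\spc(\beta)\subseteq T$. The key input is the multivariate continuous functional calculus for commuting normals: let $B\subseteq A$ be the unital C*-subalgebra generated by $\alpha$ and $\beta$. By Fuglede's theorem, $\beta$ commutes not only with $\alpha$ but also with $\alpha^*$, and symmetrically for $\alpha$ and $\beta^*$; together with the fact that $\alpha,\beta$ are individually normal, this shows that $B$ is commutative. By Gelfand duality, $B\cong C(Z)$ for a compact Hausdorff $Z$, and the evaluation map
\[
Z \longrightarrow S\times T, \qquad \chi\longmapsto (\chi(\alpha),\chi(\beta)),
\]
is continuous and well-defined since its image lies in $\spc(\alpha)\times\spc(\beta)\subseteq S\times T$. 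Pulling back continuous functions along this map yields $C(S\times T)\to C(Z)\cong B\hookrightarrow A$, and by construction this $\gamma$ sends $p_S\mapsto \alpha$ and $p_T\mapsto\beta$, so $\Phi(\gamma)=(\alpha,\beta)$.

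The main obstacle is the surjectivity step, and within it the assertion that $B$ is commutative. The subtle point is that a priori the commutativity of $\alpha$ with $\beta$ does not imply commutativity of $\alpha^*$ with $\beta$; Fuglede's theorem is exactly what is needed to bridge this gap and legitimately invoke Gelfand duality for $B$. Once this is in place, the construction of $\gamma$ via pullback along the joint-spectrum embedding is standard, and the verification that $\gamma(p_S)=\alpha$ and $\gamma(p_T)=\beta$ is immediate from the definition of the Gelfand transform.
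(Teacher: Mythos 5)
Your proof is correct and follows the same overall route as the paper: injectivity via Stone--Weierstrass and separation of points by $p_S,p_T$ is identical, and surjectivity goes through Gelfand duality for a commutative subalgebra containing the given pair. The difference is in how that subalgebra is used: the paper phrases the image as pairs of $*$-homomorphisms $\alpha:C(S)\to A$, $\beta:C(T)\to A$ with \emph{commuting ranges}, embeds both ranges in some commutative $C(X)\subseteq A$, and then appeals to the universal property of the product $S\times T$ in $\CHs$, i.e.\ the bijection $\CHs(X,S\times T)\cong\CHs(X,S)\times\CHs(X,T)$, to produce the preimage; you instead build $\gamma$ directly as the pullback along the joint-spectrum map $Z\to S\times T$ obtained from the Gelfand transform of the C*-subalgebra $B$ generated by $\alpha$ and $\beta$. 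These are the same construction in substance, but your explicit appeal to Fuglede's theorem is a genuine point in your favour: the lemma speaks of commuting normal \emph{elements}, while the paper's proof silently works with commuting \emph{ranges}, and bridging that gap (commutation of $\alpha$ with $\beta$ forcing commutation with $\beta^*$, hence commutativity of $B$) is exactly the Fuglede step you isolate. One small point you use implicitly: identifying the image of your evaluation map with $\spc(\alpha)\times\spc(\beta)$ (or at least placing it inside $S\times T$) relies on spectral permanence for unital C*-subalgebras, which is standard but deserves a word.
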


This generalizes Lemma~\ref{normalcorrespond} to commuting pairs of normal elements. Of course, there are analogous statements for tuples of any size (finite or even infinite), and this encodes multivariate functional calculus.

\begin{proof}
We need to show that the map
\[
(p_S(A),p_T(A)) \: : \: (S\times T)(A) \longrightarrow S(A)\times T(A)
\]
is injective, and that its image consists of precisely the pairs $(\alpha,\beta)$ with $\alpha:C(S)\to A$ and $\beta:C(T)\to A$ that have commuting ranges. Injectivity holds because $p_S : S\times T\to\Cl$ and $p_T:S\times T\to\Cl$ separate points, so that the same argument as in the proof of Lemma~\ref{normalcorrespond} applies. For surjectivity, let $\alpha$ and $\beta$ be given. Since their ranges commute, we can find a commutative subalgebra $C(X)\subseteq A$ that contains both, so that the pair $(\alpha,\beta)$ has a preimage in the upper right corner of the diagram
\[
\xymatrix{	(S\times T)(C(X)) \ar[r] \ar[d]	& S(C(X))\times T(C(X)) \ar[d]	\\
		(S\times T)(A) \ar[r]		& S(A)\times T(A)		}
\]
Now the upper row is equal to the canonical map $\CHs(X,S\times T)\to \CHs(X,S)\times\CHs(X,T)$, which is a bijection due to the universal property of $S\times T$. Hence we can find a preimage of $(\alpha,\beta)$ also in the upper left corner, and then also in the lower left corner by commutativity of the diagram.
\end{proof}

\begin{remark}
In the physical interpretation, the elements of $(S\times T)(A)$ are measurements that have outcomes in $S\times T$ (Remark~\ref{measurements}). Lemma~\ref{pairscorrespond} now shows that such a measurement corresponds to a pair of \emph{compatible} measurements taking values in $S$ and $T$, respectively, and one obtains these measurements by coarse-graining along the product projections~\eqref{prodprojs}, i.e.~by forgetting the other outcome.
\end{remark}

As part of bivariate functional calculus, we can now consider the addition map
\be
\label{addition}
S\times T \longrightarrow S + T,\qquad (x,y)\longmapsto x + y,
\ee
where $S+T$ is the Minkowski sum
\[
S + T = \{\: x + y \:|\: x\in S,\: y\in T\:\},
\]
again considered as a compact subset of $\Cl$. Under the identifications of Lemmas~\ref{normalcorrespond} and~\ref{pairscorrespond}, the addition map
\be
\label{additionA}
+(A) \: :\: (S\times T)(A) \longrightarrow (S + T)(A).
\ee
takes a pair of commuting normal elements with spectra in $S$ and $T$ and takes it to a normal element with spectrum in $S+T$.

\begin{lemma}
On commuting normal elements, this recovers the usual addition in $A$.
\end{lemma}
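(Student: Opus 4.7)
The plan is to unwind both identifications explicitly and show that the functorial addition reduces to the algebraic one via linearity of $*$-homomorphisms. Let a pair of commuting normal elements $a,b\in A$ with $\spc(a)\subseteq S$ and $\spc(b)\subseteq T$ be given, and let $\alpha:C(S)\to A$, $\beta:C(T)\to A$ be the corresponding $*$-homomorphisms from Lemma~\ref{normalcorrespond}, so $\alpha(\id_S)=a$ and $\beta(\id_T)=b$. By Lemma~\ref{pairscorrespond}, there is a unique $\gamma:C(S\times T)\to A$ with $\gamma\circ C(p_S)=\alpha$ and $\gamma\circ C(p_T)=\beta$, i.e.\ $\gamma(p_S)=a$ and $\gamma(p_T)=b$.

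The first step is to compute what $+(A)(\gamma)$ evaluates to as a normal element. Using~\eqref{fundrel} for the addition map $+:S\times T\to S+T$, one finds
\[
+(A)(\gamma)(\id_{S+T}) \;=\; \gamma(\id_{S+T}\circ +) \;=\; \gamma(+),
\]
where on the right $+$ is interpreted as the continuous function $S\times T\to\Cl$ sending $(x,y)\mapsto x+y$.

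The second, and essentially only substantive, step is the pointwise identity $+ = p_S+p_T$ in $C(S\times T)$, where the sum on the right is the $\Cl$-valued pointwise sum of the two coordinate projections. Applying $\gamma$ and using that $\gamma$ is a (linear) $*$-homomorphism yields
\[
\gamma(+) \;=\; \gamma(p_S) + \gamma(p_T) \;=\; a + b.
\]
Combining with the previous display, $+(A)(\gamma)$ corresponds under Lemma~\ref{normalcorrespond} to $a+b$, which is what had to be shown.

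There is no real obstacle here: the proof is a direct computation. The only subtlety worth being explicit about is the distinction between the two roles of the symbol $+$, namely as the continuous map $S\times T\to S+T$ used in~\eqref{addition} and as the addition operation in $C(S\times T)$ or in $A$; once these are disentangled, linearity of $\gamma$ delivers the conclusion.
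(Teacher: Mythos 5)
Your proof is correct and follows essentially the same route as the paper's: both compute $+(A)(\gamma)$ on $\id_{S+T}$, use the pointwise identity $\id_{S+T}\circ{+}=\id_S\circ p_S+\id_T\circ p_T$ (your $+=p_S+p_T$ in $C(S\times T)$), and then invoke additivity of the $*$-homomorphism $\gamma$ to split the sum. The explicit disentangling of the two roles of the symbol $+$ is exactly the point the paper flags as the crucial step, so nothing is missing.
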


\begin{proof}
By Lemma~\ref{pairscorrespond}, it is enough to take a $\gamma\in (S\times T)(A)$ and to compute the resulting normal element that one obtains by applying $+(A)$ in a manner analogous to~\eqref{fundrel},
\begin{align*}
(+(A))(\gamma)(\id_{S+T}) 	& \stackrel{\eqref{faction}}{=} (\gamma \circ C(+))(\id_{S+T}) = \gamma(\id_{S+T}\circ +) \\
				& = \gamma(\id_S \circ p_S + \id_T\circ p_T) \\
				& = \gamma(\id_S\circ p_S) + \gamma(\id_T\circ p_T) \\
				& = (\gamma\circ C(p_S))(\id_S) + (\gamma\circ C(p_T))(\id_T) \\
				& \stackrel{\eqref{faction}}{=} (p_S(A))(\gamma)(\id_S) + (p_T(A))(\gamma)(\id_T),
\end{align*}
where the crucial assumption of additivity of $\gamma$ has been used to obtain the expression in the third line.
\end{proof}

In the analogous manner, one can show that the multiplication map
\be
\label{multiplication}
S\times T \longrightarrow ST,\qquad (x,y)\longmapsto xy.
\ee
lets us recover the product of two commuting normal elements in $A$. More generally, we can recover any polynomial or continuous function of any number of commuting normal elements.

In summary, we think of the functor $-(A):\CHs\to\Sets$ associated to $A\in\Calg$ as a generalization of functional calculus, which remembers the entire `commutative structure' of $A$. The generalization is from applying functions to individual normal elements---as in the conventional picture of functional calculus---to applying functions to `generalized' normal elements in the guise of $*$-homomorphisms of the form $C(X)\to A$. In particular, the C*-algebra operations acting on commuting normal elements are encoded in the functoriality. In the remainder of this paper, we will always have this point of view in mind, together with its physical interpretation:
\[
\text{functoriality = generalized functional calculus = post-processing of measurements.}
\]

\begin{remark}
\label{reconstruct}
In Section~\ref{guarcommsec}, we will also consider functors $F:\CHs\to\Sets$ that do not necessarily arise from a C*-algebra in this way. In terms of the physical interpretation, this means that we attempt to model physical systems not in terms of their algebras of observables as the primary structure, but in terms of a functor $F$ as the most fundamental structure that describes physics. This is motivated by the fact that the C*-algebra structure of the observables is (a priori) not physically well-motivated, as discussed in the introduction. Thanks to Remarks~\ref{measurements} and~\ref{postproc}, our functors $F:\CHs\to\Sets$ do have a meaningful operational interpretation in terms of measurements: $F(X)$ is the set of (projective) measurements with outcomes in $X$, and the action of $F$ on morphisms is the post-processing. This bare-bones structure turns out to carry a surprising amount of information about the algebra of observables. We will try to equip $F$ with additional properties and structure such as to uniquely specify the algebra of observables.

In spirit, this approach is similar to the existing reconstructions of quantum mechanics from operational axioms~\cite{grinbaum}. In recent years, a wide range of reconstruction theorems with a large variety of choices for the axioms have been derived, as pioneered by Hardy~\cite{hardy1,hardy2}. In these theorems, `quantum mechanics' refers to the Hilbert space formulation in finite dimensions, and the reconstruction theorems recover the Hilbert space structure within the framework of general probabilistic theories. In contrast to this, our work focuses on the C*-algebraic formulation of quantum mechanics and is not limited to a finite-dimensional setting. Also, we do not make use of the possibility of taking stochastic mixtures: since we are (currently) only dealing with projective measurements, taking stochastic mixtures is not even possible in our setup.
\end{remark}

\newpage
\section{C*-algebras as sheaves $\CHs\to\Sets$}
\label{Calgsheaf}

Functional calculus lets us apply functions to operators, or more generally to $*$-homomorphisms $C(X)\to A$ as in the previous section. In some situations, one can also go the other way: for certain families of functions $\{f_i : X\to Y_i\}_{i\in I}$ with common domain, a collection of $*$-homomorphisms $\{\beta_i : C(Y_i)\to A\}_{i\in I}$ arises from a unique $*$-homomorphism $\alpha : C(X)\to A$ by functoriality along the $f_i$ if and only if the $\beta_i$ satisfy a simple compatibility requirement. This property is a \emph{sheaf condition}, and it turns our functors $-(A)$ into sheaves on the category $\CHs$.

\begin{remark}
We emphasize already at this point that the sheaf conditions that we consider do not arise from a Grothendieck topology (on $\CHs^\op$), since the axiom of stability under pullback fails to hold. Also, while sheaf conditions are typically formulated for contravariant functors (i.e.~presheaves), our sheaves live in a covariant setting. While we could speak of `cosheaves' to emphasize this distinction, this term usually refers to dualizing the standard notion of sheaf on the codomain category, while we dualize on the domain category.
\end{remark}

A good way of talking about sheaf conditions on large categories is not in terms of sieves or cosieves---which would usually have to be large---but in terms of cocones or cones~\cite{shulman}:

\begin{definition}
A \emph{cone} in $\CHs$ is any small family of morphisms $\{f_i :X\to Y_i\}_{i\in I}$ with common domain.
\end{definition}

\begin{definition}
A functor $F:\CHs\to\Sets$ satisfies the \emph{sheaf condition} on a cone $\{f_i : X\to Y_i\}_{i\in I}$ if the $F(f_i)$ implement a bijection between the sections $\alpha\in F(X)$ and the families of sections $\{\beta_i\}_{i\in I}$ with $\beta_i\in F(Y_i)$ that are \emph{compatible} in the following sense: for any $i,j\in I$ and any diagram
\be
\begin{split}
\label{compdiag}
\xymatrix{	X \ar[r]^{f_i} \ar[d]_{f_j}	& Y_i \ar[d]^g	\\
		Y_j \ar[r]_h			& Z		}
\end{split}
\ee
we have $F(g)(\beta_i) = F(h)(\beta_j)$.
\end{definition}

Since $\CHs$ has pushouts, the compatibility condition holds if and only if it holds on every pushout diagram
\[
\xymatrix{	X \ar[r]^{f_i} \ar[d]_{f_j}	& Y_i \ar[d]			\\
		Y_j \ar[r]			& Y_i\pushout{f_i}{f_j} Y_j	}
\]
Hence the sheaf condition holds on $\{f_i\}$ if and only if the diagram
\[
\xymatrix{	F(X) \ar[r]	& \mathlarger{\mathlarger{\prod}}_{i\in I} F(Y_i) \ar@<1ex>[r] \ar@<-1ex>[r]	& \mathlarger{\mathlarger{\prod}}_{i,j\in I} F(Y_i\pushout{f_i}{f_j} Y_j). }
\]
is an equalizer in $\Sets$, where the arrows are the canonical ones~\cite[p.~123]{MM}. At times it is convenient to apply the compatibility condition as in~\eqref{compdiag} instead of considering the pushout, while at other times it is necessary to work with the pushout explicitly.

\subsection*{Effective-monic cones in $\CHs$}

Since we are interested in sheaf conditions satisfied by a functor of the form $-(A):\CHs\to\Sets$ for $A\in\Calg$, it makes sense to consider the commutative case first. Then our functor takes the form $-(C(W))$, which is isomorphic to the hom-functor $\CHs(W,-)$.

\begin{definition}[{e.g.~\cite[Definition~2.22]{shulman}}]
\label{effmondef}
A cone $\{f_i:X\to Y_i\}_{i\in Y}$ in $\CHs$ is \emph{effective-monic} if every representable functor $\CHs(W,-)$ satisfies the sheaf condition on it.
\end{definition}

Hence $\{f_i\}$ is effective-monic if and only if $X$ is the equalizer in the diagram
\[
	\xymatrix{	X \ar[r]	& \mathop{\mathlarger{\mathlarger{\prod}}_{i\in I}} Y_i \ar@<1ex>[r] \ar@<-1ex>[r]	& \mathlarger{\mathlarger{\prod}}_{i,j\in I} (Y_i\pushout{f_i}{f_j}Y_j), }
\]
or equivalently the limit in the diagram
\be
\begin{split}
\label{pointssheaf}
\xymatrix{					&& Y_i \ar[dr]	& \vdots			\\
		X \ar[drr]_{f_j} \ar[urr]^{f_i}	&& \vdots	& Y_i\pushout{f_i}{f_j}Y_j	\\
						&& Y_j \ar[ur]	& \vdots			}
\end{split}
\ee

\begin{example}
\label{limit}
Let $\Lambda$ be a small category and $L:\Lambda\to\CHs$ a functor of which we consider the limit $\lim_\Lambda L\in\CHs$. The limit projections $p_\lambda : \lim_\Lambda L \to L(\lambda)$ assemble into a cone $\{p_\lambda\}_{\lambda\in\Lambda}$, which is effective-monic. 
\end{example}

Fortunately, it is not necessary to consider arbitrary $W$ in Definition~\ref{effmondef}:

\begin{lemma}
A cone $\{f_i\}$ is effective-monic if and only if $\CHs(\mathbf{1},-)$ satisfies the sheaf condition on it.
\label{pointsonly}
\end{lemma}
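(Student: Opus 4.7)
The forward implication is immediate from Definition~\ref{effmondef}, since $\mathbf{1}\in\CHs$. For the converse, the plan is to compare $X$ with the limit of the diagram~\eqref{pointssheaf} computed inside $\CHs$ itself, and then to upgrade a set-level bijection to a homeomorphism by exploiting compactness and Hausdorffness.

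Concretely, I would first form the limit $E\in\CHs$ of the diagram~\eqref{pointssheaf}. This limit exists because $\CHs$ has all small products (by Tychonoff) and all equalizers (realized as closed subspaces of products, the closedness following from Hausdorffness of the codomain), and hence all small limits. The universal property of $E$ produces a canonical continuous comparison map $\iota : X \to E$ through which all the $f_i$ factor, and by the Yoneda lemma the effective-monic condition is equivalent to the assertion that $\iota$ is an isomorphism in $\CHs$.

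Next I would argue that the hypothesis forces $\iota$ to be a bijection on underlying sets. The functor $\CHs(\mathbf{1},-)$ is naturally isomorphic to the forgetful functor $U:\CHs\to\Sets$, which preserves limits (being a representable, and also because limits in $\CHs$ are created from their underlying-set counterparts equipped with the subspace topology of the relevant product). Hence the underlying set of $E$ is the equalizer in $\Sets$ of the corresponding diagram, and the hypothesis that $\CHs(\mathbf{1},-)$ satisfies the sheaf condition on $\{f_i\}$ is precisely the statement that $U(\iota)$ is a bijection.

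The final step is the only nonformal one: a continuous bijection from a compact space to a Hausdorff space is automatically a homeomorphism, so $\iota$ itself is an isomorphism in $\CHs$, completing the proof. The main point to watch is exactly this compact-to-Hausdorff promotion, which is the categorical reason that restricting the test object to $\mathbf{1}$ already suffices; everything else is categorical bookkeeping about how limits are formed in $\CHs$ and translated through the Yoneda embedding.
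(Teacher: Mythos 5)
Your proof is correct, but it takes a different route from the paper. The paper disposes of the converse in one line: $\CHs$ is monadic over $\Sets$ (Manes' theorem, via the ultrafilter monad), the forgetful functor is exactly $\CHs(\mathbf{1},-)$, and monadic functors create limits, so the set-level limit statement lifts to $\CHs$ and $X$ is the limit of~\eqref{pointssheaf}. You instead avoid monadicity entirely: you use only that $\CHs$ is complete (products by Tychonoff, equalizers as closed subspaces), that the representable $\CHs(\mathbf{1},-)$ \emph{preserves} limits, and then you promote the resulting continuous bijection $\iota:X\to E$ to a homeomorphism by the compact-to-Hausdorff argument. In effect you replace ``creates limits'' by ``preserves limits and reflects isomorphisms,'' with the reflection of isomorphisms supplied by elementary point-set topology; your identification of effective-monicity with $\iota$ being invertible, via Yoneda and the equalizer reformulation of the sheaf condition given just before Definition~\ref{effmondef}, is also sound. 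What the paper's approach buys is brevity and a citation-level argument; what yours buys is self-containedness, since the nontrivial input (monadicity of $\CHs$ over $\Sets$) is swapped for the standard facts that $\CHs$ has small limits and that a continuous bijection from a compact space to a Hausdorff space is a homeomorphism. Both arguments establish exactly the same statement, so no gap remains.
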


\begin{proof}
$\CHs$ is well-known to be monadic over $\Sets$, with the forgetful functor being precisely the functor of points $\CHs(\mathbf{1},-) : \CHs\to\Sets$. In particular, this functor creates limits.
\end{proof}

So in words, $X$ must be the subspace of the product space $\prod_{i\in I} Y_i$ consisting of all those families of points $\{y_i\}_{i\in I}$ such that the image of $y_i\in Y_i$ coincides with the image of $y_j\in Y_j$ in the pushout space $Y_i\pushout{f_i}{f_j}Y_j$. This condition also applies for $j=i$, in which case it is equivalent to $y_i\in\im(f_i)$.

\begin{remark}
For a given $Y$, the cone of all functions $\{f:X\to Y\}_{f:X\to Y}$ is effective-monic for every $X$ if and only if $Y$ is codense.
\label{codense}
\end{remark}

While these categorical considerations have been extremely general, we now get into the specifics of $\CHs$. We write $\Box\defin[0,1]\times[0,1]$ for the unit square, and consider it as embedded in $\Box\subseteq\Rl^2=\Cl$, where the unit interval $[0,1]\subseteq\Rl$ is an edge of $\Box$.

\begin{lemma}
\label{squarecover}
For every $X\in\CHs$, the cone $\{f:X\to\Box\}_{f:X\to\Box}$ consisting of all functions $f:X\to\Box$ is effective-monic.
\end{lemma}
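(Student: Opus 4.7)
By Lemma~\ref{pointsonly}, it suffices to verify the sheaf condition for $\CHs(\mathbf{1},-)$ on the cone. Concretely, I need to show that the canonical evaluation map $\eta_X\colon X \to \prod_{f\colon X\to\Box}\Box$, $x\mapsto (f(x))_f$, is a bijection onto the set of compatible families. Injectivity is immediate from Urysohn's lemma, since $[0,1]\hookrightarrow\Box$ along an edge of the square and continuous $[0,1]$-valued maps separate points of any compact Hausdorff space.

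For surjectivity, let $(y_f)$ be a compatible family. I first extract two structural identities using pushout compatibility with a single map $f\colon X\to\Box$. Taking $f_i=f_j=f$ and choosing $g,h\colon\Box\to Z$ that agree on $f(X)$ but differ at a prescribed point outside $f(X)$ yields $y_f\in f(X)$. Applying compatibility instead to the pair $(f,\psi\circ f)$ with $g=\psi$ and $h=\id_\Box$ yields the functoriality identity $y_{\psi\circ f}=\psi(y_f)$ for every continuous $\psi\colon\Box\to\Box$.

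Next I reduce to the codensity of $[0,1]$ in $\CHs$, a classical result of Isbell. Since $[0,1]$ is closed in $\Box$, Tietze extends every pair $g,h\colon[0,1]\to Z$ from any compatibility diagram to maps $\Box\to Z$, so the given compatible family restricts to one on the subcone $\{f\colon X\to[0,1]\}$. Codensity then provides a unique $x\in X$ satisfying $g(x)=y_g$ for all $g\colon X\to[0,1]$. To promote this to arbitrary $f\colon X\to\Box$, I apply the functoriality identity with any continuous $\psi\colon\Box\to[0,1]$: $\psi(f(x))=(\psi\circ f)(x)=y_{\psi\circ f}=\psi(y_f)$. Since $C(\Box,[0,1])$ separates points of $\Box$ by Urysohn, this forces $f(x)=y_f$.

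The main obstacle is invoking (or reproving) Isbell's codensity result. A fully self-contained proof would need to extract, from the pairwise pushout compatibility on the $[0,1]$-valued subcone, enough algebraic structure of $C(X,\Rl)$ (essentially the $\Rl$-linearity and multiplicativity relations $y_{f+g}=y_f+y_g$ and $y_{fg}=y_fy_g$, where defined) to apply Gelfand--Naimark and recover $X$ as the character space. These identities do not follow in an obvious way from pairwise compatibility alone, since a naive attempt to compare $f+g$ with $f$ in isolation fails to enforce any relation between $y_{f+g}$ and $y_g$. Handling this would likely require constructing auxiliary maps $X\to\Box$ that encode finite joint images $(f_1,\ldots,f_n)(X)\subseteq\Box^n$ as elements of the cone, in the pattern already suggested by Lemma~\ref{pairscorrespond} for pairs of commuting normals.
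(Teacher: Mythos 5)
Your injectivity argument, the extraction of $y_f\in\im(f)$, and the functoriality identity $y_{\psi\circ f}=\psi(y_f)$ all match the paper's proof. But the heart of surjectivity rests on a false premise: you invoke ``the codensity of $[0,1]$ in $\CHs$, a classical result of Isbell.'' The unit interval is \emph{not} codense in $\CHs$ --- the paper points this out immediately after Lemma~\ref{squarecover}, citing the counterexample in the proof of Isbell's Theorem~2.6. Isbell's actual codensity theorem concerns the square $\Box$, i.e.\ it \emph{is} the statement of Lemma~\ref{squarecover} (via Remark~\ref{codense}), so appealing to it would be circular, while appealing to the interval version appeals to a false statement. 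Consequently the step ``codensity then provides a unique $x\in X$ with $g(x)=y_g$ for all $g:X\to[0,1]$'' has no justification, and your reduction of the $\Box$-cone to the $[0,1]$-subcone, even though that restriction step itself is fine (a retraction $\Box\to[0,1]$ suffices, no Tietze needed), cannot close the argument.

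What is missing is exactly the two-dimensionality of $\Box$, which the paper exploits as follows: one shows $\bigcap_f f^{-1}(y_f)\neq\emptyset$ by compactness, reducing to finite intersections; the induction step replaces $f_1,f_2$ by $[0,1]$-valued functions with the same fibres over their values (via $t\mapsto|t-y_{f_i}|$ and the identity $y_{\psi f}=\psi(y_f)$) and then \emph{pairs} them into a single member of the cone, $g=(f_1,f_2):X\to\Box$, so that applying the functoriality identity to the two coordinate projections gives $g^{-1}(y_g)=f_1^{-1}(y_{f_1})\cap f_2^{-1}(y_{f_2})$; the base case is your $y_f\in\im(f)$ observation. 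Your closing paragraph in fact gestures at precisely this repair (``auxiliary maps $X\to\Box$ encoding finite joint images''), and relations like $y_{f_1+f_2}=y_{f_1}+y_{f_2}$ do follow this way from compatibility along the pairing, but you leave it as a conjecture rather than carrying it out. As written, the proposal therefore has a genuine gap at its central step.
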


By Remark~\ref{codense}, this is a restatement of the known fact that $\Box$ is codense in $\CHs$~\cite{isbell}.

While one thinks of a conventional sheaf condition as saying that a function is uniquely determined by a compatible assignment of values to all (local neighbourhoods of) points, this sheaf condition says that a point is uniquely determined by a compatible assignment of values to all functions.

\begin{proof}
We need to show that the diagram
\[
\xymatrix{ X \ar[r]	& \mathlarger{\mathlarger{\prod}}_{f:X\to \Box} \Box \ar@<1ex>[r] \ar@<-1ex>[r]	& \mathlarger{\mathlarger{\prod}}_{g,h:X\to\Box} (\Box \pushout{g}{h}\Box)	}
\]
is an equalizer. Since functions $X\to\Box$ separate points in $X$, it is clear that the map $X\to\prod_f\Box$ is injective.

Surjectivity is more difficult. Suppose that $v \in \prod_{f:X\to\Box} \Box$ is a compatible family of sections. Then in particular, we have 
\be
v(hf) = h(v(f)) \quad\textrm{ for all }\quad h:\Box\to \Box
\label{noncontextuality}
\ee
as an instance of the compatibility condition, since the square
\be
\begin{split}
\label{weakcomp}
\xymatrix{	X \ar[r]^{hf} \ar[d]_f	& \Box \ar@{=}[d]	\\
		\Box \ar[r]_h		& \Box			}
\end{split}
\ee
commutes.

We have to show that there exists a point $x\in X$ with $v(f) = f(x)$ for all $f:X\to\Box$. This set of equations is equivalent to $x\in \bigcap_f f^{-1}(v(f))$. Hence it is enough to show that $\bigcap_f f^{-1}(v(f))$ is nonempty. By compactness, it is sufficient to prove that any finite intersection
\[
f_1^{-1}(v(f_1)) \cap \ldots \cap f_n^{-1}(v(f_n))
\]
for a finite set of functions $f_1,\ldots,f_n:X\to\Box$ is nonempty. Using induction on $n$, the induction step is obvious if for given $f_1,f_2$ we can exhibit $g:X\to\Box$ such that
\[
g^{-1}(v(g)) = f_1^{-1}(v(f_1)) \cap f_2^{-1}(v(f_2)).
\]
First, by~\eqref{noncontextuality}, we can assume that both $f_1$ and $f_2$ actually take values in $[0,1]$, e.g.~by considering
\[
h_1 \: : \: \Box\longrightarrow[0,1], \qquad t\longmapsto |t - v(f_1)|
\]
and replacing $f_1$ by $h_1 f_1$, which results in
\[
(h_1 f_1)^{-1}(v(h_1 f_1)) \stackrel{\eqref{noncontextuality}}{=} f_1^{-1}(h_1^{-1}(h_1(v(f_1)))) = f_1^{-1}(h_1^{-1}(0)) = f_1^{-1}(v(f_1)),
\]
and similarly for $f_2$. After this replacement, we can take $g(t)\defin (f_1(t),f_2(t))$, and the induction step is complete upon applying~\eqref{noncontextuality} to the two coordinate projections.

Finally, we need to show that any individual set $f^{-1}(v(f))$ is nonempty as the base of the induction. For given $s\in[0,1]\setminus\im(f)$, choose $h$ such that $h(\im(f))=\{0\}$ and $h(s)=1$ by the Tietze extension theorem. Then
\[
0 = v(0) = v(hf) = h(v(f)),
\]
and hence $v(f)\neq s$. Therefore $v(f)\in\im(f)$, as was to be shown.
\end{proof}

For us, this effective-monic cone is the most important one. We now consider some other examples of effective-monic cones in $\CHs$, which shed some light on their general behaviour. This is relevant for our main line of thought only as a source of examples.

As the counterexample given in the proof of~\cite[Theorem 2.6]{isbell} shows, this does generally not hold with $[0,1]$ in place of $\Box$. However, at least if $X$ is extremally disconnected, then it is still true, as an immediate consequence of the following result:

\begin{lemma}
\label{weakvalulem}
If $X$ is extremally disconnected, then $\{f:X\to\mathbf{4}\}$ is effective-monic.
\end{lemma}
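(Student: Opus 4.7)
The plan is to invoke Lemma~\ref{pointsonly} and reduce to showing that $\CHs(\mathbf{1},-)$ satisfies the sheaf condition on the cone $\{f:X\to\mathbf{4}\}$. Concretely, given a compatible family $\{y_f\in\mathbf{4}\}$, I need to produce a unique $x\in X$ with $f(x)=y_f$ for every $f:X\to\mathbf{4}$. Uniqueness follows because extremal disconnectedness implies total disconnectedness, so $\mathbf{2}$- (and therefore $\mathbf{4}$-)valued continuous functions separate points of $X$. For existence, each preimage $f^{-1}(y_f)$ is clopen in $X$; by compactness it suffices to show that every finite intersection $f_1^{-1}(y_{f_1})\cap\ldots\cap f_n^{-1}(y_{f_n})$ is nonempty, which I argue by induction on $n$.

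The technical setup is a naturality identity extracted from compatibility,
\[
y_{g\circ f}=g(y_f)\qquad\text{for every }f:X\to\mathbf{4}\text{ and }g:\mathbf{4}\to\mathbf{4},
\]
obtained from the square with $f_2\defin g\circ f_1$ and $h\defin\id$. For the induction step, the key observation is that the $\mathbf{4}$-valued cone has just enough room to pair two $\{0,1\}$-valued functions. Given $f_1,f_2:X\to\mathbf{4}$, I apply naturality with $h_i:\mathbf{4}\to\mathbf{4}$ collapsing $y_{f_i}\mapsto 0$ and every other element to $1$, replacing each $f_i$ by $\tilde f_i\defin h_i\circ f_i$ with image in $\{0,1\}\subseteq\mathbf{4}$, $y_{\tilde f_i}=0$, and $\tilde f_i^{-1}(0)=f_i^{-1}(y_{f_i})$. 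The pairing $g\defin 2\tilde f_1+\tilde f_2:X\to\mathbf{4}$ is then continuous with $g^{-1}(0)=\tilde f_1^{-1}(0)\cap\tilde f_2^{-1}(0)$, and naturality against the two binary-digit projections $\mathbf{4}\to\mathbf{4}$ forces $y_g=0$. Hence a binary intersection collapses to a single preimage $g^{-1}(y_g)$, completing the inductive step.

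I expect the base case $n=1$, which requires $y_f\in\im(f)$, to be the most delicate step. The strategy is to pin down the values on constant maps first: for $g$ constant at $j$, one has $g\circ c_0=c_j$ for the constant map $c_0:X\to\mathbf{4}$, so naturality forces $y_{c_j}=j$ for every $j\in\mathbf{4}$. Suppose toward contradiction that $y_f\notin\im(f)$, and pick any $j'\neq y_f$. Let $h:\mathbf{4}\to\mathbf{4}$ fix $y_f$ and send every other element to $j'$; since $f$ avoids $y_f$, one has $h\circ f=c_{j'}$, and naturality yields $j'=y_{c_{j'}}=y_{h\circ f}=h(y_f)=y_f$, a contradiction. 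Assembling uniqueness, the induction, and the base case verifies the sheaf condition. I note that extremal disconnectedness enters this sketch only via total disconnectedness to secure uniqueness; the full strength of the hypothesis does not seem to be used in the existence argument.
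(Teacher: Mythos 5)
Your proof is correct and follows essentially the same route as the paper's: reduction to points (Lemma~\ref{pointsonly}), separation by clopen sets for injectivity, and a compactness/finite-intersection induction whose key step encodes two $\{0,1\}$-valued functions into a single $\mathbf{4}$-valued function and decodes it via endomaps of $\mathbf{4}$. The differences are only organizational---you intersect the sets $f^{-1}(y_f)$ directly and verify $y_f\in\im(f)$ explicitly, whereas the paper works with indicator functions of clopens $Y$ with $v(\chi_Y)=1$ and finishes with a partition argument---and your closing remark that only total disconnectedness is actually used is equally true of the paper's own proof.
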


Here, we write $\mathbf{4}\defin\{0,1,2,3\}$, and the proof uses indicator functions $\chi_Y : X\to\mathbf{4}$ of clopen sets $Y\subseteq X$.

\begin{proof}
Since the clopen sets separate points, the injectivity is again clear and the burden of the proof is in the surjectivity. So let $v:\mathbf{4}^X\to\mathbf{4}$ be a compatible family of sections.

As in the proof of Lemma~\ref{squarecover}, we show that the intersection 
\[
\bigcap_{Y \text{ clopen, } v(\chi_Y) = 1} Y
\]
is nonempty. Again by compactness and an induction argument as in the proof of Lemma~\ref{squarecover}, it is enough to show that for any clopen $Y_1,Y_2\subseteq X$ with $v(\chi_{Y_1}) = 1$ and $v(\chi_{Y_2}) = 1$, we also have $v(\chi_{Y_1\cap Y_2}) = 1$. To see this, we consider the function
\[
f\defin \chi_{Y_1} + 2\chi_{Y_2},
\]
and apply the compatibility condition in the form~\eqref{noncontextuality} for various $h$. Choosing $h$ such that $0,2\mapsto 0$ and $1,3\mapsto 1$ results in $hf=\chi_{Y_1}$, and hence $v(f)\in \{1,3\}$. Similarly, mapping $0,1\mapsto 0$ and $2,3\mapsto 1$ yields $hf=\chi_{Y_2}$, and therefore $v(f)\in\{2,3\}$. Overall, we obtain $v(f)=3$, and apply $h$ with $0,1,2\mapsto 0$ and $3\mapsto 1$ to conclude $v(\chi_{Y_1\cap Y_2})=1$ from $hf=\chi_{Y_1\cap Y_2}$.

So there is at least one point $x_0\in X$ such that $v(\chi_Y)=1$ implies $x_0\in Y$ for all clopen $Y\subseteq X$. We then claim that $v(f) = f(x_0)$ for all $f:X\to\mathbf{4}$. This follows from writing
\[
f = 0\chi_{Y_0} + 1\chi_{Y_1} + 2\chi_{Y_2} + 3\chi_{Y_3}
\]
for a partition of $X$ by clopens $Y_0,Y_1,Y_2,Y_3\subseteq X$, and applying~\eqref{noncontextuality} with $h$ such that $v(f)\mapsto 1$, while the other three integers map to $0$. 
\end{proof}

A singleton cone $\{f:X\to Y\}$ is effective-monic if and only if $f$ is injective. For cones consisting of exactly two functions, the necessary and sufficient criterion is as follows:

\begin{lemma}
\label{malcev}
A cone $\{f:X\to Y,g:X\to Z\}$ consisting of exactly two functions is effective-monic if and only if the pairing $(f,g) : X\to Y\times Z$ is a Mal'cev relation, meaning that $f$ and $g$ are jointly injective and their joint image
\[
R\defin\im((f,g))\subseteq Y\times Z
\]
satisfies the implication
\be
\label{eqmalcev}
(y,z) \in R,\qquad (y',z) \in R,\qquad (y,z') \in R \qquad\Longrightarrow\qquad (y',z') \in R.
\ee
\end{lemma}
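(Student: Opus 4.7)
The plan is to invoke Lemma~\ref{pointsonly}, which reduces effective-monicity of $\{f,g\}$ to the assertion that $(f,g) : X \to Y \times Z$ is a bijection between $X$ and the compatibility set---the set of pairs $(y,z) \in Y \times Z$ whose components have the same image in the pushout $Y \pushout{f}{g} Z$. The degenerate pushout conditions arising from $(i,j)=(1,1)$ and $(2,2)$, namely $y \in \im(f)$ and $z \in \im(g)$, are automatically subsumed once one demands a pushout-chain connecting $y$ to $z$. Since $X$ is compact and $Y \times Z$ is Hausdorff, every continuous injection $(f,g)$ is automatically a homeomorphism onto its image, so only the set-theoretic content needs to be handled.

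For the forward direction, joint injectivity provides the injective pairing onto $R = \im((f,g))$, and the inclusion $R \subseteq$ compatibility set is automatic. For the reverse inclusion, I would first spell out the pushout equivalence explicitly: $y \in Y$ and $z \in Z$ are identified in $Y \pushout{f}{g} Z$ iff there exist $x_1,\ldots,x_{2k+1} \in X$ with $y = f(x_1)$, $z = g(x_{2k+1})$, $g(x_{2i-1}) = g(x_{2i})$ for $1 \le i \le k$, and $f(x_{2i}) = f(x_{2i+1})$ for $1 \le i \le k$. Then I would induct on $k$. The base case $k = 0$ is immediate from the definition of $R$. For the step, the inductive hypothesis supplies $(f(x_1), g(x_{2k-1})) \in R$, and the Mal'cev implication~\eqref{eqmalcev}, applied to the triple $(f(x_{2k}), g(x_{2k}))$, $(f(x_1), g(x_{2k-1})) = (f(x_1), g(x_{2k}))$, and $(f(x_{2k+1}), g(x_{2k+1})) = (f(x_{2k}), g(x_{2k+1}))$, produces the required pair $(f(x_1), g(x_{2k+1})) \in R$.

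Conversely, effective-monicity immediately yields joint injectivity, and the Mal'cev implication then follows from transitivity of the pushout equivalence: given $(y,z), (y',z), (y,z') \in R$, all four of $y, y', z, z'$ represent the same class in $Y \pushout{f}{g} Z$, so $(y',z')$ is compatible and hence lies in $R$. The main obstacle will be the bookkeeping in the chain induction---identifying the correct three pairs, assembled from the inductive hypothesis and the trivially-valid basic relations $(f(x_i), g(x_i)) \in R$, so that they fit the asymmetric pattern of~\eqref{eqmalcev} and allow one to walk across the zigzag two steps at a time.
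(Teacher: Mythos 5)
There is a genuine gap in your ``Mal'cev $\Rightarrow$ effective-monic'' direction, and it is exactly the point where the topology of $\CHs$ enters. You characterize identification in $Y\pushout{f}{g}Z$ by the existence of a finite zigzag chain $x_1,\ldots,x_{2k+1}$, i.e.\ by the \emph{set-theoretic} equivalence relation generated by $f(x)\sim g(x)$. But the pushout in $\CHs$ is the quotient of $Y\amalg Z$ by the \emph{closed} equivalence relation generated by these identifications (the naive quotient of a compact space need not be Hausdorff), and this closed relation can identify strictly more points than any finite chain accounts for. So your induction only shows that chain-connected pairs lie in $R$; it does not show that every pair identified in the actual $\CHs$-pushout lies in $R$, which is what the surjectivity half of the sheaf condition requires. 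Your remark that compactness of $X$ reduces everything to ``set-theoretic content'' is true for the injectivity/embedding part, but not for computing the pushout.

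The paper closes this hole by using the Mal'cev condition~\eqref{eqmalcev} to show that the generated equivalence relation is already $\id_{Y\amalg Z}\cup R\cup R^\op\cup(R\circ R^\op)\cup(R^\op\circ R)$, i.e.\ that chains of length at most two suffice; since $R=\im((f,g))$ is compact and relational composites of closed relations between compact Hausdorff spaces are closed, this is a finite union of closed sets, hence a \emph{closed} equivalence relation, and therefore coincides with what the pushout quotients by. Only then does one get ``$y$ and $z$ are identified iff $(y,z)\in R$.'' Your zigzag induction is essentially the same combinatorial computation as verifying transitivity of this length-$\le 2$ relation, so the fix is modest: replace the arbitrary-chain characterization by the statement that the explicit five-term relation above is an equivalence relation (via~\eqref{eqmalcev}) and closed, and conclude from that. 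Your converse direction (effective-monic $\Rightarrow$ Mal'cev) via transitivity of identification in the pushout and the sheaf condition matches the paper's argument and is fine.
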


For the notion of Mal'cev relation, see~\cite{garner}.

\begin{proof}
We use the criterion of Lemma~\ref{pointsonly}. The injectivity part of the sheaf condition is equivalent to injectivity of $(f,g) : X\to Y\times Z$. Assuming that this holds, we identify $X$ with the joint image $R\subseteq Y\times Z$. 

Now if $\{f,g\}$ is effective-monic and we have $y,y'\in Y$ and $z,z'\in Z$ as in~\eqref{eqmalcev}, then each of the three pairs $(y,z)$, $(y',z)$ and $(y,z')$ represents a point of $X$. So since $(y,z)$ is in particular a compatible pair of sections, in $Y\pushout{f}{g}Z$ the image of $y$ coincides with the image of $z$. By the same reasoning applied to $(y',z)$, also $y'$ maps to the same point in $Y\pushout{f}{g}Z$, and by $(y,z')$ so does $z'$. Hence also $(y',z')$ is a compatible pair of sections, which must correspond to a point of $X$ due to the sheaf condition.

Conversely, suppose that~\eqref{eqmalcev} holds. The pushout $Y\pushout{f}{g}Z$ is the quotient of the coproduct $Y\amalg Z$ by the closed equivalence relation generated by $f(x)\sim g(x)$ for all $x\in X$, i.e.~by $y\sim z$ for all $(y,z)\in R$. In terms of relational composition, it is straightforward to check that
\[
	\id_{Y\amalg Z} \cup R \cup R^\op \cup (R\circ R^\op) \cup (R^\op\circ R) 
\]
is already an equivalence relation thanks to~\eqref{eqmalcev}. As a finite union of closed sets, it is also closed, and hence two points in $Y\amalg Z$ get identified in $Y\pushout{f}{g} Z$ if and only if they satisfy this relation. In particular, $y\in Y$ and $z\in Z$ map to the same point in $Y\pushout{f}{g}Z$ if and only if $(y,z)\in R$.
\end{proof}

In general, the pushout of an effective-monic cone along an arbitrary function is not effective-monic again. The following example shows even that the effective-monic cones on $\CHs$ do not form a coverage; an even more drastic example can be found in the proof of Proposition~\ref{nocoverage}.

\begin{example}
\label{ex4to3}
Take $X\defin\mathbf{4}=\{0,1,2,3\}$, and consider two maps to spaces with 3 points, 
\[
f\: : \: \{0,1,2,3\} \longrightarrow \{01,2,3\},\qquad g\: : \: \{0,1,2,3\} \longrightarrow \{0,1,23\},
\]
as illustrated by the projection maps in Figure~\ref{fig4to3}. By Lemma~\ref{malcev}, this cone is effective-monic. However, taking the pushout along the identification map
\[
h \: : \: \{0,1,2,3\} \longrightarrow \{0,12,3\}
\]
results in a cone consisting of $f' : \{0,12,3\}\to \{012,3\}$ and $g':\{0,12,3\}\to \{0,123\}$. Since the criterion of Lemma~\ref{malcev} fails, the cone $\{f',g'\}$ is not effective-monic. In particular, the pushout of an effective-monic cone is not necessarily effective-monic again. Worse, the collection of all effective-monic cones is not a coverage: for our original $\{f,g\}$, there does not exist any effective-monic cone $\{k_i : \{0,12,3\}\to Y_i\}_{i\in I}$ such that every $k_i h$ would factor through $f$ or $g$,
\[
\xymatrix{							&&				&	\{01,2,3\} \ar@{-->}[dd]^?	\\
		\{0,1,2,3\} \ar[urrr]^f \ar[rr]_g \ar[d]_h	&& \{0,1,23\} \ar@{-->}[dr]_?						\\
		\{0,12,3\} \ar[rrr]_{k_i}			&&				&	Y_i				}
\]
The reason is as follows: for every $i\in I$, we would need to have $k_i(0) = k_i(12)$ or $k_i(12) = k_i(3)$. If the former happens, consider the point $y_i\defin k_i(3)\in Y_i$, while if the latter happens take $y_i\defin k_i(0)$. (If both cases apply, these two prescriptions result in the same point $y_i = k_i(0) = k_i(3)$.) It is easy to check that the resulting family of points $\{y_i\}_{i\in I}$ is compatible. However, it does not arise from a point of $\{0,12,3\}$: since the $k_i$ must separate points, there must be $i$ with $k_i(0) = k_i(12) \neq k_i(3)$, and another $i$ with $k_i(0)\neq k_i(12) = k_i(3)$. Hence neither of $x\in\{0,12,3\}$ results in the given compatible family, and the cone $\{k_i\}$ is not effective-monic.
\end{example}

\begin{figure}
\[
\xymatrix@=.2cm{	&	&		\bullet	\ar@{}[l]|3		&		&	&	&	&	\bullet	\ar@{}[l]|3	\\
			&	&		\bullet	\ar@{}[l]|2		& \ar[rrr]^f	&	&	&	&	\bullet	\ar@{}[l]|2	\\
			\bullet	\ar@{}[u]|0	& \bullet \ar@{}[u]|1	&	&		&	&	&	&	\bullet	\ar@{}[l]|{01}	\\
																			\\
			&	\ar[ddd]_g														\\	
																			\\
			&																\\
			&			&			&										\\
			\bullet	\ar@{}[u]|0	& \bullet \ar@{}[u]|1	& \bullet \ar@{}[u]|{23}							}
\]
\caption{Illustration of the cone $\{f,g\}$ of Example~\ref{ex4to3}.}
\label{fig4to3}
\end{figure}
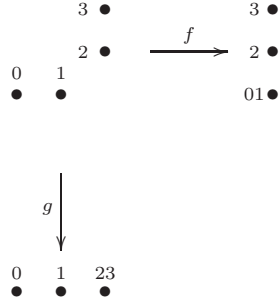

Incidentally, the cone $\{f',g'\}$ from above is arguably the simplest example of a cone that separates points (is jointly injective) without being effective-monic.

\begin{remark}
The previous example can also be understood in terms of effectus theory~\cite[Assumption~1]{jacobs}: the relevant pushout square is of the form
\[
\xymatrix{ W + Y \ar[r]^{\id + f} \ar[d]_{g + \id} & W + Z \ar[d]^{g + \id} \\
	  X + Y \ar[r]_{\id + f} & X + Z }
\]
where `$+$' is the coproduct in $\CHs$ and both $f$ and $g$ are the unique map $\mathbf{2}\to\mathbf{1}$. In general, any cone consisting of $\id+f:W+Y\to W+Z$ and $g+\id:W+Y\to X+Y$ is effective-monic by Lemma~\ref{malcev}.

It is conceivable that there are deeper connections with effectus theory than just at the level of examples, but so far we have not explored this theme any further.
\end{remark}

Starting to get back to C*-algebras, we record one further statement about cones for further use.

\begin{lemma}
A cone $\{f_i : X\to Y_i\}$ separates points if and only if the ranges of the $C(f_i) : C(Y_i)\to C(X)$ generate $C(X)$ as a C*-algebra.
\label{seppoints}
\end{lemma}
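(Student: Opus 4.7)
The plan is to prove both implications using the Stone-Weierstrass theorem in one direction and a direct computation in the other. I would denote by $\mathcal{A}\subseteq C(X)$ the $*$-subalgebra generated by the union $\bigcup_i \im(C(f_i))$; note that $\mathcal{A}$ contains the constants since each $C(f_i)$ is unital, and that $C(f_i)(g) = g\circ f_i$ for $g\in C(Y_i)$.

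For the ``only if'' direction, suppose $\{f_i\}$ separates points. I would fix $x\neq x'$ in $X$, pick $i$ with $f_i(x)\neq f_i(x')$, and use Urysohn's lemma on the compact Hausdorff space $Y_i$ to obtain $g\in C(Y_i)$ with $g(f_i(x))\neq g(f_i(x'))$. Then $C(f_i)(g)\in \mathcal{A}$ separates $x$ and $x'$, so $\mathcal{A}$ separates points of $X$. Since $\mathcal{A}$ is a unital $*$-subalgebra of $C(X)$ that separates points, Stone-Weierstrass gives that $\mathcal{A}$ is dense in $C(X)$, which is precisely to say that the ranges of the $C(f_i)$ generate $C(X)$ as a C*-algebra.

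For the ``if'' direction, I would prove the contrapositive. Suppose $\{f_i\}$ does not separate points, i.e.~there exist $x\neq x'$ in $X$ with $f_i(x)=f_i(x')$ for every $i\in I$. Then for every $g\in C(Y_i)$, the function $C(f_i)(g)=g\circ f_i$ takes the same value at $x$ and $x'$. The subset
\[
\mathcal{B} \defin \{\: h\in C(X) \:|\: h(x) = h(x')\:\}
\]
is a closed unital $*$-subalgebra of $C(X)$ containing every $\im(C(f_i))$, hence it contains the C*-algebra they generate. If the ranges generated all of $C(X)$, we would get $\mathcal{B}=C(X)$, contradicting the fact that $C(X)$ separates the points of $X$ (by Urysohn's lemma again).

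There is no real obstacle here; the argument is routine once one recognizes it as a Stone-Weierstrass statement. The only mild subtlety is making sure to use unitality so that constants lie in $\mathcal{A}$, and to invoke Urysohn in $Y_i$ rather than on $X$ directly in the forward direction.
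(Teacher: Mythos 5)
Your argument is correct and is essentially the paper's proof: both rest on the Stone--Weierstrass theorem together with the observation that continuous functions on the compact Hausdorff spaces $Y_i$ separate points, so that the generated subalgebra separates points of $X$ exactly when the $f_i$ do. The paper merely phrases this as a single chain of equivalences rather than splitting off the converse via the contrapositive.
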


\begin{proof}
By the Stone-Weierstrass theorem, the C*-subalgebra generated by the ranges of the $C(f_i)$ equals $C(X)$ if and only if it separates points (as a subalgebra). This C*-subalgebra is generated by the elements $g_i\circ f_i\in C(X)$, where $g_i : Y_i\to [0,1]$ ranges over all functions, and hence the subalgebra separates points if and only if these functions separate points. This in turn is equivalent to the $f_i$ separating points, since the $g_i : Y_i\to [0,1]$ also separate points.
\end{proof}

\subsection*{How to guarantee commutativity?}
\label{guarcommsec}

The previous subsection was concerned with sheaf conditions satisfied by the functors $-(A)$ for commutative $A$. Now, we want to investigate which of these sheaf conditions hold for general $A$.

\begin{definition}
\label{guarcommdef}
An effective-monic cone $\{f_i:X\to Y_i\}_{i\in I}$ in $\CHs$ is \emph{guaranteed commutative} if every functor $-(A)$ satisfies the sheaf condition on it.
\end{definition}

In detail, $-(A)$ satisfies the sheaf condition on $\{f_i\}$ if and only if restricting a $*$-homomorphism $\alpha:C(X)\to A$ along all $C(f_i):C(Y_i)\to C(X)$ to families $\beta_i : C(Y_i)\to A$ that are compatible in the sense that $\beta_i\circ C(g) = \beta_j\circ C(h)$ for every diagram of the form~\eqref{compdiag},
\[
\xymatrix{	X \ar[r]^{f_i} \ar[d]_{f_j}	& Y_i \ar[d]^g	\\
		Y_j \ar[r]_h			& Z		}
\]
results in a bijection. In terms of the functor $C:\CHs^\op\to\Calg$, this holds if and only if the diagram
\[
\xymatrix{	\vdots									& C(Y_i) \ar[drr]		\\
		C(Y_i)\pullback{C(f_i)}{C(f_j)}C(Y_j) \ar[ur]^{C(f_i)} \ar[dr]_{C(f_j)}	& \vdots	&& C(X)		\\
		\vdots									& C(Y_j) \ar[urr]		}
\]
which is the image of~\eqref{pointssheaf} under $C$, is a colimit in $\Calg$. Here, we have used the canonical isomorphism $C(Y_i\pushout{f_i}{f_j} Y_j)\cong C(Y_i)\pullback{C(f_i)}{C(f_j)} C(Y_j)$, which holds because $C$ is a right adjoint. So we are dealing with an instance of the question, which limits does $C$ turn into colimits?

\begin{remark}
\label{gluemeas}
In terms of the physical interpretation of Remarks~\ref{measurements} and~\ref{reconstruct}, the sheaf condition on a cone $\{f_i:X\to Y_i\}$ states that every compatible family of measurements with outcomes in the $Y_i$ corresponds to a unique measurement with values in $X$ which coarse-grains to the given measurements via the $f_i$. 
\end{remark}

The terminology of Definition~\ref{guarcommdef} is motivated by the following observation:

\begin{lemma}
An effective-monic cone $\{f_i:X\to Y_i\}_{i\in I}$ is guaranteed commutative if and only if for every $A\in\Calg$ and compatible family $\beta_i : C(Y_i)\to A$, the ranges of the $\beta_i$ commute.
\label{gclem}
\end{lemma}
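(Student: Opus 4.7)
The plan here is immediate: assuming the cone is guaranteed commutative and that $\{\beta_i:C(Y_i)\to A\}$ is any compatible family, the sheaf condition for $-(A)$ yields $\alpha:C(X)\to A$ with $\beta_i=\alpha\circ C(f_i)$. Since $C(X)$ is commutative, so is its image $\alpha(C(X))$, and since each $\beta_i(C(Y_i))\subseteq \alpha(C(X))$, the ranges of the $\beta_i$ pairwise commute.

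\textbf{Backward direction.} The key idea is to use the commutativity hypothesis to pass to a commutative C*-subalgebra, and so to reduce the sheaf condition for $-(A)$ to the sheaf condition for a representable functor $\CHs(W,-)$, which holds by Definition~\ref{effmondef}. Concretely, given a compatible family $\{\beta_i\}$ whose ranges pairwise commute, I would let $B\subseteq A$ denote the C*-subalgebra generated by $\bigcup_i\beta_i(C(Y_i))$. By hypothesis $B$ is commutative, so Gelfand duality gives $B\cong C(W)$ for some $W\in\CHs$. Writing $\iota:B\hookrightarrow A$ for the inclusion, I factor $\beta_i=\iota\circ\beta_i'$ with $\beta_i':C(Y_i)\to C(W)$. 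Injectivity of $\iota$ transfers the compatibility of $\{\beta_i\}$ (as in~\eqref{compdiag}) verbatim to $\{\beta_i'\}$, so that $\{\beta_i'\}$ is a compatible family for $-(C(W))\cong\CHs(W,-)$. Because the cone is effective-monic, the sheaf condition for this representable produces a unique $\alpha_0:C(X)\to C(W)$ with $\beta_i'=\alpha_0\circ C(f_i)$ for all $i$. Then $\alpha\defin\iota\circ\alpha_0:C(X)\to A$ satisfies $\beta_i=\alpha\circ C(f_i)$, establishing the existence part of the sheaf condition.

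Uniqueness of $\alpha$ follows from Lemma~\ref{seppoints}: since effective-monic cones in particular separate points, the ranges of the $C(f_i)$ generate $C(X)$ as a C*-algebra, and hence any $*$-homomorphism out of $C(X)$ is determined by its restrictions along the $C(f_i)$. I do not anticipate any serious obstacle; the whole content is in the recognition that ``commuting ranges'' is precisely the condition needed to factor through $\iota$ and thereby reduce to the effective-monic sheaf condition on the representable.
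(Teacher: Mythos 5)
Your proposal is correct and follows essentially the same route as the paper: the forward direction reads off commutativity from the factorization through $\alpha$, and the backward direction reduces the sheaf condition for $-(A)$ to the effective-monic condition on the representable $-(C(W))\cong\CHs(W,-)$ via the commutative subalgebra containing all the ranges. The only cosmetic difference is your uniqueness argument, which invokes Lemma~\ref{seppoints} directly (the ranges of the $C(f_i)$ generate $C(X)$, so restriction determines $\alpha$), whereas the paper deduces $\im(\alpha)=\im(\alpha')$ and then falls back on the commutative case; both are fine.
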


\begin{proof}
Suppose that the criterion holds. For $A\in\Calg$, we show that restricting a $*$-homomorphism $C(X)\to A$ to a compatible family of $*$-homomorphisms $C(Y_i)\to A$ is a bijection. We first show injectivity, so let $\alpha,\alpha' :  C(X)\to A$ be such that the resulting families coincide, $\beta_i = \beta'_i$. In particular, this means that the range of each $\beta_i$ coincides with the range of $\beta'_i$, and hence $\im(\alpha) = \im(\alpha')$ by Lemma~\ref{seppoints}. Hence we are back in the commutative case, where Gelfand duality and the effective-monic assumption apply.

For surjectivity, let a compatible family $\beta_i : C(Y_i)\to A$ be given. By assumption, there is some commutative subalgebra $B\subseteq A$ which contains the ranges of all $\beta_i$, and it is sufficient to prove the sheaf condition with $B$ in place of $A$. The claim then follows from Gelfand duality together with the assumption that $\{f_i\}$ is effective-monic.

Conversely, if the sheaf condition holds on a functor $-(A)$, then the $\beta_i : C(Y_i)\to A$ all arise from restricting some $\alpha : C(X)\to A$ along $C(f_i):C(Y_i)\to C(X)$. In particular, the range of every $\beta_i$ is contained in the range of $\alpha$, which is a commutative C*-subalgebra.
\end{proof}

The crucial ingredient here is the fact that commutativity is a pairwise property, in the sense that if any family of elements in a C*-algebra commute pairwise, then they generate a commutative C*-subalgebra. We will meet this property again in Definition~\ref{piecewisedef}.

In the sense of Lemma~\ref{gclem}, the question is under what conditions an effective-monic cone `guarantees commutativity' of the ranges of a compatible family.

\begin{example}
\label{exgc}
The effective-monic cone of Example~\ref{ex4to3} is guaranteed commutative: in terms of indicator functions of individual points, the compatibility assumption on a pair of $*$-homomorphisms $\beta_f:C(\{01,2,3\})\to A$ and $\beta_g:C(\{0,1,23\}) \to A$ is that
\[
\beta_f(\chi_{01}) = \beta_g(\chi_0) + \beta_g(\chi_1),\qquad \beta_g(\chi_{23}) = \beta_f(\chi_2) + \beta_f(\chi_3).
\]
So $\beta_g(\chi_0)$ is a projection below $\beta_f(\chi_{01})$, and in particular orthogonal to $\beta_f(\chi_2)$ and $\beta_f(\chi_3)$, so that it commutes with every element in the range of $\beta_f$. Proceeding like this proves that the ranges of $\beta_f$ and $\beta_g$ commute entirely.
\end{example}

\begin{example}
Let $\Tl\subseteq\Cl$ be the unit circle, and $p_\Re,p_\Im : \Tl\to [-1,+1]$ the two coordinate projections. Then the cone $\{p_\Re,p_\Im\}$ is effective-monic by Lemma~\ref{malcev}, or alternatively since applying $p_\Re$ and $p_\Im$ establishes a bijection between points of $x$ and pairs of numbers $y_\Re,y_\Im\in[-1,+1]$ with $y_\Re^2 + y_\Im^2 = 1$. Hence compatible families $\{\beta_\Re,\beta_\Im\}$ are $*$-homomorphisms $\beta_\Re:C([-1,+1])\to A$ and $\beta_\Im:C([-1,+1])\to A$ that correspond to self-adjoint elements $\beta_\Re(\id),\beta_\Im(\id)\in [-1,+1](A)$ with $\beta_\Re(\id)^2 + \beta_\Im(\id)^2 = 1$. Such a pair of self-adjoints arises from a unitary by functional calculus if and only if they commute. For example, choosing any $A$ with non-commuting symmetries $s_\Re$ and $s_\Im$ provides a compatible family that does not arise in this way upon putting $\beta_\Re\defin s_\Re/\sqrt{2}$ and $\beta_\Im\defin s_\Im/\sqrt{2}$. Therefore $\{p_\Re,p_\Im\}$ is not guaranteed commutative.
\label{circleproj}
\end{example}

So far, we know of one powerful sufficient condition for guaranteeing commutativity:

\begin{definition}
An effective-monic cone $\{f_i:X\to Y_i\}_{i\in I}$ in $\CHs$ is \emph{directed} if for every $i\in I$ there is a cone $\{g_i^j : Y_i\to Z_i^j\}_{j\in J_i}$ which separates points, and such that for every $i,i'\in I$ and $j\in J_i$, $j'\in J_{i'}$ there is $k\in I$ and a diagram
\be
\begin{split}
\label{directeddiag}
\xymatrix{				& X \ar[dl]_{f_i} \ar[d]|{f_k} \ar[dr]^{f_{i'}}					\\
		Y_i \ar[d]_{g_i^j}	& Y_k \ar[dl] \ar[dr]				& Y_{i'} \ar[d]^{g_{i'}^{j'}}	\\
		Z_i^j			&						& Z_{i'}^{j'}			}
\end{split}
\ee
\label{directeddef}
\end{definition}

Note that this definition can be considered in principle in any category.

\begin{proposition}
If $\{f_i\}$ is effective-monic and directed, then it is also guaranteed commutative.
\label{guarcommcrit}
\end{proposition}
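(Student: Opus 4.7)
The plan is to invoke Lemma~\ref{gclem}, which reduces the task to showing that for any $A\in\Calg$ and any compatible family $\{\beta_i : C(Y_i)\to A\}_{i\in I}$, the ranges of the $\beta_i$ pairwise commute; combined with the fact that commutativity is a pairwise property, this suffices to place all the ranges inside a single commutative C*-subalgebra.

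So fix two indices $i,i'\in I$ and let me show that $\im(\beta_i)$ and $\im(\beta_{i'})$ commute. By Lemma~\ref{seppoints}, since the separating cone $\{g_i^j : Y_i\to Z_i^j\}_{j\in J_i}$ separates points, the ranges of the $C(g_i^j)$ generate $C(Y_i)$ as a C*-algebra; therefore the union $\bigcup_{j\in J_i}\im(\beta_i\circ C(g_i^j))$ generates $\im(\beta_i)$ as a C*-subalgebra, and analogously for $i'$. Consequently, by continuity and the $*$-algebra structure, it is enough to verify that for every $j\in J_i$ and $j'\in J_{i'}$, every element in $\im(\beta_i\circ C(g_i^j))$ commutes with every element in $\im(\beta_{i'}\circ C(g_{i'}^{j'}))$.

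Now the directedness hypothesis supplies an index $k\in I$ together with maps $h_i^j : Y_k\to Z_i^j$ and $h_{i'}^{j'}: Y_k\to Z_{i'}^{j'}$ making diagram~\eqref{directeddiag} commute. In particular the two squares
\[
\xymatrix{	X \ar[r]^{f_i} \ar[d]_{f_k}	& Y_i \ar[d]^{g_i^j}	\\
		Y_k \ar[r]_{h_i^j}		& Z_i^j			}
\qquad\qquad
\xymatrix{	X \ar[r]^{f_{i'}} \ar[d]_{f_k}	& Y_{i'} \ar[d]^{g_{i'}^{j'}}	\\
		Y_k \ar[r]_{h_{i'}^{j'}}	& Z_{i'}^{j'}			}
\]
are instances of the compatibility diagram~\eqref{compdiag}, so the compatibility of the family $\{\beta_\ell\}_\ell$ yields
\[
\beta_i\circ C(g_i^j) = \beta_k\circ C(h_i^j),\qquad \beta_{i'}\circ C(g_{i'}^{j'}) = \beta_k\circ C(h_{i'}^{j'}).
\]
Thus both ranges in question lie inside $\im(\beta_k)$, which is commutative since $\beta_k$ is a $*$-homomorphism out of the commutative C*-algebra $C(Y_k)$. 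Hence the desired commutation holds.

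This establishes that $\im(\beta_i)$ and $\im(\beta_{i'})$ commute for all pairs $i,i'$, so the family $\bigcup_{i\in I}\im(\beta_i)$ consists of pairwise commuting elements and therefore generates a commutative C*-subalgebra of $A$. Lemma~\ref{gclem} then concludes the proof. The only subtle point is recognizing that the triangle structure in~\eqref{directeddiag} packages exactly the compatibility instance needed to realize both $\beta_i\circ C(g_i^j)$ and $\beta_{i'}\circ C(g_{i'}^{j'})$ as factoring through the single $\beta_k$; the rest is a straightforward application of Stone-Weierstrass via Lemma~\ref{seppoints} together with the density argument.
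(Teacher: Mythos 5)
Your proof is correct and follows essentially the same route as the paper's: reduce via Lemma~\ref{gclem} to pairwise commutation of the ranges, use Lemma~\ref{seppoints} to pass to the generating subalgebras coming from the separating cones $\{g_i^j\}$, and then use the directedness diagram~\eqref{directeddiag} together with compatibility to land both composites inside the commutative range of a single $\beta_k$. The extra detail you spell out (the explicit equalities $\beta_i\circ C(g_i^j)=\beta_k\circ C(h_i^j)$ as instances of~\eqref{compdiag}) is exactly what the paper compresses into one sentence, so there is nothing to add.
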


\begin{proof}
By Lemma~\ref{gclem}, it is enough to show that the ranges of a compatible family $\{\beta_i:C(Y_i)\to A\}$ commute. By Lemma~\ref{seppoints}, it is enough to prove that the range of $\beta_i\circ C(g_i^j) : C(Z_i^j)\to A$ commutes with the range of $\beta_{i'}\circ C(g_{i'}^{j'}): C(Z_{i'}^{j'})\to A$ for any $i,i'\in I$ and $j\in J_i$, $j'\in J_{i'}$. Thanks to~\eqref{directeddiag} and the compatibility, both of these ranges are contained in the range of $\beta_k : C(Y_k)\to A$, which is commutative.
\end{proof}

\newcommand{\cantorspace}{\mathbf{2}^\Nl}

\begin{example}
\label{cofiltered}
Let $\cantorspace$ be the Cantor space, with projections $p_n:\cantorspace\to\mathbf{2}^n$ for every $n\in\Nl$. Then the cone $\{p_n\}_{n\in\Nl}$ is effective-monic and directed. Therefore it is also guaranteed commutative.

More generally, let $\Lambda$ be a small cofiltered category and $L:\Lambda\to\CHs$ a functor of which we consider the limit $\lim_\Lambda L\in\CHs$. The cone of limit projections $\{p_\lambda : \lim_\Lambda L \to L(\lambda)\}$ is effective-monic (Example~\ref{limit}). With the trivial cones $\{\id\}$ on the codomains $L(\lambda)$, the cofilteredness implies that the cone is also directed, and therefore guaranteed commutative. What we have shown hereby in a roundabout manner is that a filtered colimit of commutative C*-algebras is again commutative.
\end{example}

Unfortunately, the converse to Proposition~\ref{guarcommcrit} is not true:

\begin{example}
The effective-monic cone $\{f,g\}$ of Examples~\ref{ex4to3} and~\ref{exgc} is not directed, despite being guaranteed commutative. The reason is that the additional cones as in Definition~\ref{directeddef} would have to contain some $h:\{12,3,4\}\to Z_{12}$ with $h(3)\neq h(4)$, and similarly some $k:\{1,2,34\}\to Z_{34}$ with $k(1)\neq k(2)$. By~\eqref{directeddiag}, this would mean that the cone $\{f,g\}$ would have to contain a function that separates both $1$ from $2$ and $3$ from $4$, which is not the case.
\end{example}

So while Proposition~\ref{guarcommcrit} is sufficiently powerful for the remainder of this paper, it remains open to find a necessary and sufficient condition for guaranteeing commutativity.

\begin{lemma}
For any $X\in\CHs$, the cone $\{f:X\to\Box\}$ of all functions $f:X\to\Box$ is directed.
\label{guarcommsquare}
\end{lemma}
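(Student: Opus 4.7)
The plan is as follows. Lemma~\ref{squarecover} already establishes that the cone $\{f : X \to \Box\}_{f : X \to \Box}$ is effective-monic, so only the directedness conditions of Definition~\ref{directeddef} need to be checked. For every $f : X \to \Box$ in the cone, I would choose the separating family on $Y_f = \Box$ to consist of the two coordinate projections $p_1, p_2 : \Box \to [0,1]$, indexed by $J_f = \{1, 2\}$; these jointly separate the points of $\Box = [0,1]^2$ trivially.

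Given any $f, f' : X \to \Box$ and any choice of $j, j' \in \{1, 2\}$, I would construct the witnessing $f_k : X \to \Box$ by pairing the two relevant coordinate-composed functions, namely
\[
f_k(x) \defin \bigl(\, p_j(f(x)),\: p_{j'}(f'(x)) \,\bigr) \in [0,1]^2 = \Box.
\]
This is continuous, hence belongs to the index set of our cone. To fill in the diagram~\eqref{directeddiag}, I would take the two middle arrows $Y_k \to Z_f^j = [0,1]$ and $Y_k \to Z_{f'}^{j'} = [0,1]$ to be $p_1$ and $p_2$ respectively. The required identities $p_1 \circ f_k = p_j \circ f$ and $p_2 \circ f_k = p_{j'} \circ f'$ are then immediate from the definition of $f_k$.

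The main conceptual point, which could look like an obstacle before one settles on the right setup, is the choice of separating cone on $\Box$. The naive choice $\{\id_\Box\}$ would force one to recover both $f$ and $f'$ as post-compositions of a single $f_k : X \to \Box$ with continuous maps $\Box \to \Box$, which essentially asks for a continuous injection $\Box \times \Box \hookrightarrow \Box$ admitting continuous one-sided inverses onto each factor, and no such map exists. Splitting $\Box$ into its two coordinates via $\{p_1, p_2\}$ sidesteps this by handling one coordinate at a time, after which the verification is essentially formal.
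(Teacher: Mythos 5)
Your proof is correct and takes essentially the same route as the paper: the paper's proof chooses the separating cone on each copy of $\Box$ to be \emph{all} functions $\Box\to[0,1]$ rather than just the two coordinate projections, but the witnessing construction is identical---pair the two $[0,1]$-valued composites into a single map $X\to\Box$ (which lies in the cone) and recover them via the projections. The difference is purely cosmetic, so nothing further is needed.
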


By Lemma~\ref{squarecover}, we already know that this cone is effective-monic. By Proposition~\ref{guarcommcrit}, we can now conclude that it also is guaranteed commutative.

\begin{proof}
In Definition~\ref{directeddef}, take every $\{g_i^j\}_{j\in J_i}$ to be the cone consisting of all functions $\Box\to[0,1]$. Since the pairing of any two functions $X\to[0,1]$ is a function $X\to\Box$, the cone $\{f:X\to\Box\}$ is directed. 
\end{proof}

\begin{remark}
In terms of Remark~\ref{gluemeas}, this lemma `explains' why physical measurements are numerical: for every conceivable measurement with values in some arbitrary space $X$, conducting that measurement and recording the outcome in $X$ is equivalent to conducting a sufficient number of measurements with values in $\Box$ and recording their outcomes, which are now plain (complex) numbers.
\end{remark}

\begin{lemma}
If two cones $\{f_i:W\to Y_i\}_{i\in I}$ and $\{g_j:X\to Z_i\}_{j\in J}$ are effective-monic and directed, then so is the product cone
\[
\{ f_i\times g_j : W\times X\to Y_i\times Z_j \}_{(i,j)\in I\times J}.
\]
\label{productcovers}
\end{lemma}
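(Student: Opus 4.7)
The plan is to verify effective-monicity and directedness of the product cone separately, with each property inherited from the corresponding properties of the factor cones.

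For effective-monicity, I apply Lemma~\ref{pointsonly} and check only on points. Suppose $\{(y_{ij},z_{ij})\}_{(i,j)\in I\times J}$ is a compatible family of sections for the product cone. The key first observation is that $y_{ij}$ depends only on $i$: applying the compatibility condition~\eqref{compdiag} to the square formed by $f_i\times g_j$ and $f_i\times g_{j'}$ together with the two projections $p_{Y_i}:Y_i\times Z_j\to Y_i$ and $p_{Y_i}:Y_i\times Z_{j'}\to Y_i$ (which indeed commutes because both composites land on $f_i(w)$) forces $y_{ij}=y_{ij'}$. A symmetric argument with $p_{Z_j}$ gives that $z_{ij}$ depends only on $j$, so I write $y_i\defin y_{ij}$ and $z_j\defin z_{ij}$. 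Next, I check that $\{y_i\}_{i\in I}$ is a compatible family for the cone $\{f_i\}$: given any compatibility square with maps $u:Y_i\to Z$ and $v:Y_{i'}\to Z$ satisfying $uf_i=vf_{i'}$, I take its product with $\id_{Z_j}$ (for some fixed $j$), which produces a compatibility square for the product cone involving $u\times\id_{Z_j}$ and $v\times\id_{Z_j}$, and apply compatibility of $\{(y_{ij},z_{ij})\}$ to conclude $u(y_i)=v(y_{i'})$. The same argument applies symmetrically to $\{z_j\}$. Effective-monicity of the factor cones then produces a unique $w\in W$ with $f_i(w)=y_i$ and a unique $x\in X$ with $g_j(x)=z_j$, and $(w,x)\in W\times X$ is the required unique preimage.

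For directedness, let $\{h^a_i:Y_i\to U^a_i\}_{a\in A_i}$ and $\{l^b_j:Z_j\to V^b_j\}_{b\in B_j}$ be the auxiliary cones witnessing directedness of $\{f_i\}$ and $\{g_j\}$, respectively. On the codomain $Y_i\times Z_j$ of the product cone, I take the auxiliary cone $\{h^a_i\times l^b_j\}_{(a,b)\in A_i\times B_j}$, which separates points since each factor does. Given any data $(i,j),(i',j')\in I\times J$ together with $(a,b)\in A_i\times B_j$ and $(a',b')\in A_{i'}\times B_{j'}$, directedness of $\{f_i\}$ provides some $k\in I$ filling in diagram~\eqref{directeddiag} for the data $(i,i',a,a')$, and directedness of $\{g_j\}$ provides some $m\in J$ doing the same for $(j,j',b,b')$. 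Taking the product of these two diagrams componentwise yields a witness of~\eqref{directeddiag} for the product cone with index $(k,m)\in I\times J$. The main subtlety lies in the first paragraph, where the projections $p_{Y_i}$ and $p_{Z_j}$ must be used as compatibility witnesses in order to disentangle the two coordinates of a compatible family; once the extraction of $\{y_i\}$ and $\{z_j\}$ is justified, the rest of the argument is a matter of unwinding the definitions and taking products of diagrams.
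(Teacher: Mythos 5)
Your proof is correct, and it differs from the paper's in two respects worth noting. First, the paper's proof addresses only directedness, treating effective-monicity of the product cone as implicit; you supply an explicit argument via Lemma~\ref{pointsonly}, using the projections as compatibility witnesses to show that a compatible family of points for the product cone splits into a compatible family $\{y_i\}$ for $\{f_i\}$ and a compatible family $\{z_j\}$ for $\{g_j\}$ --- this is a genuine (if easy) addition that makes the proof more complete. Second, for directedness the paper takes as auxiliary cone on $Y_i\times Z_j$ the \emph{union} of the pulled-back cones, $\{h^k_i p_{Y_i}\}\cup\{k^l_j p_{Z_j}\}$ indexed by $K_i\amalg L_j$ as in~\eqref{productsep}, so that the only nontrivial case of~\eqref{directeddiag} is the mixed one, where the middle arrow can simply be taken to be $f_i\times g_j$ without invoking directedness of the factors; you instead take the \emph{product} cone $\{h^a_i\times l^b_j\}_{(a,b)\in A_i\times B_j}$ and fill~\eqref{directeddiag} by taking the componentwise product of the two witnessing diagrams, which uses directedness of both factors at once. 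Both routes work, with one small caveat against yours: if one factor's auxiliary cone is empty (which Definition~\ref{directeddef} permits when the corresponding $Y_i$ or $Z_j$ has at most one point), your product cone is empty and need not separate the points of $Y_i\times Z_j$, whereas the paper's union cone always separates points; this degenerate case is easily patched, e.g.\ by adjoining the unique map to the one-point space to each auxiliary cone (the extra instances of~\eqref{directeddiag} this creates are satisfied using the original directedness data with $i=i'$), or simply by switching to the union-type cone. A similar empty-index degeneracy ($J=\emptyset$) afflicts the statement itself and the paper's proof equally, so it is not held against you.
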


\begin{proof}
Let $\{h^k_i:Y_i\to U_i^k\}_{k\in K_i}$ and $\{k^l_j:Z_j\to V_j^l\}_{l\in L_j}$ be the families of additional cones that witness the directedness. Then for $(i,j)\in I\times J$, consider the cone at $Y_i\times Z_j$ given by
\be
\label{productsep}
\{ h^k_i p_{Y_i} : Y_i\times Z_j\to U_i^k \} \cup \{ k^l_j p_{Z_j} : Y_i\times Z_j\to V_j^l \}
\ee
with index set $K_i\amalg L_j$. This cone separates the points of $X_i\times Y_j$, since any two different points differ in at least one coordinate. The condition of Definition~\ref{directeddef} is easy to check by distinguishing the cases of the left and the right morphism in~\eqref{directeddiag} belonging to either part of~\eqref{productsep}. The only interesting case that comes up is when one considers a $h^k_i p_{Y_i} : Y_i\times Z_{j'}\to U_i^k$ together with a $k^l_j p_{Z_j} : Y_{i'}\times Z_j\to \times V_j^l$, resulting in a diagram of the form
\[
\xymatrix{						& W\times X \ar[dl]_{f_i\times g_{j'}} \ar[d]|{} \ar[dr]^{f_{i'}\times g_j}							\\
		Y_i\times Z_{j'} \ar[d]_{h_i^k p_{Y_i}}	& \ar[dl] \ar[dr]							& Y_{i'}\times Z_j	\ar[d]^{k_j^l p_{Z_j}}	\\
		U_i^k		&											& V_j^l				}
\]
where indeed the central vertical arrow can be taken to be $f_i\times g_j$.
\end{proof}

In combination with Lemma~\ref{guarcommsquare}, we therefore obtain:

\begin{corollary}
For any $X,Y\in\CHs$, the cone $\{f\times g:X\times Y\to\Box\times\Box\}$ indexed by all functions $f:X\to\Box$ and $g:Y\to\Box$ is directed.
\label{doublesquare}
\end{corollary}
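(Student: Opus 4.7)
The plan is to deduce this corollary as a direct application of Lemma~\ref{productcovers} to two copies of the cone of all $\Box$-valued maps. First I would observe that by Lemma~\ref{squarecover} the cone $\{f:X\to\Box\}_{f}$ is effective-monic, and by Lemma~\ref{guarcommsquare} it is also directed; the same applies with $Y$ in place of $X$. Thus both input hypotheses of Lemma~\ref{productcovers} are satisfied for the pair of cones $\{f:X\to\Box\}_{f}$ and $\{g:Y\to\Box\}_{g}$.

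Applying Lemma~\ref{productcovers} then yields that the product cone
\[
\{\, f\times g : X\times Y \longrightarrow \Box\times\Box \,\}_{f:X\to\Box,\,g:Y\to\Box}
\]
is effective-monic and directed, which is exactly the assertion of the corollary. Since the statement only claims directedness, no further work is required, and there is essentially no obstacle: all the real work has been done in proving Lemmas~\ref{squarecover}, \ref{guarcommsquare}, and \ref{productcovers}. The only thing worth remarking is that the index set of the product cone is indeed the Cartesian product of the two original index sets, matching the formulation of Lemma~\ref{productcovers}. In short, the proof is a one-line citation of the two preceding results.
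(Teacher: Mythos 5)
Your proposal is correct and matches the paper exactly: the corollary is stated there as an immediate consequence of Lemma~\ref{productcovers} applied to the cones $\{f:X\to\Box\}$ and $\{g:Y\to\Box\}$, which are effective-monic by Lemma~\ref{squarecover} and directed by Lemma~\ref{guarcommsquare}. Your explicit mention of both hypotheses (effective-monic and directed) is exactly the right bookkeeping; nothing further is needed.
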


Another simple class of examples is as follows:

\begin{lemma}
Let $\{f_i : X\to Y_i\}_{i\in I}$ be an effective-monic cone on $X\in\CHs$. Then the cone
\[
\left\{ (f_{i_1},\ldots,f_{i_n}) : X\to Y_{i_1}\times\ldots\times Y_{i_n} \right\}
\]
consisting of all finite tuplings of the $f_i$ is effective-monic and directed.
\label{Sinfty}
\end{lemma}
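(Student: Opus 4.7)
The plan is to derive both halves of the conclusion by reducing them to the corresponding properties of the cone $\{f_i\}_{i\in I}$ itself. Throughout I write $\vec{i}=(i_1,\ldots,i_n)$ for a finite tuple of indices from $I$, $Y_{\vec{i}}\defin Y_{i_1}\times\cdots\times Y_{i_n}$, and $f_{\vec{i}}\defin(f_{i_1},\ldots,f_{i_n}):X\to Y_{\vec{i}}$, so that the tupling cone consists of the $f_{\vec{i}}$. The key observation is that for every coordinate $j$ one has $p_{Y_{i_j}}\circ f_{\vec{i}}=f_{i_j}$, so the tupling cone both contains the original cone (via $1$-tuples) and is tied to it via the product projections.

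For effective-monicity I would use the universal characterization: a cone in $\CHs$ is effective-monic iff every compatible family $\{h_i:W\to Y_i\}$ from any test object $W$ factors uniquely through the vertex. Given a compatible family $\{h_{\vec{i}}:W\to Y_{\vec{i}}\}$ for the tupling cone, extract its restriction $\{h_{(i)}:W\to Y_i\}$ to $1$-tuples; compatibility is inherited, since every compatibility square for $\{f_i\}$ lifts trivially to one for the tupling cone. By hypothesis the restricted family factors uniquely through some $g:W\to X$. To finish I would check that $f_{\vec{i}}\circ g=h_{\vec{i}}$ for every $\vec{i}$: both morphisms have $j$-th coordinate equal to $h_{(i_j)}$ — the left-hand side by construction of $g$, and the right-hand side by applying the compatibility of the given family to the pair $(f_{\vec{i}},f_{(i_j)})$ linked by the projection $p_{Y_{i_j}}$ — so the universal property of $Y_{\vec{i}}$ as a product forces the two to coincide.

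For directedness I would equip each $Y_{\vec{i}}$ with the separating cone of its own coordinate projections $\{p_{Y_{i_j}}:Y_{\vec{i}}\to Y_{i_j}\}_{j=1,\ldots,n}$, which is jointly injective since points of a finite product are determined coordinatewise. To satisfy the compatibility clause of Definition~\ref{directeddef}, given two tuples $\vec{i},\vec{i}\,'$ with chosen coordinates $j$ and $j'$, I would take the witnessing tuple to be $\vec{k}\defin(i_j,i'_{j'})$, so that $Y_{\vec{k}}=Y_{i_j}\times Y_{i'_{j'}}$ and $f_{\vec{k}}=(f_{i_j},f_{i'_{j'}})$, and use the two coordinate projections of $Y_{\vec{k}}$ as the two downward arrows $Y_{\vec{k}}\to Y_{i_j}$ and $Y_{\vec{k}}\to Y_{i'_{j'}}$ in diagram~\eqref{directeddiag}. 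Commutativity of every triangle in that diagram then follows directly from the universal property of products.

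I do not foresee any serious obstacle; the only point requiring a modicum of care is the equality $f_{\vec{i}}\circ g=h_{\vec{i}}$ in the first half, which is precisely where the presence of the product projections in the enlarged cone pulls its weight, by allowing compatibility of the given family to be lifted from $1$-tuples back up to arbitrary tuples.
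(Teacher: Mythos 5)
Your proof is correct and follows essentially the same route as the paper: effective-monicity is reduced to the original cone by restricting to $1$-tuples and using compatibility against the product projections (exactly the paper's displayed square) to recover the higher tuples coordinatewise, and directedness is immediate from closure under tupling. The only cosmetic difference is in the directedness witness: the paper takes the trivial cone $\{\id\}$ on each codomain and concatenates tuples, whereas you take the coordinate-projection cones and pair tuples; both verify Definition~\ref{directeddef} directly.
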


Alternatively, we could phrase this as saying that if an effective-monic cone is closed under pairing, then it is directed.

\begin{proof}
Mapping points of $X$ to compatible families of points in all finite products $\prod_{m=1}^n Y_{i_m}$ is trivially injective, since it already is so on single-factor products due to the effective-monic assumption. Concerning surjectivity, the compatibility assumption guarantees that the component $(y_1,\ldots,y_n)\in Y_{i_1}\times\ldots\times Y_{i_n}$ is uniquely determined by the components in every individual $y_i$, since this is precisely the compatibility condition on diagrams of the form
\[
\xymatrix{	X \ar[d]_{f_{i_m}} \ar[rr]^(.4){(f_{i_1},\ldots,f_{i_n})}	&& \mathlarger{\mathlarger{\prod}}_{m=1}^n Y_{i_m} \ar[d]^{p_m}	\\
		Y_{i_m} \ar@{=}[rr]					&& Y_{i_m}				}
\]
Hence the new cone is also effective-monic.

The condition of Definition~\ref{directeddef} holds by construction, with the trivial cone $\{\id\}$ on the codomains.
\end{proof}

Next, we briefly investigate the collection of directed effective-monic cones in its entirety.

\begin{proposition}
\label{nocoverage}
The collection of all directed effective-monic cones on $\CHs$ is not a coverage.
\end{proposition}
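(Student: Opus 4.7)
My plan is to exhibit a directed effective-monic cone together with a morphism out of its apex for which the coverage refinement fails, even when we allow the refining cone to be directed effective-monic. For the original cone I take $\{p_n : \cantorspace \to \mathbf{2}^n\}_{n \in \Nl}$, the canonical cofiltered family of finite-coordinate projections from the Cantor space, which is effective-monic and directed by Example~\ref{cofiltered}. For the morphism I take the binary expansion $h : \cantorspace \to [0,1]$, $h(x_1, x_2, \ldots) = \sum_{i \geq 1} x_i 2^{-i}$, a continuous surjection.

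Assume for contradiction that there is a directed effective-monic cone $\{k_j : [0,1] \to Y_j\}_{j \in J}$ such that every composite $k_j h$ factors through some $p_{n_j}$, say $k_j h = m_j \circ p_{n_j}$. Then $k_j h$ depends only on the first $n_j$ binary coordinates. For any string $(a_1,\ldots,a_{n_j}) \in \mathbf{2}^{n_j}$, the cylinder of sequences beginning with this string is mapped by $h$ surjectively onto the closed dyadic interval $[\sum_{i \leq n_j} a_i 2^{-i},\, \sum_{i \leq n_j} a_i 2^{-i} + 2^{-n_j}]$, so $k_j$ must be constant on this interval. The $2^{n_j}$ such intervals cover $[0,1]$ and consecutive ones share an endpoint, so continuity of $k_j$ forces it to be globally constant on $[0,1]$.

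With every $k_j$ constant, the family $\{k_j\}$ cannot separate points of $[0,1]$ and hence cannot even be effective-monic, let alone directed effective-monic. This contradicts the assumed existence of the refining cone, showing that directed effective-monic cones do not form a coverage.

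The main obstacle is the constancy argument, which relies on the non-uniqueness of binary expansions of dyadic rationals---this is exactly what makes the cylinder images into overlapping \emph{closed} intervals---together with connectedness of $[0,1]$. The moral is that any directed effective-monic cover of $[0,1]$ would have to probe it with arms whose pullbacks under $h$ depend on only finitely many digits, which in a connected target collapses every such arm to a constant; this is a strictly more drastic failure than the combinatorial one in Example~\ref{ex4to3}, since it persists after closing the original cone under finite tuplings as in Lemma~\ref{Sinfty}.
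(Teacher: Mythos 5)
Your proof is correct. The coverage axiom you test is the right one for this covariant setting (for a cone $\{f_i:X\to Y_i\}$ in the collection and a map $h:X\to X'$, one needs a cone on $X'$ in the collection whose composites with $h$ factor through some $f_i$), your cone $\{p_n:\cantorspace\to\mathbf{2}^n\}$ is indeed directed and effective-monic by Example~\ref{cofiltered}, and the forced-constancy argument is sound: if $k_j h$ factors through $p_{n_j}$ then it is constant on each length-$n_j$ cylinder, $h$ maps that cylinder onto a closed dyadic interval of length $2^{-n_j}$, and since consecutive intervals share an endpoint the constants propagate (continuity is not even needed), so every $k_j$ is constant and the refining cone cannot be jointly injective, hence not effective-monic.

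Your route differs from the paper's in the choice of counterexample, though the overall strategy --- pick a directed effective-monic cone and a quotient-like map out of its apex, then show any refinement is forced to consist of constant maps and so cannot separate points --- is the same. The paper works entirely with finite spaces: the cone of the three pairings $(p_1,p_2),(p_1,p_3),(p_2,p_3):\{0,1\}^3\to\{0,1\}^2$ and the digit-sum map $\{0,1\}^3\to\mathbf{4}$, where constancy of any candidate $h:\mathbf{4}\to Z$ is extracted by chaining equalities across adjacent vertices of the cube; this is purely combinatorial and exhibits the obstruction at the smallest possible scale, refining the partial failure already seen in Example~\ref{ex4to3}. Your version uses the infinite cofiltered cone on the Cantor space and the binary-expansion surjection onto $[0,1]$, with the non-uniqueness of dyadic expansions supplying the overlap of the closed intervals that glues the local constants together. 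What your approach buys is the observation that the failure of the coverage axiom is not an artifact of carefully engineered finite cones: it already afflicts the canonical limit cone of a cofiltered diagram (the prototypical directed effective-monic cone from Example~\ref{cofiltered}) when pushed along a natural quotient map onto a connected space, and, as you note, it is robust under closing the cone under finite tuplings as in Lemma~\ref{Sinfty}. The paper's example has the advantage of being finite and checkable by hand in one line per composite; yours is slightly heavier analytically but arguably more illuminating about where the obstruction comes from.
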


Results along the lines of~\cite[Theorem~1.1]{reyes} indicate that this is not due to the potential inadequacy of our definitions, but rather due to fundamental obstructions related to the noncommutativity.

\begin{proof}
Consider $X\defin\{0,1\}^3$ with the three product projections $p_1,p_2,p_3:\{0,1\}^3\to\{0,1\}$. By reasoning analogous to the proof of Lemma~\ref{squarecover}, their three pairings
\be
\label{facecone}
\left\{\:(p_1,p_2),\: (p_1,p_3),\: (p_2,p_3) \: : \: \{0,1\}^3\longrightarrow\{0,1\}^2 \:\right\}
\ee
form an effective-monic cone. By reasoning analogous to the proof of Lemma~\ref{guarcommsquare}, this cone is directed.

Now consider the function $f:\{0,1\}^3\to\mathbf{4}$ defined by mapping every element of $\{0,1\}^3$ to the sum of its digits. In any square of the form
\[
\xymatrix{	\{0,1\}^3 \ar[r]^{(p_1,p_2)} \ar[d]_f	& \{0,1\}^2 \ar[d]^g	\\
		\mathbf{4} \ar[r]_h			& Z			}
\]
we necessarily have
\[
\underbrace{h(f(000))}_{=h(0)} = g(00) = h(f(001)) = h(f(010)) = g(01) = \underbrace{h(f(010))}_{=h(1)} = \ldots = \underbrace{h(f(110))}_{=h(2)} = \ldots = \underbrace{h(f(111))}_{=h(3)},
\]
and therefore $h$ must be constant. By symmetry, the same must hold with $(p_1,p_3)$ or $(p_2,p_3)$ in place of $(p_1,p_2)$. Hence any cone on $\mathbf{4}$ that factors through~\eqref{facecone} must identify \emph{all} points of $\mathbf{4}$. In particular, no such cone can even be effective-monic, let alone directed.
\end{proof}

We close this subsection with another potential criterion for guaranteeing commutativity. This is not relevant for the remainder of the paper.

\begin{lemma}
The following conditions on a cone $\{f_i:X\to Y_i\}$ in $\CHs$ are equivalent:
\begin{enumerate}
\item\label{lfpoint} For every $x\in X$ and neighbourhood $U\ni x$ there exists $i\in I$ with
\[
f_i^{-1}(f_i(x)) \subseteq U.
\]
\item\label{lfngbhd} For every $x\in X$ and neighbourhood $U\ni x$ there exist $i\in I$ and a neighbourhood $V\ni f_i(x)$ with
\[
f_i^{-1}(V) \subseteq U.
\]
\item\label{lfbasis} The sets of the form $f_i^{-1}(V)$ for open $V\subseteq Y_i$ form a basis for the topology on $X$.
\end{enumerate}
\end{lemma}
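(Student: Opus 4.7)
The plan is to prove (b) $\Leftrightarrow$ (c) directly from the definition of a basis, deduce (b) $\Rightarrow$ (a) as an immediate consequence, and then establish (a) $\Rightarrow$ (b) using the fact that continuous maps between compact Hausdorff spaces are closed.

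First I would observe that (b) and (c) are just two reformulations of the same condition. Indeed, a family of open sets is a basis for the topology on $X$ if and only if every open $U \subseteq X$ and every $x \in U$ admit a basis element $W$ with $x \in W \subseteq U$. Applying this to the sets of the form $f_i^{-1}(V)$ yields precisely condition~\ref{lfngbhd}, so (b) $\Leftrightarrow$ (c) is immediate.

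Next, (b) $\Rightarrow$ (a) is essentially trivial: if $f_i(x) \in V$ and $f_i^{-1}(V) \subseteq U$, then
\[
f_i^{-1}(f_i(x)) = f_i^{-1}(\{f_i(x)\}) \subseteq f_i^{-1}(V) \subseteq U.
\]

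The main step is (a) $\Rightarrow$ (b), which is where the compact Hausdorff hypothesis enters. Given $x \in X$ and an open neighbourhood $U \ni x$, condition~\ref{lfpoint} provides some $i \in I$ with $f_i^{-1}(f_i(x)) \subseteq U$. The complement $X \setminus U$ is closed in the compact space $X$, hence compact, so its image $f_i(X \setminus U)$ is compact in the Hausdorff space $Y_i$ and therefore closed. Moreover $f_i(x) \notin f_i(X \setminus U)$: otherwise some $x' \in X \setminus U$ would satisfy $f_i(x') = f_i(x)$, giving $x' \in f_i^{-1}(f_i(x)) \subseteq U$, a contradiction. Setting $V \defin Y_i \setminus f_i(X \setminus U)$ yields an open neighbourhood of $f_i(x)$ with $f_i^{-1}(V) \cap (X \setminus U) = \emptyset$, i.e.\ $f_i^{-1}(V) \subseteq U$, proving~\ref{lfngbhd}.

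There is no real obstacle here; the only subtle point is the final step, where one must remember that continuous maps out of compact spaces into Hausdorff spaces are closed, in order to rule out $f_i(x)$ lying in the image $f_i(X \setminus U)$.
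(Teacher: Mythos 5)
Your proposal is correct and rests on essentially the same ideas as the paper's proof: the only substantive step, (a) $\Rightarrow$ (b), is handled by exactly the same compactness argument (the image of the closed set $X\setminus U$ under $f_i$ is closed and misses $f_i(x)$, so its complement serves as $V$), while the remaining implications are routine in both treatments. The only difference is organizational --- the paper runs the cycle (a) $\Rightarrow$ (b) $\Rightarrow$ (c) $\Rightarrow$ (a), whereas you prove (b) $\Leftrightarrow$ (c) directly from the definition of a basis and add (b) $\Rightarrow$ (a); this changes nothing of substance.
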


\begin{proof}
\begin{enumerate}
\item[\implproof{lfpoint}{lfngbhd}] Since $X\setminus U$ is compact, $f_i(X\setminus U)$ is a closed set, and disjoint from $\{x\}$ by assumption. Now take $V$ to be any open neighbourhood of $f_i(x)$ disjoint from $f_i(X\setminus U)$.
\item[\implproof{lfngbhd}{lfbasis}] Suppose $x\in f_i^{-1}(V_i)\cap f_j^{-1}(V_j)$. Then by assumption, there is $k$ and an open $V_k\subseteq Y_k$ with $f_k(x)\in V_k$ such that
\[
f_k^{-1}(V_k) \subseteq f_i^{-1}(V_i)\cap f_j^{-1}(V_j).
\]
\item[\implproof{lfbasis}{lfpoint}] There must be a basic open $f_i^{-1}(V_i)$ with $x\in f_i^{-1}(V_i)\subseteq U$.\qedhere
\end{enumerate}
\end{proof}

\begin{definition}
If the above conditions hold, we say that the cone $\{f_i\}$ is \emph{locally injective}.
\end{definition}

Clearly, a locally injective cone separates points. However, it is not necessarily effective-monic:

\begin{example}
The cone consisting of all three surjective functions $\mathbf{3}\to \mathbf{2}$ is locally injective. However, it is not effective-monic: the pushout of any two different maps $\mathbf{3}\to\mathbf{2}$ is trivial, and hence there are $2^3$ compatible families of points in the cone, but only $3$ points in $X$.
\end{example}

\begin{example}
The cone $\{p_\Re,p_\Im\}$ from Example~\ref{circleproj} is not locally injective: for any angle $0<\varphi<\pi/2$, the point $(\cos\varphi,\sin\varphi)\in \Tl$ cannot be distinguished from $(\cos\varphi,-\sin\varphi)\in \Tl$ under $p_\Re$, and not from $(-\cos\varphi,\sin\varphi)$ under $p_\Im$. 
\end{example}

\begin{conjecture}
\label{linjconj}
An effective monic cone $\{f_i:X\to Y_i\}$ that is locally injective is also guaranteed commutative.
\end{conjecture}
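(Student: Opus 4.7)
The plan is to construct, from a compatible family $\{\beta_i : C(Y_i) \to A\}$, a $*$-homomorphism $\alpha : C(X) \to A$ with $\alpha \circ C(f_i) = \beta_i$ for every $i$. Once such an $\alpha$ exists, the ranges of the $\beta_i$ all lie inside the commutative subalgebra $\im(\alpha) \subseteq A$, and Lemma~\ref{gclem} then yields that the cone is guaranteed commutative.

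First, I would apply the compatibility condition to the diagram
\[
\xymatrix{ X \ar[r]^{f_i} \ar[d]_{f_j} & Y_i \ar[d]^g \\ Y_j \ar[r]_h & \Cl }
\]
to deduce $\beta_i(g) = \beta_j(h)$ whenever $g \circ f_i = h \circ f_j$ in $C(X)$. This makes $\alpha_0(g \circ f_i) \defin \beta_i(g)$ a well-defined $*$-preserving, norm-nonexpansive assignment from the union $B \defin \bigcup_i \im(C(f_i)) \subseteq C(X)$ into $A$. Since local injectivity implies separation of points by the $f_i$, Lemma~\ref{seppoints} tells us that $B$ generates $C(X)$ as a $*$-algebra, so the task reduces to extending $\alpha_0$ to a $*$-homomorphism on $C(X)$.

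Next, I would use local injectivity in its basis form: every open cover of $X$ admits a refinement by basic opens $f_{i_m}^{-1}(V_m)$. Combined with the Tietze extension theorem on each $Y_{i_m}$, this yields a subordinate continuous partition of unity $\{\chi_m\}$ on $X$ with each $\chi_m = \tilde\chi_m \circ f_{i_m}$ for some $\tilde\chi_m \in C(Y_{i_m})$ supported in $V_m$, so each $\chi_m$ itself lies in $B$. Choosing the $V_m$ small enough that the $f_{i_m}$-fibers meeting $f_{i_m}^{-1}(V_m)$ have small diameter, for any $\phi \in C(X)$ the piece $\chi_m \phi$ becomes uniformly approximable by an element $g_m \circ f_{i_m} \in B$. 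The natural candidate for the extension is then $\alpha(\phi) \defin \lim \sum_m \beta_{i_m}(g_m)$.

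The hardest part, and the step I expect to be the main obstacle, is verifying that this $\alpha$ is well-defined and multiplicative. Both ultimately reduce to a local-to-global principle: whenever two finite sums $\sum_m g_m \circ f_{i_m} = \sum_n h_n \circ f_{j_n}$ agree in $C(X)$ after being expressed relative to a sufficiently fine cover by basic opens, one must have $\sum_m \beta_{i_m}(g_m) = \sum_n \beta_{j_n}(h_n)$ in $A$. I would attempt this by induction on the number of terms, using local injectivity at each stage to refine the cover until two summands can be combined via the binary compatibility already at hand. The delicate point is controlling the identifications in the pushouts $Y_i \pushout{f_i}{f_j} Y_j$ that arise from long zigzags: such zigzags are precisely what obstructs the present cone from being directed in the sense of Definition~\ref{directeddef}, and handling them is where local injectivity, as opposed to full directedness, must really pay off.
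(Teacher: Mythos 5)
You should first note that the paper does not prove this statement at all: it is stated as Conjecture~\ref{linjconj} and explicitly left open (the surrounding text only checks consistency with examples, e.g.\ that the non--guaranteed-commutative cone of Example~\ref{circleproj} is not locally injective). So there is no proof in the paper to compare against, and your proposal should be judged as an attempt to settle an open problem.

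As such, it has a genuine gap rather than a complete argument. Your strategy --- build $\alpha:C(X)\to A$ directly from the compatible family by patching with a partition of unity $\chi_m=\tilde\chi_m\circ f_{i_m}$ subordinate to basic opens, then invoke Lemma~\ref{gclem} --- correctly identifies where local injectivity should enter, and the first step (well-definedness of $\alpha_0$ on $B=\bigcup_i\im(C(f_i))$ via binary compatibility, with $\Cl$ replaced by a compact codomain) is fine. But the step you flag as ``the main obstacle'' is exactly the open content of the conjecture, and it is not merely delicate: it is not even clear that your candidate $\alpha$ is well-defined at the very first stage. Pairwise compatibility through pushouts does not obviously give the \emph{multi-index} additivity you need, e.g.\ it does not follow that $\sum_m\beta_{i_m}(\tilde\chi_m)=1$ in $A$ when $\sum_m\tilde\chi_m\circ f_{i_m}=1$ in $C(X)$ with the $\tilde\chi_m$ pulled back along \emph{different} $f_{i_m}$; without such identities there is no norm control guaranteeing that $\lim\sum_m\beta_{i_m}(g_m)$ exists, is independent of the choice of cover and lifts $g_m$, and is additive and multiplicative --- and multiplicativity of $\alpha$ already forces the commutativity of the ranges, so establishing it is essentially equivalent to the conjecture itself. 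Likewise, your inductive plan to ``combine two summands via the binary compatibility'' would require the two summands to be realized over a common $Y_k$, which is precisely the directedness (Definition~\ref{directeddef}) that is missing here; local injectivity refines open covers of $X$ but does not by itself produce such common refinements of the morphisms. So the proposal is a plausible plan of attack, but the conjecture remains unproven by it.
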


Since the cone of all functions $X\to\Box$ is an effective-monic and locally injective cone, proving this conjecture would again show that $\{f:X\to\Box\}$ is guaranteed commutative. Furthermore, this would detect some cones as guaranteed commutative that are not detected as such by Proposition~\ref{guarcommcrit}: the effective-monic cone of Examples~\ref{ex4to3} and~\ref{exgc} is one of these.

\begin{example}
In the setting of Example~\ref{cofiltered}, the topology of $\lim_\Lambda L$ is generated by the preimages of opens in all the $L(\lambda)$. The cofilteredness assumption implies that these opens form a basis: for $U_\lambda \subseteq L(\lambda)$ and $U_{\lambda'}\subseteq L(\lambda')$, we have $\hat{\lambda}$ and morphisms $f:\hat{\lambda}\to\lambda$ and $f':\hat{\lambda}\to\lambda'$ such that
\[
\xymatrix{ 			& \lim_\Lambda L \ar[dl]_{p_\lambda} \ar[d]|{p_{\hat{\lambda}}} \ar[dr]^{p_{\lambda'}} 				\\
		L(\lambda)	& L(\hat{\lambda}) \ar[l]^{L(f)} \ar[r]_{L(f')}							& L(\lambda')	}
\]
commutes. In particular, $f^{-1}(U_\lambda)\cap f'^{-1}(U_{\lambda'})$ is an open in $L(\hat{\lambda})$ whose preimage in $\lim_\Lambda L$ is exactly the intersection of the preimages of $U_\lambda$ and $U_{\lambda'}$. Hence the limit cone $\{p_\lambda\}$ is also locally injective. By Example~\ref{cofiltered}, this is in accordance with Conjecture~\ref{linjconj}.
\end{example}

Similar to the situation with Proposition~\ref{guarcommcrit}, being locally injective is also not a necessary condition for guaranteeing commutativity:

\newcommand{\cube}{\text{\mancube}}

\begin{example}
There are effective-monic cones that are directed and hence guaranteed commutative, but not locally injective. For example with $\cube\defin[0,1]^3$ the unit cube, the three face projections $p_1,p_2,p_3 : \cube\to\Box$ form a cone $\{p_1,p_2,p_3\}$ that is effective-monic but not locally injective. Nevertheless, considering copies of the cone $\{p_\Re,p_\Im:\Box\to [0,1]\}$ in Definition~\ref{directeddef} shows that the cone is directed, and hence guaranteed commutative. In particular, the converse to Conjecture~\ref{linjconj} is false.
\end{example}

\subsection*{The category of sheaves and its smallness properties}

Now that we have some idea of which sheaf conditions are satisfied by C*-algebras, we investigate completely general functors $\CHs\to\Sets$ satisfying (some of) these sheaf conditions.

\begin{definition}
A functor $F:\CHs\to\Sets$ is a \emph{sheaf} if it satisfies the sheaf condition on all effective-monic cones that are directed.
\label{sheafdef}
\end{definition}

We write $\Sh(\CHs)$ for the resulting category of sheaves, which is a full subcategory of $\Sets^\CHs$. Due to Proposition~\ref{nocoverage}, the sheaf conditions are \emph{not} those of a (large) site. Nevertheless, we expect that $\Sh(\CHs)$ is an instance of a category of sheaves on a \emph{quasi-pretopology} or on a \emph{Q-category}, whose categories of sheaves were investigated by Kontsevich and Rosenberg in the context of noncommutative algebraic geometry~\cite{KR1,KR2}\footnote{It is natural to suspect that the reason for why Grothendieck topologies do not apply is in both cases due to the noncommutativity, as has been formally proven in~\cite{reyes}. However, so far we have not explored the relation to the work of Kontsevich and Rosenberg any further.}.

A priori, $\Sh(\CHs)$ may seem rather unwieldy, and it is not even clear whether it is locally small.

\begin{lemma}
\label{evalbox}
Let $F,G\in\Sh(\CHs)$. Evaluating natural transformations on $\Box$ is injective,
\[
\xymatrix{	\Sh(\CHs)(F,G) \: \ar@{^{(}->}[r]	& \: \Sets(F(\Box),G(\Box)).	}
\]
\end{lemma}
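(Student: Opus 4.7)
The plan is to show that for every $X\in\CHs$, the component $\eta_X:F(X)\to G(X)$ of a natural transformation $\eta:F\to G$ is completely determined by its component $\eta_\Box$, so that two natural transformations agreeing on $\Box$ must agree everywhere.

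First, I would invoke Lemma~\ref{squarecover} to obtain that the cone $\{f:X\to\Box\}_{f:X\to\Box}$ of all continuous functions from $X$ to $\Box$ is effective-monic, and then Lemma~\ref{guarcommsquare} together with Proposition~\ref{guarcommcrit} to conclude that this cone is also directed. Since $F$ and $G$ lie in $\Sh(\CHs)$, they satisfy the sheaf condition on this cone by Definition~\ref{sheafdef}. Therefore $F(X)$ and $G(X)$ are each identified with the set of \emph{compatible} families indexed by all $f:X\to\Box$, with values in $F(\Box)$ and $G(\Box)$ respectively; explicitly, an element $\alpha\in F(X)$ corresponds to the family $\{F(f)(\alpha)\}_{f:X\to\Box}$, and analogously for $G(X)$.

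Next, I would use naturality to compute the image of $\alpha\in F(X)$ under $\eta_X$. For every $f:X\to\Box$, the naturality square
\[
\xymatrix{F(X)\ar[r]^{F(f)}\ar[d]_{\eta_X} & F(\Box)\ar[d]^{\eta_\Box}\\
G(X)\ar[r]_{G(f)} & G(\Box)}
\]
yields $G(f)(\eta_X(\alpha))=\eta_\Box(F(f)(\alpha))$. Hence the compatible family in $G(\Box)$ representing $\eta_X(\alpha)$ is $\{\eta_\Box(F(f)(\alpha))\}_{f:X\to\Box}$, which depends only on $\alpha$ and on the component $\eta_\Box$.

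Consequently, if $\eta,\eta':F\to G$ satisfy $\eta_\Box=\eta'_\Box$, then for each $\alpha\in F(X)$ both $\eta_X(\alpha)$ and $\eta'_X(\alpha)$ are represented by the same compatible family in $G(\Box)$. By the injectivity part of the sheaf condition for $G$ on $\{f:X\to\Box\}$, this forces $\eta_X(\alpha)=\eta'_X(\alpha)$. Since $X$ and $\alpha$ were arbitrary, we conclude $\eta=\eta'$, establishing the claimed injectivity. There is no genuine obstacle here; the only substantive ingredient is the availability of the sheaf condition on the cone of all maps into $\Box$, which is exactly what Lemmas~\ref{squarecover} and~\ref{guarcommsquare} were set up to provide.
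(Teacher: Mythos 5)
Your proof is correct and follows essentially the same route as the paper: use the sheaf condition on the effective-monic, directed cone $\{f:X\to\Box\}$ (Lemmas~\ref{squarecover} and~\ref{guarcommsquare}) to get injectivity of $G(X)\to\prod_{f:X\to\Box}G(\Box)$, then naturality shows each $\eta_X$ is determined by $\eta_\Box$. (The appeal to Proposition~\ref{guarcommcrit} is superfluous—directedness comes from Lemma~\ref{guarcommsquare} alone—but this does not affect the argument.)
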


\begin{proof}
Since $F$ and $G$ satisfy the sheaf condition on $\{f:X\to\Box\}$ by Corollary~\ref{guarcommsquare}, the canonical map
\[
\xymatrix{	F(X) \ar[r]	& \mathlarger{\mathlarger{\prod}}_{f:X\to\Box} F(\Box)	}
\]
is injective. Hence for any $\eta:F\to G$, the naturality diagram
\[
\xymatrix{	F(X) \ar[r] \ar[d]_{\eta_X}	& \mathlarger{\mathlarger{\prod}}_{f:X\to\Box} F(\Box) \ar[d]^{\prod_f \eta_\Box}	\\
		G(X) \ar[r]			& \mathlarger{\mathlarger{\prod}}_{f:X\to\Box} G(\Box)				}
\]
shows that every component $\eta_X$ is uniquely determined by $\eta_\Box$.
\end{proof}

\begin{corollary}
$\Sh(\CHs)$ is locally small.
\end{corollary}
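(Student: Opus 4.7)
The plan is to derive the corollary as an essentially immediate consequence of Lemma~\ref{evalbox}. Local smallness of $\Sh(\CHs)$ means that for every pair of objects $F, G \in \Sh(\CHs)$, the hom-class $\Sh(\CHs)(F,G)$ is actually a set rather than a proper class. A priori this is not clear, since natural transformations between functors $\CHs \to \Sets$ are families indexed by the proper class of objects of $\CHs$.

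To handle this, I would simply invoke Lemma~\ref{evalbox}, which provides the injection
\[
\Sh(\CHs)(F,G) \: \hookrightarrow \: \Sets(F(\Box), G(\Box)).
\]
Since $F(\Box)$ and $G(\Box)$ are sets (the functors take values in $\Sets$), the codomain $\Sets(F(\Box), G(\Box))$ is a set. Any subclass that admits an injection into a set is itself (in bijection with) a set. Hence $\Sh(\CHs)(F,G)$ is a set, proving local smallness.

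There is no real obstacle here, since the hard work has already been done in Lemma~\ref{evalbox}: the key input was that Corollary~\ref{doublesquare} (via Lemma~\ref{guarcommsquare}) supplied a directed effective-monic cone of the form $\{f : X \to \Box\}_{f : X \to \Box}$ at every object $X$, which forces sheaves to be determined by their value at $\Box$. The corollary simply packages the resulting bound on morphisms as a set-theoretic size statement.
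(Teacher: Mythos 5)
Your proof is correct and is essentially the paper's own argument: the corollary follows immediately from the injection of Lemma~\ref{evalbox} into the set $\Sets(F(\Box),G(\Box))$, which bounds each hom-class by a set. (One minor aside: the input to Lemma~\ref{evalbox} is the cone $\{f:X\to\Box\}$ being effective-monic and directed, i.e.\ Lemmas~\ref{squarecover} and~\ref{guarcommsquare}, rather than the product cone of Corollary~\ref{doublesquare}, but this does not affect your proof of the corollary itself.)
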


\begin{proof}
Lemma~\ref{evalbox} provides an upper bound on the size of each hom-set. 
\end{proof}

With functors $-(A)$ and $-(B)$ for $A,B\in\Calg$ in place of $F$ and $G$, Lemma~\ref{evalbox} also follows from the Yoneda lemma and the fact that $C(\Box)$ is a separator in $\Calg$. The latter is true more generally:

\begin{corollary}
$-(C(\Box))$ is a separator in $\Sh(\CHs)$.
\label{locsmall}
\end{corollary}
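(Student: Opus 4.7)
The plan is to deduce this immediately from Lemma~\ref{evalbox} via the Yoneda lemma. First, I would unpack the functor $-(C(\Box))$: by Gelfand duality, for any $X\in\CHs$ we have $X(C(\Box))=\Calg(C(X),C(\Box))\cong\CHs(\Box,X)$, naturally in $X$. Hence $-(C(\Box))$ is (isomorphic to) the representable functor $\CHs(\Box,-)$.

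Next, I would invoke the Yoneda lemma in the ambient functor category $\Sets^\CHs$: for any $F:\CHs\to\Sets$, natural transformations $\CHs(\Box,-)\to F$ correspond bijectively to elements of $F(\Box)$, where an element $x\in F(\Box)$ corresponds to the transformation $\xi^x$ whose component at $X$ sends $f:\Box\to X$ to $F(f)(x)\in F(X)$. Moreover, post-composition with any natural transformation $\eta:F\to G$ takes $\xi^x$ to $\xi^{\eta_\Box(x)}$.

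To verify the separator property, let $\eta,\eta':F\to G$ be two distinct natural transformations in $\Sh(\CHs)$. By Lemma~\ref{evalbox}, the restriction map $\Sh(\CHs)(F,G)\to\Sets(F(\Box),G(\Box))$ sending $\eta\mapsto\eta_\Box$ is injective, so $\eta_\Box\neq\eta'_\Box$; pick $x\in F(\Box)$ with $\eta_\Box(x)\neq\eta'_\Box(x)$. Then the two composites $\eta\circ\xi^x,\eta'\circ\xi^x:-(C(\Box))\to G$ correspond respectively to $\eta_\Box(x)$ and $\eta'_\Box(x)$, hence are distinct. Since this produces, for every pair of distinct parallel morphisms in $\Sh(\CHs)$, a morphism out of $-(C(\Box))$ on which they disagree, $-(C(\Box))$ is a separator.

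There is essentially no obstacle here: the real work was done in Lemma~\ref{evalbox}, whose proof exploited the sheaf condition on the cone $\{f:X\to\Box\}$ provided by Corollary~\ref{guarcommsquare}. The present corollary is just the Yoneda-theoretic reformulation of that injectivity statement.
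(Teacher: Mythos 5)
Your proof is correct and follows essentially the same route as the paper: identify $-(C(\Box))$ with the representable $\CHs(\Box,-)$, apply the Yoneda lemma (which works in $\Sh(\CHs)$ since it is a full subcategory of $\Sets^\CHs$), and use the injectivity of evaluation at $\Box$ from Lemma~\ref{evalbox} to separate distinct parallel morphisms. The paper states this in one line via equation~\eqref{yonedaeq}; you have merely spelled out the same argument in more detail.
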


Recall that as functors $\CHs\to\Sets$, we have $-(C(\Box)) \cong \CHs(\Box,-)$.

\begin{proof}
By the Yoneda lemma,
\be
\label{yonedaeq}
\Sh(\CHs)(-(C(\Box)),F) = \Sets^\CHs(\CHs(\Box,-),F) = F(\Box),
\ee
and hence the claim follows from Lemma~\ref{evalbox}.
\end{proof}

The following stronger injectivity property will play a role in the next section:

\begin{lemma}
For $F\in\Sh(\CHs)$, the following are equivalent:
\begin{enumerate}
\item\label{boxinj} The canonical map
\be
\label{doublebox}
\xymatrix{	(F(p_1),F(p_2)) \: :\: F(\Box\times\Box) \ar[r]	& F(\Box)\times F(\Box)	}
\ee
is injective.
\item\label{allinj} For every $X\in\CHs$ and effective-monic $\{f_i:X\to Y_i\}$, the canonical map
\[
\xymatrix{	F(X) \ar[r]	& \mathlarger{\mathlarger{\prod}}_{i\in I} F(Y_i)	}
\]
is injective.
\end{enumerate}
\label{injlem}
\end{lemma}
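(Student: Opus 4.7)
The direction \ref{allinj}$\Rightarrow$\ref{boxinj} is immediate: the cone $\{p_1,p_2 : \Box\times\Box\to\Box\}$ is effective-monic, either by Lemma~\ref{malcev} or simply as the limit cone of the product (Example~\ref{limit}), so applying the hypothesis to it yields~\eqref{doublebox}.

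For the converse, the plan is to first promote the injectivity of~\eqref{doublebox} from $\Box\times\Box$ to arbitrary binary products $Y\times Z$, then to finite products, and finally to arbitrary effective-monic cones via a tupling-closure argument. For the first step, assume~\ref{boxinj} and consider $\beta,\beta'\in F(Y\times Z)$ that project to the same pair in $F(Y)\times F(Z)$. I would invoke the sheaf condition on the directed effective-monic cone $\{f\times g : Y\times Z\to\Box\times\Box\}_{f,g}$ provided by Corollary~\ref{doublesquare}; this reduces the goal to $F(f\times g)(\beta)=F(f\times g)(\beta')$ for every $f:Y\to\Box$ and $g:Z\to\Box$. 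Using~\ref{boxinj} on $F(\Box\times\Box)$, it suffices to check equality of the two projections to $F(\Box)$, and this follows immediately from the identities $p_1\circ(f\times g)=f\circ p_Y$ and $p_2\circ(f\times g)=g\circ p_Z$, together with $F(p_Y)(\beta)=F(p_Y)(\beta')$ and $F(p_Z)(\beta)=F(p_Z)(\beta')$. A routine induction then gives injectivity of $F(Y_1\times\cdots\times Y_n)\to F(Y_1)\times\cdots\times F(Y_n)$ for any finite product.

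For a general effective-monic cone $\{f_i:X\to Y_i\}_{i\in I}$, I would pass to the enlarged cone
\[
\left\{\,(f_{i_1},\ldots,f_{i_n}):X\to Y_{i_1}\times\cdots\times Y_{i_n}\,\right\}
\]
indexed by all finite tuples in $I$. By Lemma~\ref{Sinfty} this cone is directed effective-monic, so the sheaf condition makes the canonical map from $F(X)$ into the product of the $F(Y_{i_1}\times\cdots\times Y_{i_n})$ injective. Given $\alpha,\alpha'\in F(X)$ with $F(f_i)(\alpha)=F(f_i)(\alpha')$ for all $i$, the finite-product step shows $F((f_{i_1},\ldots,f_{i_n}))(\alpha)=F((f_{i_1},\ldots,f_{i_n}))(\alpha')$ for every tuple, so $\alpha=\alpha'$.

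The conceptual obstacle I expect is not technical but bookkeeping: the key move is the reduction from an arbitrary effective-monic cone to its tupling closure, because directedness (and hence the sheaf condition) is not preserved under restriction to the original cone. Once Corollary~\ref{doublesquare} and Lemma~\ref{Sinfty} are in place, all remaining work is a short diagram chase.
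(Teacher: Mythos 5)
Your proposal is correct and follows essentially the same route as the paper's proof: the easy direction via effective-monicity of $\{p_1,p_2\}$, then promotion of injectivity to binary and finite products via Corollary~\ref{doublesquare}, and finally the reduction of an arbitrary effective-monic cone to its tupling closure via Lemma~\ref{Sinfty}. The only difference is presentational (element chasing rather than the paper's commutative-diagram formulation), so there is nothing to add.
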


In~\ref{allinj}, the point is that the cone may not be directed, so generically $F$ does not satisfy the sheaf condition on it. The intuition behind the lemma is that these (equivalent) conditions hold in the C*-algebra case, and then the image of~\eqref{doublebox} consists of precisely the pairs of commuting normal elements. In terms of the interpretation as measurements on a physical system, this image consists of the pairs of measurements (with values in $\Box$) that are jointly measurable.

In the proof, we can start to put the seemingly haphazard lemmas of the previous subsection to some use.

\begin{proof}
Since the cone $\{p_1,p_2:\Box\times\Box\to\Box\}$ is effective-monic, condition~\ref{boxinj} is a special case of~\ref{allinj}.

In the other direction, we first show that for every $X,Y\in\CHs$, the canonical map $F(X\times Y)\to F(X)\times F(Y)$ is injective. By Corollary~\ref{doublesquare}, the left vertical arrow in
\[
\xymatrix{	F(X\times Y) \ar[r] \ar[d]			& F(X) \times F(Y) \ar[d]	\\
		\mathlarger{\mathlarger{\prod}}_{f:X\to\Box,g:Y\to\Box} F(\Box\times\Box) \ar[r]	& \left(\mathlarger{\mathlarger{\prod}}_{f:X\to\Box} F(\Box)\right)\times\left(\mathlarger{\mathlarger{\prod}}_{g:Y\to\Box} F(\Box)\right)		}
\]
is injective. Since the lower horizontal arrow is injective by assumption, it follows that the upper horizontal arrow is also injective. By induction, we then obtain that $F(\prod_{j=1}^n X_j)\to \prod_{j=1}^n F(X_j)$ is injective for any finite product.

Now let $\{f_i\}$ be an arbitrary effective-monic cone on $X$. By Lemma~\ref{Sinfty}, $F$ satisfies the sheaf condition on the cone consisting of all the finite tuplings $(f_{i_1},\ldots,f_{i_n})$. Hence we have the diagram
\[
\xymatrix{	F(X) \ar[r] \ar[d]	& \mathlarger{\mathlarger{\prod}}_{i\in I} F(Y_i) \ar[d]								\\
		\mathlarger{\mathlarger{\prod}}_{n\in\Nl} \: \mathlarger{\mathlarger{\prod}}_{i_1,\ldots,i_n\in I} F\left(\mathlarger{\mathlarger{\prod}}_{m=1}^n Y_{i_m}\right) \ar[r]	& \mathlarger{\mathlarger{\prod}}_{n\in\Nl} \: \mathlarger{\mathlarger{\prod}}_{i_1,\ldots,i_n\in I} \: \mathlarger{\mathlarger{\prod}}_{m=1}^n F(Y_{i_m})	}
\]
where the left vertical arrow is injective due to the sheaf condition, and the lower horizontal one due to the first part of the proof. Hence also the upper horizontal arrow is injective.
\end{proof}

So far, we do not know of any sheaf $\CHs\to\Sets$ that would \emph{not} have the property characterized by the lemma.

By Gelfand duality, the commutative C*-algebras are precisely the representable functors $\CHs(W,-):\CHs\to\Sets$. These are characterized in terms of a condition similar to the previous lemma:

\begin{lemma}
\label{replem}
For $F\in\Sh(\CHs)$, the following are equivalent:
\begin{enumerate}
\item\label{boxbij} The canonical map
\[
\xymatrix{	(F(p_1),F(p_2)) \: :\: F(\Box\times\Box) \ar[r]	& F(\Box)\times F(\Box)	}
\]
is bijective.
\item\label{allbij} $F$ satisfies the sheaf condition on every effective-monic cone $\{f_i:X\to Y_i\}$ in $\CHs$.
\item\label{Frep} $F$ is representable.
\end{enumerate}
\label{bijlem}
\end{lemma}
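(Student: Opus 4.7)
For \ref{Frep}$\Rightarrow$\ref{allbij}, immediate from Definition~\ref{effmondef}. For \ref{allbij}$\Rightarrow$\ref{boxbij}, the product cone $\{p_1,p_2:\Box\times\Box\to\Box\}$ is effective-monic by Example~\ref{limit}, so (b) gives a bijection between $F(\Box\times\Box)$ and compatible pairs in $F(\Box)\times F(\Box)$. Any compatibility diagram $g\circ p_1 = h\circ p_2$ forces $g$ and $h$ to be constant at a common point, factoring through the terminal object $\mathbf{1}$. Since the empty cone on $\mathbf{1}$ is vacuously directed effective-monic, every sheaf satisfies $F(\mathbf{1}) = \{*\}$, so compatibility is automatic and (a) follows.

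The substance is \ref{boxbij}$\Rightarrow$\ref{Frep}, which I would derive in two steps: (a)$\Rightarrow$(b), then (b)$\Rightarrow$(c) via the Special Adjoint Functor Theorem. For (a)$\Rightarrow$(b), the key intermediate claim is that $F$ preserves finite products, i.e.\ $F(X\times Y)\cong F(X)\times F(Y)$ for all $X,Y\in\CHs$. Injectivity of $(F(p_X),F(p_Y))$ follows from Lemma~\ref{injlem} applied to the effective-monic product cone. For surjectivity, given $(\alpha,\beta)\in F(X)\times F(Y)$, I use the directed effective-monic cone $\{f\times g:X\times Y\to\Box\times\Box\}$ of Corollary~\ref{doublesquare}, identifying $F(\Box\times\Box)\cong F(\Box)^2$ via (a); then I define the candidate family $\gamma_{f,g}\defin(F(f)(\alpha),F(g)(\beta))$, verify its compatibility, and invoke the sheaf condition to obtain the desired lift $\gamma\in F(X\times Y)$. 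Iterating delivers preservation of all finite products. Next, for any effective-monic cone $\{f_i:X\to Y_i\}$, Lemma~\ref{Sinfty} supplies a directed effective-monic cone of finite tuplings whose sheaf condition, combined with product preservation, yields the sheaf condition on $\{f_i\}$, proving (b). Finally, (b)$\Rightarrow$(c) follows because every limit cone in $\CHs$ is effective-monic (Example~\ref{limit}), so (b) implies $F$ preserves all small limits; since $\CHs$ is complete, locally small, well-powered, and admits $\Box$ as a cogenerator (by the point-separation underlying Lemma~\ref{squarecover}), the Special Adjoint Functor Theorem supplies a left adjoint $L:\Sets\to\CHs$, and hence $F\cong\CHs(L(\{*\}),-)$.

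The main obstacle is the compatibility verification in the surjectivity step of product preservation: after reducing via the sheaf condition on $\{h:Z\to\Box\}$ to the case $Z=\Box$, one must show that the binary operations $k_*,k'_*:F(\Box)^2\to F(\Box)$ induced by $k,k':\Box^2\to\Box$ via (a) agree on $(F(f_1)(\alpha),F(g_1)(\beta))$ and $(F(f_2)(\alpha),F(g_2)(\beta))$ whenever $k\circ(f_1\times g_1) = k'\circ(f_2\times g_2)$ as maps $X\times Y\to\Box$. Extracting this identity from the separate compatibility of the families $(F(f)(\alpha))_f$ and $(F(g)(\beta))_g$ over the directed cones $\{f:X\to\Box\}$ and $\{g:Y\to\Box\}$ is the delicate heart of the argument.
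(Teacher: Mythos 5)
Your outer steps are sound: \ref{Frep}$\Rightarrow$\ref{allbij} is definitional, your \ref{allbij}$\Rightarrow$\ref{boxbij} via constancy of the comparison maps and $F(\mathbf{1})\cong\mathbf{1}$ matches the paper, injectivity of $F(X\times Y)\to F(X)\times F(Y)$ via Lemma~\ref{injlem} is correct, and finishing with the Special Adjoint Functor Theorem (using that $\Box$ cogenerates the complete, well-powered, locally small $\CHs$) is a legitimate variant of the paper's appeal to the representable functor theorem. But the step you yourself flag as the ``delicate heart'' is a genuine gap, and it is where the entire content of \ref{boxbij}$\Rightarrow$\ref{Frep} lives. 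Since $F$ satisfies the sheaf condition on the directed effective-monic cone $\{f\times g:X\times Y\to\Box\times\Box\}$ of Corollary~\ref{doublesquare}, the compatibility of your candidate family $\gamma_{f,g}=(F(f)(\alpha),F(g)(\beta))$ is not an auxiliary verification: it is \emph{equivalent} to the surjectivity you are proving, and it cannot be ``extracted from the separate compatibility of the families $(F(f)(\alpha))_f$ and $(F(g)(\beta))_g$'', because those two compatibilities carry no more information than the existence of $\alpha\in F(X)$ and $\beta\in F(Y)$. Concretely, to compare $F(k)(\gamma_{f_1,g_1})$ with $F(k')(\gamma_{f_2,g_2})$ when $k\circ(f_1\times g_1)=k'\circ(f_2\times g_2)$, the natural moves --- passing to an element of $F(S\times T)$ with $S=\im(f_1,f_2)$, $T=\im(g_1,g_2)$, or to $F(\Box^{4})\cong F(\Box)^{4}$ --- presuppose exactly the product preservation being established, since hypothesis \ref{boxbij} only covers $\Box\times\Box$. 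The same unproved gluing recurs in your passage from the tupling cone of Lemma~\ref{Sinfty} to an arbitrary effective-monic cone: compatibility with respect to maps out of $Y_{i_1}\times\cdots\times Y_{i_n}$ is not a formal consequence of compatibility over the individual $Y_i$ together with product preservation.

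The paper closes precisely this gap by an asymmetric bootstrap that your proposal never invokes: it uses the directed cones $\{f\times\id_Y:X\times Y\to\Box\times Y\}$ (first with $Y=\Box$) together with the topological input that $-\times Y$ preserves pushouts in $\CHs$ (via $\CGHs$), so that compatibility over the pushout $(\Box\pushout{f}{g}\Box)\times Y$ reduces to squares of the form $h\times\id_Y$, $k\times\id_Y$ and then splits componentwise under the already-available isomorphism $F(\Box\times Y)\cong F(\Box)\times F(Y)$; this gives $F(X\times\Box)\cong F(X)\times F(\Box)$ and then $F(X\times Y)\cong F(X)\times F(Y)$. Without an argument of this kind (or some other genuinely new input) your central compatibility claim is unsupported. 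Two secondary gaps: the inference ``limit cones are effective-monic, so \ref{allbij} gives preservation of all small limits'' is too quick, because the sheaf condition on a limit cone demands compatibility for \emph{all} pairs coequalizing two projections, a priori stronger than the cone-compatibility defining $\lim F\circ L$; the safe route is products (where compatibility is vacuous by constancy and $F(\mathbf{1})\cong\mathbf{1}$, as in your own \ref{allbij}$\Rightarrow$\ref{boxbij}) together with equalizers, and the equalizer case needs something like the paper's argument that the induced map $X\pushout{e}{e}X\to X\times Y$ is monic --- a step your proposal does not address at all.
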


\begin{proof}
By the definition of effective-monic,~\ref{Frep} trivially implies~\ref{allbij}. Also if~\ref{allbij} holds, then it is easy to show~\ref{boxbij}: the empty cone is effective-monic on $1\in\CHs$, which implies $F(1)\cong 1$. With this in mind,~\ref{boxbij} is the sheaf condition on the effective-monic cone $\{p_1,p_2:\Box\times\Box\to\Box\}$.

The burden of the proof is the implication from~\ref{boxbij} to~\ref{Frep}. By the representable functor theorem~\cite[p.~130]{ML} and the generation of limits by products and equalizers, it is enough to show that $F$ preserves products and equalizers, which we do in several steps. First, the functor $-\times Y$ preserves pushouts for any $Y\in\CHs$: the functor $-\times Y:\CGHs\to\CGHs$ preserves colimits as a left adjoint~\cite[Theorem~VII.8.3]{ML}, and the inclusion functor $\CHs\to\CGHs$ also preserves finite colimits, since it preserves finite coproducts and coequalizers (the latter by the automatic compactness of quotients of compact spaces).

Second, we prove that the canonical map $F(X\times\Box)\longrightarrow F(X)\times F(\Box)$ is a bijection for every $X\in\CHs$. To this end, we consider the effective-monic cone $\{f\times\id_\Box : X\times\Box\to\Box\times\Box\}$ indexed by $f:X\to\Box$. We know that this cone is directed by Lemmas~\ref{guarcommsquare} and~\ref{productcovers}. This entails that $F(X\times\Box)$ is equal to the set of compatible families $\{\beta_f\}_{f:X\to\Box}$ of elements of $\prod_{f:X\to\Box} F(\Box\times\Box)$. Since $-\times\Box$ preserves pushouts as per the first observation, the compatibility condition is the one associated to the squares of the form
\[
\xymatrix{	X\times\Box \ar[r]^{f\times\id_\Box} \ar[d]_{g\times\id_\Box}	& \Box\times\Box \ar[d]					\\
		\Box\times\Box \ar[r]						& (\Box\pushout{f\times\id}{g\times\id}\Box)\times\Box	}
\]
By using the fact that the maps $\Box\pushout{f\times\id}{g\times\id}\Box\longrightarrow\Box$ separate points, it is sufficient to postulate the compatibility on all commuting squares of the form
\[
\xymatrix{	X\times\Box \ar[r]^{f\times\id_\Box} \ar[d]_{g\times\id_\Box}	& \Box\times\Box \ar[d]^{h\times\id_\Box}	\\
		\Box\times\Box \ar[r]_{k\times\id_\Box}				& \Box\times\Box				}
\]
So upon decomposing $\beta_f = (\beta_f^1,\beta_f^2)$ via $F(\Box\times\Box) = F(\Box)\times F(\Box)$, the compatibility condition is precisely that $F(h)(\beta^1_f) = F(k)(\beta^1_g)$ and that $\beta^2_f = \beta^2_g$ for all $h,k:\Box\to\Box$ with $hf=kg$. Since the family of first components therefore corresponds precisely to an element of $F(X)$, we conclude that the canonical map $F(X\times\Box)\longrightarrow F(X)\times F(\Box)$ is an isomorphism.

Third, we use this result to show that $F(X\times Y)\longrightarrow F(X)\times F(Y)$ is an isomorphism for all $X,Y\in\CHs$; the proof is the same as above, just with $-\times\Box$ replaced by $-\times Y$. The case of finite products $F(\prod_{i=1}^n X_i)\cong \prod_{i=1}^n F(X_i)$ then follows by induction, and the case of infinite products by the sheaf condition.

The preservation of equalizers also takes a bit of work. Since every monomorphism $f:X\to Y$ in $\CHs$ is regular, the singleton cone $\{f\}$ is effective-monic. Since this cone is trivially directed, $F$ satisfies the sheaf condition on it, which entails that $F(f):F(X)\to F(Y)$ must be injective.

Second, a diagram
\[
\xymatrix{	E \ar[r]|e	& X \ar@<1ex>[r]^f \ar@<-1ex>[r]_g	& Y	}
\]
is an equalizer if and only if
\[
\xymatrix{	E \ar[r]^e \ar[d]_e	& X \ar[d]^{(\id_X,f)}	\\
		X \ar[r]_(.4){(\id_X,g)}	& X\times Y		}
\]
is a pullback. By constructing the pushout $X\pushout{e}{e}X$ as a quotient of $X\amalg X$ and doing a case analysis on pairs of points in $X\pushout{e}{e}X$, the induced arrow $k$ in
\[
\xymatrix{	E \ar[r]^e \ar[d]_e				& X \ar[d]^(.45)i \ar@/^3ex/[ddr]^{(\id_X,f)}			\\
		X \ar[r]_(.4)j \ar@/_3ex/[rrd]_{(\id_X,g)}	& X\pushout{e}{e}X \ar@{-->}[dr]|k				\\
								&						& X\times Y	}
\]
is seen to be a monomorphism, and therefore so is $F(k)$. So if $\beta\in F(X)$ is such that $F(f)(\beta) = F(g)(\beta)$, then also $F(i)(\beta) = F(j)(\beta)$. But by the sheaf condition on the singleton cone $\{e\}$, this means that $\beta$ is in the image of $F(e)$, as was to be shown.
\end{proof}

For $F\in\Sh(\CHs)$, any $W\in\CHs$ and any $\alpha\in F(W)$, let $F_\alpha : \CHs\to\Sets$ be the subfunctor of $F$ generated by $\alpha$. Concretely, over every $X\in\CHs$, the set $F_\alpha(X)$ consists of all the images $F(f)(\alpha)$ for $f:W\to X$.

\begin{proposition}
If the canonical map
\be
\label{timesinj}
\xymatrix{	F(\Box\times\Box) \ar[r]	& F(\Box)\times F(\Box)	}
\ee
is injective, then such an $F_\alpha$ is representable.
\label{singlygenerated}
\end{proposition}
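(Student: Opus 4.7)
The plan is to apply Lemma~\ref{bijlem}: it will suffice to verify that $F_\alpha\in\Sh(\CHs)$ and that the canonical map $F_\alpha(\Box\times\Box)\to F_\alpha(\Box)\times F_\alpha(\Box)$ is a bijection, since then Lemma~\ref{bijlem} will immediately yield representability.

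First I will address the $\Box\times\Box$ bijection, which is the easier condition. Injectivity is inherited from $F$ via the inclusion $F_\alpha\hookrightarrow F$: two elements of $F_\alpha(\Box\times\Box)$ that project to the same pair in $F_\alpha(\Box)\times F_\alpha(\Box)$ must already coincide in $F(\Box\times\Box)$ by hypothesis, and hence in $F_\alpha(\Box\times\Box)$. For surjectivity, given $(F(g_1)(\alpha),F(g_2)(\alpha))\in F_\alpha(\Box)\times F_\alpha(\Box)$, the pairing $(g_1,g_2):W\to\Box\times\Box$ yields $F((g_1,g_2))(\alpha)\in F_\alpha(\Box\times\Box)$, and applying $F(p_1)$ and $F(p_2)$ recovers the prescribed pair.

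The harder step, which I expect to be the \emph{main obstacle}, is showing $F_\alpha\in\Sh(\CHs)$. Given a directed effective-monic cone $\{f_i:X\to Y_i\}$ and a compatible family $\{\beta_i\}$ in $F_\alpha$ with $\beta_i=F(g_i)(\alpha)$ for chosen $g_i:W\to Y_i$, the sheaf property of $F$ furnishes a unique $\beta\in F(X)$ with $F(f_i)(\beta)=\beta_i$, and one must exhibit $g:W\to X$ with $F(g)(\alpha)=\beta$. The strategy is to combine Lemma~\ref{injlem} (which upgrades the hypothesis to injectivity of $F(X)\to\prod_i F(Y_i)$ on every effective-monic cone) with the auxiliary separating cones witnessing the directedness of $\{f_i\}$, using these together to force the $g_i$ to satisfy the compatibility condition $hg_i=kg_j$ in $\CHs$ whenever $hf_i=kf_j$. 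Once such a coherent choice of $g_i$ is secured, the effective-monic property of $\{f_i\}$ provides the required $g:W\to X$ by the universal property that identifies $X$ with the compatible families in $\prod_i Y_i$.

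The delicacy of this last step is that the $g_i$ witnessing a given $\beta_i$ are not unique: a priori, compatibility of $\{\beta_i\}$ only gives $F(hg_i)(\alpha)=F(kg_j)(\alpha)$ rather than the desired equality $hg_i=kg_j$ at the level of $\CHs$. Bridging this gap will require exploiting the rigidity supplied by the injectivity, propagated through the refined separating cones from Definition~\ref{directeddef}, in order to coherently adjust the choices of $g_i$ across all indices simultaneously.
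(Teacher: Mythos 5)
Your overall route is the same as the paper's: establish that $F_\alpha\in\Sh(\CHs)$ and that $F_\alpha(\Box\times\Box)\to F_\alpha(\Box)\times F_\alpha(\Box)$ is bijective, then invoke Lemma~\ref{replem}. Your treatment of the bijection (injectivity inherited from the hypothesis~\eqref{timesinj} because $F_\alpha$ is a subfunctor, surjectivity by transporting $\alpha$ along the pairing $(g_1,g_2):W\to\Box\times\Box$) is exactly the argument the paper gives, and it is correct.

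The genuine gap is the step you yourself flag as the main obstacle, namely $F_\alpha\in\Sh(\CHs)$. You have correctly located the crux --- the paper dismisses this as ``straightforward to verify'' without argument --- but your proposed bridge cannot be built. From $F(hg_i)(\alpha)=F(kg_j)(\alpha)$ no amount of rigidity from Lemma~\ref{injlem} (which only constrains elements of $F$, not morphisms of $\CHs$) or from the separating cones of Definition~\ref{directeddef} lets you adjust the witnesses so that $hg_i=kg_j$ holds on the nose, and in fact no coherent choice need exist. Concretely, take $F=\CHs(\mathbf{2},-)$ (representable, so~\eqref{timesinj} is even bijective), $W=[0,1]$, and let $\alpha\in F(W)$ be the inclusion of the two endpoints. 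Then $F_\alpha(\Box)=F(\Box)$, since any two points of $\Box$ are joined by a path in $\Box$, whereas $F_\alpha(\mathbf{2})$ contains only the two constant maps, since $[0,1]$ is connected. For the directed effective-monic cone $\{f:\mathbf{2}\to\Box\}$ of Lemmas~\ref{squarecover} and~\ref{guarcommsquare}, the compatible family $\beta_f\defin F(f)(u)$ with $u=\id_{\mathbf{2}}\in F(\mathbf{2})$ is valued in $F_\alpha(\Box)$, but its unique gluing in $F(\mathbf{2})$ is $u\notin F_\alpha(\mathbf{2})$; so the witnesses $g_f:[0,1]\to\Box$ admit no strictly compatible adjustment, and $F_\alpha$ (as the concrete image subfunctor defined in the paper) fails the sheaf condition. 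So the intermediate claim your strategy aims at is false at this level of generality, and your plan cannot close the gap; salvaging the statement requires either reading $F_\alpha$ as the sub\emph{sheaf} of $F$ generated by $\alpha$ (which then forces a reworking of the surjectivity step at $\Box\times\Box$) or restricting $W$ and $\alpha$ --- a difficulty which, to be fair, the paper's own one-line assertion glosses over as well.
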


\begin{proof}
It is straightforward to verify that $F_\alpha$ is also a sheaf. Lemma~\ref{replem} and the injectivity assumption on $F$ then complete the proof if we can show that every pair of elements $(\beta_1,\beta_2)\in F_\alpha(\Box)\times F_\alpha(\Box)$ actually comes from an element of $F_\alpha(\Box\times\Box)$. To this end, we write $\beta_1 = F(f_1)(\alpha)$ and $\beta_2 = F(f_2)(\alpha)$ for certain $f_1,f_2:W\to\Box$. Now considering $\alpha$ transported along the pairing $(f_1,f_2) : W\to \Box\times\Box$ results in an element of $F(\Box\times\Box)$ that reproduces $(\beta_1,\beta_2)$.
\end{proof}

Here is another smallness result:

\begin{proposition}
$\Sh(\CHs)$ is well-powered.
\label{wellpowered}
\end{proposition}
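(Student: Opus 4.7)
The plan is to bound the class of subobjects of $F$ in size by the power set $2^{F(\Box)}$. The tools are that $\CHs(\Box,-)$ is a separator (Corollary~\ref{locsmall}) and that every sheaf satisfies the sheaf condition on the effective-monic directed cone $\{f:X\to\Box\}_{f:X\to\Box}$ of all functions into $\Box$ (Lemmas~\ref{squarecover} and~\ref{guarcommsquare}). I proceed in two steps: first, show that every monomorphism $m:G\hookrightarrow F$ in $\Sh(\CHs)$ is pointwise injective, so that every subobject is represented by a pointwise subfunctor $G\subseteq F$; then show that such a pointwise subsheaf is completely determined by the subset $G(\Box)\subseteq F(\Box)$.

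For pointwise injectivity, note first that every representable $\CHs(X,-)$ is itself a sheaf: by Definition~\ref{effmondef} it satisfies the sheaf condition on all effective-monic cones, in particular on the directed ones. Given $x,y\in G(\Box)$ with $m_\Box(x)=m_\Box(y)$, the Yoneda transformations $\tilde x,\tilde y:\CHs(\Box,-)\to G$ satisfy $(m\tilde x)_\Box=(m\tilde y)_\Box$, so $m\tilde x=m\tilde y$ by Lemma~\ref{evalbox}, whence $\tilde x=\tilde y$ by monicity of $m$ and thus $x=y$. For arbitrary $X\in\CHs$ and $x,y\in G(X)$ with $m_X(x)=m_X(y)$, naturality gives $m_\Box(G(f)(x))=m_\Box(G(f)(y))$ for every $f:X\to\Box$; the injectivity of $m_\Box$ just proved then yields $G(f)(x)=G(f)(y)$ for all such $f$, and the sheaf condition for $G$ on the effective-monic cone $\{f:X\to\Box\}$ forces $x=y$.

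Having identified each subobject with a pointwise subsheaf $G\subseteq F$, I apply the sheaf condition on $\{f:Z\to\Box\}$ to both $G$ and $F$, for arbitrary $Z\in\CHs$, to obtain the identification
\[
G(Z)=\{\,\beta\in F(Z)\mid F(f)(\beta)\in G(\Box)\text{ for every }f:Z\to\Box\,\}.
\]
Indeed, any $\beta\in G(Z)$ obviously satisfies the right-hand condition; conversely, if $\beta\in F(Z)$ does, the family $(F(f)(\beta))_{f:Z\to\Box}$ is compatible in $F$, hence also in $G$ by pointwise injectivity of $G\hookrightarrow F$, so it glues to a unique $\tilde\beta\in G(Z)$, and the sheaf condition for $F$ on the same cone then identifies $\tilde\beta$ with $\beta$ inside $F(Z)$. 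Thus $G$ is completely reconstructed from $G(\Box)\subseteq F(\Box)$, and the assignment $G\mapsto G(\Box)$ injects the subobjects of $F$ into the set $2^{F(\Box)}$. The main subtlety is in cleanly lining up the two uses of the sheaf condition needed for the displayed formula; once that is in place, well-poweredness is immediate.
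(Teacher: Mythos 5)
Your argument is correct and follows essentially the same route as the paper's proof: injectivity of the $\Box$-component of a monomorphism via Yoneda and the separator $-(C(\Box))$, followed by the sheaf condition on the directed effective-monic cone $\{f:X\to\Box\}$ to reconstruct the whole subsheaf from its value at $\Box$, giving the bound $2^{|F(\Box)|}$. Your version merely makes explicit two points the paper leaves implicit (pointwise injectivity of monomorphisms at every $X$, and the displayed reconstruction formula for $G(Z)$), which is a fine elaboration rather than a different proof.
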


\begin{proof}
Let $\eta:F\to G$ be a monomorphism in $\Sh(\CHs)$. Then upon composing morphisms of the form $-(C(\Box))\longrightarrow F$ with $\eta$, the Yoneda lemma~\eqref{yonedaeq} shows that the component $\eta_\Box:F(\Box)\to G(\Box)$ is injective, since the diagram
\[
\xymatrix{	\Sh(\CHs)(-(C(\Box)),F) \ar[r]_(.7){\cong}^(.7){\eqref{yonedaeq}} \ar[d]_{\eta\circ -}	&	F(\Box)	\ar[d]^{\eta_\Box}	\\
		\Sh(\CHs)(-(C(\Box)),G) \ar[r]_(.7){\cong}^(.7){\eqref{yonedaeq}} 			&	G(\Box)				}
\]
commutes. 

Again using the sheaf condition on all functions $X\to\Box$ and the fact that $\Box$ is a coseparator in $\CHs$, we can identify the $\alpha\in F(X)$ with the families $\{\beta_f\}$ with $\beta_f\in F(\Box)$ that are indexed by $f:X\to\Box$ and satisfy the compatibility condition that $F(h)(\beta_f) = \beta_{hf}$ for all $f$ and $h:\Box\to\Box$. Hence we have the diagram
\[
\xymatrix{	F(X) \ar[r] \ar[d]^{\eta_X}	& \mathlarger{\mathlarger{\prod}}_{f:X\to\Box} F(\Box) \ar[d]^{\prod_f \eta_{\Box}} \ar@<1ex>[r] \ar@<-1ex>[r]	& \mathlarger{\mathlarger{\prod}}_{f:X\to\Box,h:\Box\to\Box} F(\Box) \ar[d]^{\prod_{f,h} \eta_\Box}	\\
		G(X) \ar[r]			& \mathlarger{\mathlarger{\prod}}_{f:X\to\Box} G(\Box) \ar@<1ex>[r] \ar@<-1ex>[r]					& \mathlarger{\mathlarger{\prod}}_{f:X\to\Box,h:\Box\to\Box} G(\Box)				}
\]
in which both rows are equalizers. So for fixed $G$, the set $F(X)$ is determined by the inclusion map $\eta_\Box:F(\Box)\to G(\Box)$. Hence the number of subobjects of $G$ is bounded by $2^{|G(\Box)|}$.
\end{proof}

\begin{corollary}
Every sheaf $F:\CHs\to\Sets$ for which~\eqref{timesinj} is injective is a (small) colimit in $\Sh(\CHs)$ of representable functors.
\end{corollary}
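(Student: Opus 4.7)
The plan is to exhibit $F$ as the colimit in $\Sh(\CHs)$ of a small diagram whose objects are the representable subobjects of $F$, with inclusions as morphisms.

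First, I would apply Proposition~\ref{singlygenerated}: for each $X\in\CHs$ and $\gamma\in F(X)$, the subfunctor $F_\gamma\subseteq F$ generated by $\gamma$ is representable and contains $\gamma$ as an element of $F_\gamma(X)$. A direct check shows that $F_\gamma$ is the smallest subfunctor of $F$ containing $\gamma$: if $\gamma=F(g)(\alpha)$ for some $\alpha\in F(W)$ and $g:W\to X$, then $F(f)(\gamma)=F(fg)(\alpha)\in F_\alpha(Y)$ for every $f:X\to Y$, whence $F_\gamma\subseteq F_\alpha$. In particular, every element of $F$ belongs to some representable subobject of $F$.

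Next, I would let $\mathcal{D}$ denote the poset of representable subobjects of $F$ ordered by inclusion. Proposition~\ref{wellpowered} forces $\mathcal{D}$ to be small, so the tautological functor $D:\mathcal{D}\to\Sh(\CHs)$ is a small diagram taking values in representables, and the inclusions $\{G\hookrightarrow F\}_{G\in\mathcal{D}}$ form a cocone under $D$ with vertex $F$.

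The main task—and the principal obstacle—is to verify that this cocone is colimiting in $\Sh(\CHs)$ (not merely in $\Sets^\CHs$). Given a competing cocone $\{\phi_G:G\to H\}_{G\in\mathcal{D}}$ with $H$ a sheaf, I would define $\phi:F\to H$ by $\phi_X(\gamma)\defin(\phi_{F_\gamma})_X(\gamma)$ for $\gamma\in F(X)$. Uniqueness is immediate, since any factorization must agree with $\phi_{F_\gamma}$ on $\gamma\in F_\gamma(X)$. The cocone identity $\phi|_G=\phi_G$ uses minimality of $F_\gamma$ (so that $F_\gamma\subseteq G$ whenever $\gamma\in G(X)$) combined with the compatibility $\phi_G|_{F_\gamma}=\phi_{F_\gamma}$. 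Naturality in $X$ reduces similarly to the inclusion $F_{F(f)(\gamma)}\subseteq F_\gamma$ together with the naturality of $\phi_{F_\gamma}$. Although each verification is routine, carefully assembling them exhibits $F\cong\mathrm{colim}\,D$ in $\Sh(\CHs)$ as the desired small colimit of representables.
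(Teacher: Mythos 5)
Your proposal is correct and follows essentially the same route as the paper: exhibit $F$ as the colimit over the inclusion-ordered family of subfunctors $F_\gamma$ from Proposition~\ref{singlygenerated} (equivalently, the representable subobjects), invoke Proposition~\ref{wellpowered} for smallness, and verify the universal property in $\Sh(\CHs)$ by defining the mediating map via $\gamma\mapsto(\phi_{F_\gamma})_X(\gamma)$, with naturality and the cocone identity reduced to minimality of $F_\gamma$ and the compatibility of the given cocone. The only cosmetic difference is that you argue uniqueness directly from $\gamma\in F_\gamma(X)$, whereas the paper routes it through evaluation at $\Box$; both are fine.
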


\begin{proof}
We show that $F$ is the colimit in $\Sh(\CHs)$ of the subfunctors of the form $F_\alpha$ from Proposition~\ref{singlygenerated}, as ordered by inclusion; thanks to Proposition~\ref{wellpowered}, this colimit is equivalent to a small colimit.

To show the required universal property, suppose first that $\eta,\eta'\in\Sh(\CHs)(F,G)$ coincide upon restriction to all $F_\alpha$. Then in particular, $\eta_\Box(\alpha) = \eta'_\Box(\alpha)$ for all $\alpha\in F(\Box)$, and hence $\eta=\eta'$ by the previous results. Conversely, let $\{\phi^\alpha\}_\alpha$ be a family of natural transformations $\phi^\alpha: F_\alpha\to G$ that are compatible in the sense that if $F_\beta\subseteq F_\alpha$, then $\phi^\alpha |_{F_\beta} = \phi^\beta$. Then define the component $\eta_X : F(X)\to G(X)$ on every $\alpha\in F(X)$ as
\[
\eta_X(\alpha)\defin \phi^\alpha_X(\alpha).
\]
The commutativity of the naturality square
\[
\xymatrix{	F(X) \ar[r]^{\eta_X} \ar[d]_{F(f)}	& G(X) \ar[d]^{G(f)}	\\
		F(Y) \ar[r]_{\eta_Y}			& G(Y)			}
\]
on some $\alpha\in F(X)$ follows from  
\[
G(f)(\phi^\alpha_X(\alpha)) = \phi^\alpha_Y(F(f)(\alpha)) = \phi^{F(f)(\alpha)}_{Y}(F(f)(\alpha)) ,
\]
where the first equation is naturality of $\phi^\alpha$ and the second one is the assumed compatibility.

To see that $\eta$ restricts to $\phi^\alpha$ on every $F_\alpha$, we show that the components coincide, i.e.~$\eta_Y = \phi^\alpha_Y$ for all $Y\in\CHs$ and $\beta\in F_\alpha(Y)$. Now we must have $\beta = F(f)(\alpha)$ for suitable $f:X\to Y$, and consider the diagram
\[
\xymatrix{	F_\alpha(X) \ar[r] \ar[d]	& F(X) \ar[r]^{\eta_X} \ar[d]|{F(f)}	& G(X) \ar[d]^{G(f)}	\\
		F_\alpha(Y) \ar[r]		& F(Y) \ar[r]_{\eta_Y}			& G(Y)			}
\]
Starting with $\alpha$ in the upper left, we have $\phi^\alpha_X(\alpha)$ in the upper right, and hence
\[
G(f)(\phi^\alpha_X(\alpha)) = \phi^\alpha_Y(F(f)(\alpha)) = \phi^\alpha_Y(\beta)
\]
in the lower right, where the first equation is as above. Since we also have $\beta$ in the lower left, we obtain the desired $\eta_Y(\beta) = \phi^\alpha_Y(\beta)$.
\end{proof}

In light of the upcoming Theorem~\ref{pCalgthm}, this result is closely related to~\cite[Theorem~5]{piecewise}. The only potential difference is that our colimit is taken in $\Sh(\CHs)$, while van den Berg and Heunen consider it in $\pCalg$, and it is not clear whether these are equivalent.

\bigskip

Since it is currently unclear whether Definition~\ref{sheafdef} is the most adequate collection of sheaf conditions that one can postulate, we do not investigate the categorical properties of $\Sh(\CHs)$ any further in this paper.

\newpage
\section{Piecewise C*-algebras as sheaves $\CHs\to\Sets$}
\label{piecewisesec}

In this section, we will establish that $\Sh(\CHs)$ contains the category of \emph{piecewise C*-algebras} introduced by van den Berg and Heunen~\cite{piecewise} as a full subcategory. The following definition was inspired by Kochen and Specker's consideration of partial algebras~\cite{KS}.\footnote{For this reason van den Berg and Heunen introduced their definition as \emph{partial C*-algebras}, but the term was subsequently changed to \emph{piecewise C*-algebra}~\cite{HR}.}

\newcommand{\commute}{\mathop{\Perp}}

\begin{definition}[\cite{piecewise}]
\label{piecewisedef}
A \emph{piecewise C*-algebra} is a set $A$ equipped with the following pieces of structure:
\begin{enumerate}
\item a reflexive and symmetric relation $\commute\subseteq A\times A$. If $\alpha\commute\beta$, we say that $\alpha$ and $\beta$ \emph{commute};
\item binary operations $+,\cdot:\commute\rightarrow A$;
\item a scalar multiplication $\cdot:\Cl\times A\rightarrow A$;
\item distinguished elements $0, 1\in A$;
\item an involution $*:A\rightarrow A$;
\item a norm $||-||:A\rightarrow \Rl$;
\end{enumerate}
such that every subset $C\subseteq A$ of pairwise commuting elements is contained in some subset $\bar{C}\subseteq A$ of pairwise commuting elements which is a commutative C*-algebra with respect to the data above.
\end{definition}

The piecewise C*-algebras in which the relation $\commute$ is total are precisely the commutative C*-algebras $C(X)$. Our choice of the symbol ``$\commute$'' is explained by the special case of rank one projections, which commute if and only if they are either orthogonal ($\perp$) or parallel ($\parallel$).

\begin{definition}[\cite{piecewise}]
\label{phomdef}
Given piecewise $C^*$-algebras $A$ and $B$, a \emph{piecewise $*$-homomorphism} is a function $\zeta:A\rightarrow B$ such that
\begin{enumerate}
\item If $\alpha\commute\beta$ in $A$, then 
\be
\label{phom}
\zeta(\alpha) \commute \zeta(\beta),\qquad \zeta(\alpha\beta)=\zeta(\alpha)\zeta(\beta),\qquad \zeta(\alpha+\beta)=\zeta(\alpha)+\zeta(\beta).
\ee
\item $\zeta(z\alpha) = z \zeta(\alpha)$ for all $a\in A $ and $z\in \Cl$,
\item $\zeta(\alpha^*) = \zeta(\alpha)^*$ for all $\alpha\in A$.
\item $\zeta(1)=1$.
\end{enumerate}
\end{definition}

\begin{example}
It is well-known that there is no $*$-homomorphism $M_n\to\Cl$ for $n\geq 2$. The Kochen-Specker theorem~\cite{KS} states that for $n\geq 3$, there does not even exist a piecewise $*$-homomorphism $M_n\to\Cl$.
\end{example}

So piecewise $C^*$-algebras and piecewise $*$-homomorphisms form a category $\pCalg$. Still following~\cite{piecewise}, there is a forgetful functor $\Cl(-) : \Calg\to\pCalg$ sending every C*-algebra $A$ to its normal part,
\be
\label{normalpart}
\Cl(A)=\{\: \alpha\in A \:|\: \alpha\alpha^*=\alpha^*\alpha \:\}.
\ee
This set forms a piecewise C*-algebra by postulating that $\alpha \commute \beta$ holds whenever $\alpha$ and $\beta$ commute. $\Cl(-)$ is easily seen to be a faithful functor that reflects isomorphisms. In the language of \emph{property, structure and stuff}~\cite{propertystructurestuff}, this means that it forgets at most structure. So we may think of a C*-algebra as a piecewise C*-algebra together with additional structure, namely the specifications of sums and products of noncommuting elements.

\begin{example}
For $A,B\in\Calg$, any Jordan homomorphism $\Rl(A)\to \Rl(B)$ extends linearly to a piecewise $*$-homomorphism $\Cl(A)\to\Cl(B)$. For example, the transposition map $-^T:M_n\to M_n$ yields a piecewise $*$-homomorphism $\Cl(M_n)\to\Cl(M_n)$.
\end{example}

The discussion of Section~\ref{Calgsasfunctors} extends canonically to piecewise C*-algebras. To wit, Gelfand duality still implements an equivalence of $\CHs^\op$ with a full subcategory of $\pCalg$, so that for every $A\in\pCalg$ we can restrict the hom-functor
\[
\pCalg(-,A) \: : \: \pCalg^\op\to\Sets
\]
to a functor $\CHs\to\Sets$, which maps $X\in\CHs$ to the set of piecewise $*$-homomorphisms $C(X)\to A$. For $A\in\Calg$, this results precisely in the functor $\CHs\to\Sets$ that we already know from Section~\ref{Calgsasfunctors}, since then $\pCalg(C(X),A)=\Calg(C(X),A)$. In other words, we have a diagram of functors
\[
\xymatrix{	\Calg \ar[dr]_{\Cl(-)} \ar[rr]	&		& \Sets^{\CHs}	\\
						& \pCalg\ar[ur]	&		}
\]
In fact, the proof of Proposition~\ref{guarcommcrit} still goes through for piecewise C*-algebras. Hence the functor $\pCalg\to\Sets^\CHs$ actually lands in the full subcategory $\Sh(\CHs)$ as well, and the commutative triangle of functors can be taken to be
\[
\xymatrix{	\Calg \ar[dr]_{\Cl(-)} \ar[rr]	&		& \Sh(\CHs)	\\
						& \pCalg\ar[ur]	&		}
\]
We now investigate the functor on the right a bit further, finding that it is close to being an equivalence. In the following, we use the unit disk $\bigcirc\subseteq\Cl$. Since it is homeomorphic to the unit square $\Box$ that we have been working with until now, all previous statements apply likewise with $\Box$ replaced by $\bigcirc$.

\begin{theorem}
The functor $\pCalg\longrightarrow\Sh(\CHs)$ is fully faithful, with essential image given by all those $F\in\Sh(\CHs)$ for which the canonical map
\be
\label{circtimes}
\xymatrix{	F(\bigcirc\times\bigcirc) \ar[r]	& F(\bigcirc)\times F(\bigcirc)	}
\ee
is injective.
\label{pCalgthm}
\end{theorem}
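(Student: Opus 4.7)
My plan is to prove fully faithfulness by unpacking the natural transformation through evaluation on disks, and then handle the essential image characterization by directly constructing a piecewise C*-algebra from any sheaf with the stated injectivity. For faithfulness, observe that every element $\alpha$ of $A\in\pCalg$ becomes an element of $\bigcirc(A)$ after dividing by any $r\geq\|\alpha\|$ and, by the piecewise analog of Lemma~\ref{normalcorrespond}, is recovered by evaluating the corresponding piecewise $*$-homomorphism $C(\bigcirc)\to A$ on $\id_\bigcirc$. So any piecewise $*$-homomorphism $\zeta\colon A\to B$ is uniquely determined by the $\bigcirc$-component of the induced natural transformation together with the scaling maps $r\cdot\colon\bigcirc\to\bigcirc$, giving faithfulness. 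For fullness, given $\eta\colon F_A\to F_B$, I would define $\zeta(\alpha):= r\cdot\eta_\bigcirc(\alpha/r)$ for $r\geq\|\alpha\|$ (well-defined by naturality on $r\cdot$) and verify the axioms of Definition~\ref{phomdef} by applying naturality to the point map $\mathbf{1}\to\bigcirc$ picking $1$, to complex conjugation and scalar multiplication on $\bigcirc$, and to the addition and multiplication maps out of $\bigcirc\times\bigcirc$ (after suitable rescaling) applied to any $\gamma\in F_A(\bigcirc\times\bigcirc)$. The key point is that by Lemma~\ref{pairscorrespond}, such a $\gamma$ corresponds to a commuting pair $(\alpha,\beta)$ in $A$, and $\eta_{\bigcirc\times\bigcirc}(\gamma)\in F_B(\bigcirc\times\bigcirc)$ corresponds to a commuting pair in $B$; this simultaneously yields preservation of $\commute$ and the sum/product identities of~\eqref{phom}.

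For the essential image, necessity of injectivity of~\eqref{circtimes} for $F=F_A$ is immediate from Lemma~\ref{pairscorrespond}. For sufficiency, given such an $F$, I would construct $A$ with underlying set $\mathrm{colim}_r F(\overline{B_r})$, where the transitions are injective by the sheaf condition on singleton monic cones; with norm the infimum of admissible $r$; with involution and scalar multiplication obtained by applying $F$ to conjugation and dilation on $\bigcirc$; with commutativity $\alpha\commute\beta$ declared exactly when the pair lifts through $F$ of a product of two disks (uniquely, by the $n=2$ case of Lemma~\ref{injlem}); and with sum and product on commuting pairs obtained from $F$ applied to the corresponding maps on disk products. The natural isomorphism $F\cong F_A$ then follows because both sheaves agree on $\bigcirc$ by construction and are determined by that value via the sheaf condition on the cone $\{f\colon X\to\bigcirc\}$ (Lemmas~\ref{squarecover} and~\ref{guarcommsquare}), by the same diagram chase as in Lemma~\ref{evalbox}.

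The main obstacle will be verifying the defining condition of a piecewise C*-algebra: every pairwise commuting subset $\{\alpha_i\}_{i\in I}\subseteq A$ must lie in a commutative C*-subalgebra. For finite $I$ I would proceed by induction on $|I|$, using Lemma~\ref{injlem} to promote the injectivity of~\eqref{circtimes} to $F(\bigcirc^n)\to F(\bigcirc)^n$, and then combining the inductive lift in $F(\bigcirc^{n-1})$ with the pairs $(\alpha_i,\alpha_n)\in F(\bigcirc^2)$ for $i<n$ via the sheaf condition on a directed effective-monic cone on $\bigcirc^n$, such as the cone consisting of the projection $\bigcirc^n\to\bigcirc^{n-1}$ together with all coordinate-pair projections $\bigcirc^n\to\bigcirc^2$; directedness should follow by iterating Lemma~\ref{productcovers} and Lemma~\ref{guarcommsquare}. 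The singly generated subsheaf $F_{(\alpha_1,\ldots,\alpha_n)}$ is then representable by (the multi-element analog of) Proposition~\ref{singlygenerated}, yielding the required commutative C*-subalgebra via Gelfand duality. For infinite $I$, the sheaf condition on the cone of all finite tuplings from Lemma~\ref{Sinfty} glues the finite subalgebras into a common one. Pinning down the correct directed effective-monic cone for the inductive step, and verifying its directedness, is the most delicate point in the whole argument.
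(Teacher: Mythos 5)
Your proposal is correct and takes essentially the same route as the paper's proof: faithfulness via determination of a piecewise $*$-homomorphism on the unit ball, fullness by reconstructing $\zeta$ from $\eta_\bigcirc$ and naturality on the addition/multiplication maps out of $\bigcirc\times\bigcirc$ (plus Lemma~\ref{evalbox} to conclude $\eta=-(\zeta)$), and essential surjectivity onto the stated image by taking $F(\bigcirc)$ as the unit ball, declaring commutativity to be membership in the image of~\eqref{circtimes}, and using Proposition~\ref{singlygenerated}. The only divergence is minor: the paper disposes of the pairwise-commuting-family axiom in a single step, with no finite induction and no appeal to Lemma~\ref{Sinfty}, by applying the sheaf condition to the directed effective-monic cone of all pairings $(p_i,p_j):\bigcirc^I\to\bigcirc\times\bigcirc$ on the full $I$-fold power and then invoking Proposition~\ref{singlygenerated} with $W=\bigcirc^I$, so the delicate auxiliary cone you single out as the hardest point is not actually needed.
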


So this functor forgets at most property, namely the property of injectivity of~\eqref{circtimes} as investigated in Lemma~\ref{injlem}. This property is equivalent to $F$ being separated (in the presheaf sense) on the effective-monic cones. It seems natural to suspect that not every sheaf on $\CHs$ is separated in this sense, but this remains open. So it is also conceivable that $\pCalg\to\Sh(\CHs)$ actually is an equivalence of categories.

In particular, this shows that $\cCalg$ is dense in $\pCalg$, i.e.~that the canonical functor $\pCalg\to\Sets^{\cCalg^{\op}}$ is fully faithful. For a potentially related result of a similar flavour, see~\cite[Corollary~8]{SU}.

\begin{proof}
A piecewise $*$-homomorphism $\zeta:A\to B$ is determined by its action on the unit ball, which is the set of elements with spectrum in $\bigcirc$. In particular, $\zeta$ is uniquely determined by the associated transformation $-(\zeta) : -(A)\to -(B)$, so that the functor under consideration is faithful.

Concerning fullness, let $\eta : -(A)\to -(B)$ be a natural transformation. Its component at $\bigcirc$ is a map $\eta_\bigcirc : \bigcirc(A)\to \bigcirc(B)$. The pairs of commuting elements $\alpha,\beta\in \bigcirc(A)$ are precisely those that are in the image of the canonical map
\[
(\bigcirc\times\bigcirc)(A)\longrightarrow \bigcirc(A)\times \bigcirc(A),
\]
and hence the requirements~\eqref{phom} follow from naturality and the consideration of functions like~\eqref{addition} and~\eqref{multiplication}. The other axioms are likewise simple consequences of naturality. This exhibits a piecewise $*$-homomorphism $\zeta:A\to B$ such that $\eta_\bigcirc$ coincides with $\bigcirc(\zeta)$. Then by Lemma~\ref{evalbox}, we have $\eta = -(\zeta)$.

Finally, we show that every $F\in\Sh(\CHs)$ for which~\eqref{circtimes} is injective is isomorphic to $-(A)$ for some $A\in\pCalg$. Concretely, we construct a piecewise C*-algebra $A$ by first defining its unit ball to be
\[
\bigcirc(A) \defin F(\bigcirc).
\]
This set comes equipped with a commutation relation: $\alpha\commute\beta$ is declared to hold for $\alpha,\beta\in A$ precisely when $(\alpha,\beta)$ is in the image of~\eqref{circtimes}. In this case, we can define the sum $\alpha+\beta$ and the product $\alpha\beta$ using the functoriality on maps such as~\eqref{addition} and~\eqref{multiplication}. Likewise there is a scalar multiplication by numbers $z\in \bigcirc$ and an involution arising from functoriality on the complex conjugation map $\bigcirc\to\bigcirc$.

Now defining $A$ to consist of pairs $(\alpha,z)\in F(\bigcirc)\times \Rl_{>0}$, modulo the equivalence $(\alpha,z)\sim(s a,s z)$ for all $s\in(0,1)$, results in a piecewise C*-algebra: the relevant structure of Definition~\ref{piecewisedef} extends canonically from $\bigcirc(A)$ to all of $A$, and we also claim that any set $\{\gamma_i\}_{i\in I}\subseteq \bigcirc(A)$ of pairwise commuting elements is contained in a commutative C*-subalgebra. We write this family as a single element of the $I$-fold product,
\[
\gamma \in F(\bigcirc)^I.
\]
The cone $\{(p_i,p_j) : \bigcirc^I\to\bigcirc\times\bigcirc\}_{i,j\in I}$ consisting of all pairings of projections $p_i : \bigcirc^I\to \bigcirc$ is effective-monic and directed. By the commutativity assumption on $\gamma$, the pair $(\gamma_i,\gamma_j)\in F(\bigcirc)\times F(\bigcirc)$ comes from an element of $F(\bigcirc\times\bigcirc)$. Hence by the sheaf condition, $\gamma$ is actually the image of an element $\gamma'\in F\left(\bigcirc^I\right)$ under the canonical map. The subfunctor $F_{\gamma'}\subseteq F$, as in Proposition~\ref{singlygenerated}, is representable. It corresponds to the commutative C*-subalgebra generated by the $\gamma_i$.
\end{proof}

The following criterion---due to Heunen and Reyes---describes the image of the functor $\Cl(-):\Calg\to\pCalg$ at the level of morphisms.

\begin{lemma}[{\cite[Proposition~4.13]{HR}}]
\label{fextend}
For $A,B\in\Calg$, a piecewise $*$-homomorphism $\zeta:\Cl(A)\to \Cl(B)$ extends to a $*$-homomorphism $A\to B$ if and only if it is additive on self-adjoints and multiplicative on unitaries.
\end{lemma}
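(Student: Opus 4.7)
The ``only if'' direction is immediate: the restriction of any $*$-homomorphism to self-adjoints is additive, and its restriction to unitaries is multiplicative.

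For the converse, given such a $\zeta$, the candidate extension is the map $\tilde\zeta : A \to B$ defined by
\[
\tilde\zeta(\alpha) \defin \zeta\!\left(\tfrac{\alpha + \alpha^*}{2}\right) + i\,\zeta\!\left(\tfrac{\alpha - \alpha^*}{2i}\right),
\]
which makes sense because both arguments are self-adjoint. My first checkpoint is that $\tilde\zeta$ restricts to $\zeta$ on $\Cl(A)$: for normal $\alpha$, its self-adjoint and anti-self-adjoint parts commute, so piecewise additivity and piecewise scalar linearity of $\zeta$ on this commuting pair yield the required equality. This also forces uniqueness of any extension, since in any $*$-homomorphism $A\to B$ the two summands on the right must coincide with $\zeta$ applied to self-adjoints.

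Next I verify the easy algebraic axioms for $\tilde\zeta$. Additivity on all of $A$ reduces component-wise to the hypothesised additivity of $\zeta$ on self-adjoints (noting that real and imaginary parts of a sum decompose correctly); $\Rl$-linearity is then immediate, and $\tilde\zeta(i\alpha)=i\tilde\zeta(\alpha)$ just interchanges the two components with appropriate signs. Preservation of $*$ and of $1$ are direct from the definition together with $\zeta(1)=1$.

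The substantive step is multiplicativity $\tilde\zeta(\alpha\beta) = \tilde\zeta(\alpha)\tilde\zeta(\beta)$, and this is where the unitary hypothesis is essential. The key classical input is that every element of a unital C*-algebra is a finite $\Cl$-linear combination of unitaries: by functional calculus, a self-adjoint $a$ with $\|a\|\leq 1$ equals $(u+u^*)/2$ for the unitary $u\defin a+i\sqrt{1-a^2}$, so after splitting into real and imaginary parts and rescaling, any $\alpha\in A$ is a finite combination $\alpha=\sum_k c_k u_k$ of at most four unitaries, and similarly $\beta=\sum_\ell d_\ell v_\ell$. Then $\alpha\beta=\sum_{k,\ell}c_k d_\ell\, u_k v_\ell$ with each $u_k v_\ell$ unitary, hence normal, so one may chain
\[
\tilde\zeta(\alpha\beta) \;=\; \sum_{k,\ell} c_k d_\ell\, \tilde\zeta(u_k v_\ell) \;=\; \sum_{k,\ell} c_k d_\ell\, \zeta(u_k)\zeta(v_\ell) \;=\; \tilde\zeta(\alpha)\tilde\zeta(\beta),
\]
using in turn $\Cl$-linearity of $\tilde\zeta$, agreement with $\zeta$ on normal elements, multiplicativity of $\zeta$ on unitaries, distributivity in $B$, and $\Cl$-linearity again. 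The care needed is to ensure that every invocation of multiplicativity is applied to a pair of unitaries, which the decomposition arranges by construction; this is precisely why a multiplicativity hypothesis on a smaller class (say, unitaries) suffices rather than one on all normal elements. Boundedness of $\tilde\zeta$ is automatic once it is known to be a $*$-homomorphism between C*-algebras.
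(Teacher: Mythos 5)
Your proof is correct and takes essentially the same route as the paper's: the same forced candidate extension $\zeta(a)+i\zeta(b)$, $\Cl$-linearity from additivity on self-adjoints, agreement with $\zeta$ on normal (in particular unitary) elements via the piecewise structure, and multiplicativity obtained from the functional-calculus decomposition of a self-adjoint contraction as $\tfrac{1}{2}(u+u^*)$ with $u$ unitary, expanding products of unitaries and invoking multiplicativity of $\zeta$ on unitaries. The only cosmetic difference is that you decompose arbitrary elements into linear combinations of (at most four) unitaries at once, whereas the paper first reduces to self-adjoint contractions $\alpha=\nu+\nu^*$, $\beta=\tau+\tau^*$ and then performs the same expansion.
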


By faithfulness of $\Calg\to\pCalg$, we already know such an extension to be unique if it exists.

\begin{proof}
The `only if' part is clear, so we focus on the `if' direction. Every element of $A$ is of the form $a+ib$ for $a,b\in \Rl(A)$, and linearity forces us to define the candidate extension of $f$ by
\[
\hat{\zeta}(a+ib) \defin \zeta(a) + i\zeta(b).
\]
In this way, $\hat{\zeta}$ becomes linear due to the first assumption, and is evidently involutive and unital. On a unitary $\nu$, we have $\hat{\zeta}(\nu)=\zeta(\nu)$, since
\begin{align*}
\hat{\zeta}(\nu) 	& = \hat{\zeta}\left( \frac{\nu+\nu^*}{2} + i\frac{\nu-\nu^*}{2}\right) = \frac{1}{2}\zeta(\nu+\nu^*) + \frac{1}{2}\zeta(\nu-\nu^*) \\
 		& = \frac{1}{2}\zeta(\nu) + \frac{1}{2}\zeta(\nu^*) + \frac{1}{2}\zeta(\nu) - \frac{1}{2}\zeta(\nu^*) = \zeta(\nu),
\end{align*}
where the third step uses $\nu\commute \nu^*$.

We finish the proof by arguing that $\hat{\zeta}$ is multiplicative on two arbitrary elements $\alpha,\beta\in [-1,+1](A)$, which is enough to prove multiplicativity generally, and hence to show that $\hat{\zeta}$ is indeed a $*$-homomorphism. By functional calculus, we can find unitaries $\nu,\tau\in \Tl(A)$ such that $\alpha=\nu+\nu^*$ and $\beta=\tau+\tau^*$. Then
\begin{align*}
\hat{\zeta}(\alpha\beta) = \hat{\zeta}\left( (\nu+\nu^*)(\tau+\tau^*) \right) 	& = \hat{\zeta}\left( \nu\tau + \nu\tau^* + \nu^* \tau + \nu^* \tau^* \right) \\
					& = \hat{\zeta}(\nu\tau) + \hat{\zeta}(\nu\tau^*) + \hat{\zeta}(\nu^*\tau) + \hat{\zeta}(\nu^* \tau^*) \\
					& = \zeta(\nu\tau) + \zeta(\nu\tau^*) + \zeta(\nu^*\tau) + \zeta(\nu^*\tau^*) \\
					& = \zeta(\nu)\zeta(\tau) + \zeta(\nu)\zeta(\tau^*) + \zeta(\nu^*)\zeta(\tau) + \zeta(\nu^*)\zeta(\tau^*) \\
					& = (\zeta(\nu) + \zeta(\nu^*))(\zeta(\tau) + \zeta(\tau^*)) \\
					& = (\hat{\zeta}(\nu) + \hat{\zeta}(\nu^*))(\hat{\zeta}(\tau) + \hat{\zeta}(\tau^*)) \\
					& = \hat{\zeta}(\nu + \nu^*) \hat{\zeta}(\tau + \tau^*) = \hat{\zeta}(\alpha)\hat{\zeta}(\beta),
\end{align*}
where we have used that $\hat{\zeta}$ coincides with $\zeta$ on unitaries (third and sixth line) and the assumption of multiplicativity on unitaries (fourth line).
\end{proof}

In fact, this result can be improved upon:

\begin{proposition}
A piecewise $*$-homomorphism $\zeta:\Cl(A)\to\Cl(B)$ extends to a $*$-homomorphism $A\to B$ if and only if it is multiplicative on unitaries.
\label{fextend2}
\end{proposition}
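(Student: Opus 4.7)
The plan is to reduce to Lemma~\ref{fextend} by showing that a piecewise $*$-homomorphism $\zeta$ which is multiplicative on unitaries is automatically additive on self-adjoints. The key tool is the Lie--Trotter product formula, which transfers the noncommutative additivity problem on self-adjoints into a statement purely about products of unitaries. A prerequisite is norm-continuity of $\zeta$ on $\Tl(A)$, and this is where the essential content lies.

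First I would show that $\zeta$ is norm-continuous on $\Tl(A)$. By multiplicativity on unitaries---combined with $\zeta(\nu^*)=\zeta(\nu)^*$, which is part of the piecewise $*$-homomorphism structure---it suffices to establish continuity at $1$. If $\nu_n\to 1$ in $\Tl(A)$, then for $n$ large the spectrum of $\nu_n$ avoids $-1$, so one can write $\nu_n = e^{ic_n}$ via the principal branch of the logarithm (functional calculus in the singly-generated commutative C*-subalgebra), with $c_n\in\Rl(A)$ tending to zero in norm. Since $\zeta$ restricted to each such subalgebra is a unital $*$-homomorphism and therefore contractive, $\|\zeta(c_n)\|\leq\|c_n\|\to 0$, and hence $\zeta(\nu_n)=e^{i\zeta(c_n)}\to 1$. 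Left multiplication by $\zeta(\nu)$ then propagates continuity everywhere on $\Tl(A)$.

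Next I would invoke the Lie product formula: for self-adjoints $a,b\in\Rl(A)$ and $t\in\Rl$,
\[
e^{it(a+b)} \;=\; \lim_{n\to\infty} \bigl(e^{ita/n}\,e^{itb/n}\bigr)^n
\]
in $A$. All factors are unitaries, so multiplicativity of $\zeta$ on unitaries together with preservation of single-element functional calculus gives $\zeta\bigl((e^{ita/n}\,e^{itb/n})^n\bigr)=\bigl(e^{it\zeta(a)/n}\,e^{it\zeta(b)/n}\bigr)^n$. Passing to the limit, Step~1 yields $\zeta(e^{it(a+b)})=e^{it\zeta(a+b)}$ on the left, while the Lie product formula in $B$ yields $e^{it(\zeta(a)+\zeta(b))}$ on the right. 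So $e^{it\zeta(a+b)}=e^{it(\zeta(a)+\zeta(b))}$ for every $t\in\Rl$; differentiating at $t=0$ (equivalently, invoking uniqueness of the infinitesimal generator of a norm-continuous one-parameter unitary group) gives $\zeta(a+b)=\zeta(a)+\zeta(b)$. Lemma~\ref{fextend} then supplies the desired extension to a $*$-homomorphism $A\to B$.

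The main obstacle I anticipate is Step~1. A piecewise $*$-homomorphism is a priori only well-behaved on commuting elements, so there is no guaranteed global norm-continuity across non-commuting regimes. The logarithm trick works precisely because each single unitary lies in a commutative subalgebra on which $\zeta$ restricts to a genuine $*$-homomorphism, and contractivity there transports smallness from $c_n$ to $\zeta(c_n)$; once continuity at the identity is in hand, the group structure takes over. With this in place, Step~2 is essentially routine.
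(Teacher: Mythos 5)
Your proof is correct, and it performs the same reduction as the paper (by Lemma~\ref{fextend}, only additivity on self-adjoints needs to be proved), but the technical route is genuinely different. The paper works with the single unitary word $e^{it(\alpha+\beta)}e^{-it\alpha}e^{-it\beta}$: by the exponential series it is $1+O(t^2)$ as $t\to 0$, and since $\spc(\zeta(\nu))\subseteq\spc(\nu)$ for every unitary $\nu$ (so $\|\zeta(\nu)-1\|\le\|\nu-1\|$), its image $e^{it\zeta(\alpha+\beta)}e^{-it\zeta(\alpha)}e^{-it\zeta(\beta)}$ is also $1+O(t^2)$; matching the first-order coefficient yields $\zeta(\alpha+\beta)=\zeta(\alpha)+\zeta(\beta)$ with no limit over $n$ and no continuity statement about $\zeta$ beyond this spectral contraction. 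Your route via the Lie--Trotter formula instead requires interchanging $\zeta$ with a norm limit of unitaries, hence your Step~1; that step is sound (principal logarithm, contractivity of $\zeta$ on each singly generated commutative C*-subalgebra, multiplicativity to propagate from $1$), and it is really the same spectral-contraction ingredient the paper uses, repackaged as a continuity lemma for $\zeta$ on $\Tl(A)$. One small misattribution: the identity $\zeta(e^{it(a+b)})=e^{it\zeta(a+b)}$ follows from preservation of single-variable functional calculus, not from Step~1; Step~1 is what justifies $\zeta(\lim_n\,\cdot\,)=\lim_n\zeta(\,\cdot\,)$. As for what each approach buys: the paper's second-order estimate is shorter and self-contained, while yours isolates a reusable fact (norm-continuity of $\zeta$ on the unitary group) at the cost of invoking the Trotter formula; like the paper's proof, yours only ever uses multiplicativity on products of exponentials, i.e.\ on the connected component of $1\in\Tl(A)$, in line with the remark the paper makes after this proposition.
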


\begin{proof}
By the lemma, it is enough to prove that such a $\zeta$ is additive on self-adjoints.

We use the following fact, which follows from the exponential series: for every $\alpha,\beta\in\Rl(A)$ and real parameter $t\in\Rl$, the unitary
\[
e^{it(\alpha + \beta)} e^{-it\alpha} e^{-it\beta}
\]
differs from $1$ by at most $O(t^2)$ as $t\to 0$. Since $\zeta$ preserves the spectrum of unitaries, we conclude that also
\[
\zeta\left(e^{it(\alpha + \beta)} e^{-it\alpha} e^{-it\beta}\right) = e^{it\zeta(\alpha + \beta)} e^{-it\zeta(\alpha)} e^{-it\zeta(\beta)}
\]
is a unitary that differs from $1$ by at most $O(t^2)$. By the same argument as above, this implies $\zeta(\alpha + \beta) = \zeta(\alpha) + \zeta(\beta)$, as was to be shown.
\end{proof}

As the proof shows, we actually only need multiplicativity on products of exponentials, i.e.~on the connected component of the identity $1\in\Tl(A)$. Also, the method of proof suggests a relation to the Baker-Campbell-Hausdorff formula, which may be worth exploring further.

Finally, it is worth noting that a piecewise $*$-homomorphism $\zeta:\Cl(A)\to\Cl(B)$ is additive on self-adjoints if and only if it is a Jordan homomorphism: the condition $\zeta(\alpha^2) = \zeta(\alpha)^2$ for $\alpha\in\Rl(A)$ is automatic since $\zeta$ preserves functional calculus.

Let us end this section by stating its main open problem:

\begin{problem}
\label{pCalgprob}
Is the functor $\pCalg\to\Sh(\CHs)$ an equivalence of categories, i.e.~does \emph{every} sheaf on $\CHs$ satisfy the injectivity condition of Lemma~\ref{injlem}?
\end{problem}

\newpage
\section{Almost C*-algebras as piecewise C*-algebras with self-action}
\label{secsaC}

What we have learnt so far is that considering a C*-algebra $A$ as a sheaf $-(A):\CHs\to\Sets$, or equivalently as a piecewise C*-algebra, recovers the entire `commutative part' of the C*-algebra structure of $A$. Nevertheless, the functor $\Calg\to\pCalg$ is not full, which indicates that part of the relevant structure is lost: for example, a C*-algebra $A$ is in general not isomorphic to $A^\op$~\cite{phillips}, although the two are canonically isomorphic as piecewise C*-algebras. This raises the question: which natural piece of additional structure on a sheaf $\CHs\to\Sets$ or piecewise C*-algebra would let us recover the missing information?

Of course, what kind of additional structure counts as `natural' is a subjective matter. But again, we can take inspiration from quantum physics: which additional structure would have a clear physical interpretation? Our following proposal is based on a central feature of quantum mechanics: observables generate dynamics, in the sense that to every observable (self-adjoint operator) $\alpha\in\Rl(A)$, one associates the one-parameter group of inner automorphisms given by
\be
\label{autos}
\Rl\times A \longrightarrow A,\qquad (t,\beta) \longmapsto e^{i\alpha t} \beta e^{-i\alpha t}.
\ee
For example, if $\alpha$ is energy, then the resulting one-parameter family of automorphisms is given precisely by time translations, i.e.~by the inherent dynamics of the system under consideration. If $\alpha$ is a component of angular momentum, then the resulting family of automorphisms are the rotations around that axis. As is obvious from~\eqref{autos}, this natural way in which $A$ acts on itself by inner automorphisms is a purely noncommutative feature, in that it becomes trivial in the commutative case.

More formally, the construction of~\eqref{autos} really consists of two parts: first, for every $t\in\Rl$, one forms the unitary $\nu\defin e^{-i\alpha t}$; since this is functional calculus, it is captured by the functoriality $\CHs\to\Sets$. Second, one lets $\nu$ act on $A$ via conjugation, as $\beta\mapsto \nu^* \beta\nu$. This part is not captured by what we have discussed so far, and hence we axiomatize it as an additional piece of structure. Our definition is similar in spirit to the `active lattices' of Heunen and Reyes~\cite{HR} and also seems related to~\cite[Section~VI]{BMU}.

\begin{definition}
\label{almostdef}
An \emph{almost C*-algebra} is a pair $(A,\mathfrak{a})$ consisting of a piecewise C*-algebra $A\in\pCalg$ and a \emph{self-action} of $A$, which is a map
\[
\mathfrak{a} : \Tl(A) \longrightarrow \pCalg(A,A)
\]
assigning to every unitary $\nu\in \Tl(A)$ a piecewise automorphism $\mathfrak{a}(\nu) : A \to A$ such that  
\begin{itemize}
	\item $\nu$ commutes with $\tau\in\Tl(A)$ if and only if $\mathfrak{a}(\nu)(\tau) = \tau$;
	\item in this case, $\mathfrak{a}(\nu\tau) = \mathfrak{a}(\nu)\mathfrak{a}(\tau)$.
\end{itemize}
\end{definition}

So $\mathfrak{a}$ must satisfy two equations on commuting unitaries. The first equation implies that a commutative C*-algebra, considered as a piecewise C*-algebra, can act on itself only trivially; and conversely, if the self-action is trivial in the sense that every $\mathfrak{a}(\nu)$ is the identity, then $A$ must be commutative. The second equation implies that if $\nu$ and $\tau$ commute, then also their actions commute:
\[
\mathfrak{a}(\nu)\mathfrak{a}(\tau) = \mathfrak{a}(\nu\tau) = \mathfrak{a}(\tau\nu) = \mathfrak{a}(\tau)\mathfrak{a}(\nu).
\]
While introducing a self-action $\mathfrak{a} : \Tl(A)\longrightarrow\pCalg(A,A)$ can be physically motivated by the above discussion; we expect the appearance of $\Tl$ to be related to Pontryagin duality. The physical interpretation of the first axiom could be related to Noether's theorem.

Almost C*-algebras form a category denoted $\aCalg$ as follows:

\begin{definition}
\label{ahomdef}
An \emph{almost $*$-homomorphism} $\zeta:(A,\mathfrak{a})\to (B,\mathfrak{b})$ is a piecewise $*$-homomorphism $\zeta:A\to B$ which preserves the self-actions in the sense that
\be
\label{ahom}
\mathfrak{b}(\zeta(\nu)) (\zeta(\alpha)) = \zeta(\mathfrak{a}(\nu)(\alpha)).
\ee
\end{definition}

The forgetful functor $\Calg\to\pCalg$ factors through $\aCalg$ by associating to every C*-algebra $A$ and unitary $\nu\in \Tl(A)$ its conjugation action,
\[
\mathfrak{a}(\nu)(\alpha) \defin \nu^* \alpha \nu.
\]
Every $*$-homomorphism $\zeta:A\to B$ is compatible with the resulting self-actions: the condition~\eqref{ahom} becomes simply
\be
\label{presconj}
\zeta(\nu)^* \zeta(\alpha) \zeta(\nu) = \zeta(\nu^* \alpha \nu).
\ee

Our main question is whether the additional structure of a self-action that is present in an almost C*-algebras is sufficient to recover the entire C*-algebra structure:

\begin{problem}
	\label{aCalgprob}
Is the forgetful functor $\Calg\to\aCalg$ an equivalence of categories?
\end{problem}

In order for this to be the case, one would have to show that the functor is both fully faithful and essentially surjective. While the latter question is wide open, it is clear that the functor is faithful, since already the forgetful functor $\Calg\to\pCalg$ is. We can also prove fullness in a W*-algebra setting:

\begin{theorem}
\label{Wfull}
$\Calg\to\aCalg$ is fully faithful on morphisms out of any W*-algebra.
\end{theorem}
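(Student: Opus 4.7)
The plan is to invoke Proposition~\ref{fextend2}, which reduces the problem to showing that an almost $*$-homomorphism $\zeta:A\to B$ out of a W*-algebra $A$ is multiplicative on unitaries. Since faithfulness of $\Calg\to\aCalg$ is inherited from $\Calg\to\pCalg$, the entire content of the theorem is fullness, and this reduces to the single assertion $\zeta(\nu\tau)=\zeta(\nu)\zeta(\tau)$ for all $\nu,\tau\in\Tl(A)$.

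I would first extract the purely algebraic consequences of the self-action preservation
\[
\zeta(\nu^*\alpha\nu)=\zeta(\nu)^*\zeta(\alpha)\zeta(\nu),\qquad\nu\in\Tl(A),\ \alpha\in\Cl(A).
\]
Applying this with $\alpha=\tau$ unitary via the identity $\nu\tau=\nu(\tau\nu)\nu^{-1}$ gives $\zeta(\nu\tau)=\zeta(\nu)\zeta(\tau\nu)\zeta(\nu)^{-1}$, and applying it iteratively to $(\nu\tau)^{-1}\alpha(\nu\tau)=\tau^{-1}(\nu^{-1}\alpha\nu)\tau$ shows that the defect $d(\nu,\tau)\defin\zeta(\nu)\zeta(\tau)\zeta(\nu\tau)^{-1}$ is a unitary in $B$ which commutes with every element of $\zeta(\Cl(A))$, equals $1$ whenever $\nu\tau=\tau\nu$, and satisfies a normalized $2$-cocycle identity on $\Tl(A)\times\Tl(A)$. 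The objective is thus to show that this central-valued cocycle is trivial.

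The W*-algebra hypothesis on $A$ enters to force $d\equiv 1$; the approach would follow the ``active lattice'' reconstruction philosophy of~\cite{HR}. Using Borel functional calculus and the norm-density of finite-spectrum unitaries in $\Tl(A)$, combined with norm-continuity of $\zeta$ on normal elements (which follows from spectrum-preservation), one reduces to the case $\nu=\sum_i e^{i\theta_i}p_i$ and $\tau=\sum_j e^{i\phi_j}q_j$ with finite orthogonal projection partitions $\{p_i\},\{q_j\}$ of $1$. The self-action preservation applied to the symmetries $s_p\defin 2p-1\in\Tl(A)$ then yields, for each relevant projection $p$,
\[
\zeta(s_p\beta s_p)=(2\zeta(p)-1)\,\zeta(\beta)\,(2\zeta(p)-1),\qquad\beta\in\Cl(A),
\]
and combining these relations along a joint refinement of the two projection families---working inside corners $pAp$ via non-unital $*$-homomorphisms, as flagged in the introduction---should pin down $d(\nu,\tau)=1$ for finite-spectrum unitaries, and hence in general by continuity.

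The main obstacle I expect is the treatment of the off-diagonal pieces $p_i\tau q_j$: these are neither normal nor contained in any commutative subalgebra of $A$, so the piecewise structure alone cannot directly handle them. The self-action through the symmetries $s_{p_i}$ is the tool that reaches these off-diagonal pieces, but reconciling all the resulting corner-wise constraints into the single identity $d(\nu,\tau)=1$ requires the W*-completeness of $A$ in an essential way---specifically, that its projection lattice is rich enough to connect arbitrary unitaries via conjugations by symmetries. Making this last step rigorous is the technical heart of the proof, and presumably the reason that the analogous statement in $\Calg$ (without the W*-hypothesis) remains open.
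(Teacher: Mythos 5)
Your reduction to Proposition~\ref{fextend2} and the introduction of the central-valued defect $d(\nu,\tau)\defin\zeta(\nu)\zeta(\tau)\zeta(\nu\tau)^{-1}$ are fine as far as they go, but this is exactly the cocycle $c(\nu,\tau)$ that the paper writes down \emph{after} the proof of Theorem~\ref{Wfull}, where it is stated explicitly that we do not know how to conclude from the cocycle identity that $\Tl(\zeta)$ is multiplicative; it is the motivation for the remaining open problem, not a proof strategy that is known to close. Your own sketch confirms this: the step ``combining the corner-wise constraints should pin down $d(\nu,\tau)=1$'' is precisely the missing argument, and the obstruction you name is real and not circumvented by the relations you write. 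The off-diagonal pieces $p_i\tau q_j$ are not normal, hence not in the domain of $\zeta$ at all, and conjugation by the symmetries $s_{p_i}$ only produces identities among values of $\zeta$ on normal elements; these give centrality of $d$ (which you already have) but no new information forcing $d=1$. The final continuity step is also unjustified: spectrum preservation makes $\zeta$ contractive on each commutative C*-subalgebra separately, but $\zeta$ is not known to be norm-continuous as a map on $\Cl(A)$, so from $\nu_n\tau_n\to\nu\tau$ (with $\nu_n,\tau_n$ of finite spectrum) you cannot conclude $\zeta(\nu_n\tau_n)\to\zeta(\nu\tau)$, since these products need not lie in a common commutative subalgebra.

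The paper's actual proof takes a different route in which the W*-hypothesis enters through specific theorems rather than through an informal ``richness of the projection lattice.'' For $A$ without type $I_2$ summand, one composes $\zeta$ with states of $B$ to obtain quasi-linear functionals, and the generalized Gleason theorem (in Hamhalter's formulation~\cite{hamhalter}) yields linearity of $\Rl(\zeta)$; thus $\zeta$ is a Jordan homomorphism, and St{\o}rmer's theorem~\cite{stormer} splits it, via a projection commuting with the range, into a $*$-homomorphism plus a $*$-anti-homomorphism. Only at this point is the self-action used: the identity $\zeta(\nu)^*\zeta(\alpha)\zeta(\nu)=\zeta(\nu)\zeta(\alpha)\zeta(\nu)^*$ for the anti-homomorphic part shows $\zeta(\alpha)$ commutes with $\zeta(\nu^2)$ for all unitaries $\nu$, hence (via exponential unitaries) that this part has commutative range and is a homomorphism after all. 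The type $I_2$ case, where Gleason-type results fail, is handled separately: first $M_2$ by hand through the Clifford generators $\sigma_x,\sigma_y$ and the self-action, then $L^\infty(\Omega,\mu,M_2)$ through simple functions and a measurable diagonalization, and finally a direct-sum argument. Your proposal bypasses all of this machinery but, as it stands, stops exactly where the difficulty begins, so it does not constitute a proof.
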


This result is similar to~\cite[Theorem~4.11]{HR}, but does not directly follow from it\footnote{This is because the notion of `active lattice' of~\cite{HR} includes a group that acts on the lattice, and a morphism of active lattices in particular is \emph{assumed} to be a homomorphism of the corresponding groups. If we assumed something analogous in our definition of almost C*-algebra, the fullness of the forgetful functor would simply follow from Proposition~\ref{fextend2}.}.

\begin{proof}
We need to show surjectivity, i.e.~if $\zeta:\Cl(A)\to \Cl(B)$ for a W*-algebra $A$ is a piecewise $*$-homomorphism which satisfies~\eqref{presconj}, then $\zeta$ extends to a $*$-homomorphism $A\to B$. Let us first consider the case that $A$ contains no direct summand of type $I_2$. Then for every state $\phi:B\to\Cl$, the map
\be
\label{quasifun}
\alpha + i\beta \longmapsto \phi(\zeta(\alpha) + i\zeta(\beta))
\ee
for $\alpha,\beta\in \Rl(A)$ is a quasi-linear functional on $A$ in the sense of~\cite[Definition~5.2.5]{hamhalter}, and therefore is uniquely determined by its values on the projections $\mathbf{2}(A)$~\cite[Proposition~5.2.6]{hamhalter}. On the other hand, by the generalized Gleason theorem~\cite[Theorem~5.2.4]{hamhalter}, this map $\mathbf{2}(A)\to\Rl$ uniquely extends to a state $A\to\Rl$. In conclusion, composition with $\zeta$ takes states on $B$ to states on $A$, and hence $\Rl(\zeta):\Rl(A)\to\Rl(B)$ is linear.

On $\Rl(A)$, we furthermore have $\zeta(\alpha^2) = \zeta(\alpha)^2$, which makes $\zeta$ into a Jordan homomorphism. By a deep result of St{\o}rmer~\cite[Theorem~3.3]{stormer}, this means that there exists a projection $\pi\in \mathbf{2}(B)$, commuting with the range of $\zeta$, such that $\alpha\mapsto \pi\zeta(\alpha)$ uniquely extends to a (generally nonunital) $*$-homomorphism, and similarly $\alpha\mapsto (1-\pi)\zeta(\alpha)$ uniquely extends to a (generally nonunital) $*$-anti-homomorphism. In other words, $\zeta$ decomposes into the sum of the restriction (to normal elements) of a $*$-homomorphism and a $*$-anti-homomorphism. So far, we have only made use of the assumption that $\zeta$ is a piecewise $*$-homomorphism. 

Hence in order to complete the proof in the case of $A$ without type $I_2$ summand, working with the corner $(1-\pi)A(1-\pi)$ in place of $A$ itself shows that it is enough to consider the case $\pi=0$, i.e.~that $\zeta$ is the restriction of a $*$-anti-homomorphism. In particular,
\[
\zeta(\nu)^* \zeta(\alpha) \zeta(\nu) \stackrel{\eqref{presconj}}{=} \zeta(\nu^*\alpha\nu) = \zeta(\nu) \zeta(\alpha) \zeta(\nu)^*, 
\]
and therefore $\zeta(\alpha) \zeta(\nu^2) = \zeta(\nu^2) \zeta(\alpha)$ for all $\nu\in\Tl(A)$ and $\alpha\in\Cl(A)$. Since every exponential unitary $e^{i\beta}$ is the square of another unitary, we know that $\zeta(\alpha)$ commutes with every exponential unitary. Since every element of $A$ is a linear combination of exponential unitaries, we conclude that $\zeta(\alpha)$ commutes with $\zeta(\beta)$ for every $\beta\in\Cl(A)$. Hence the range of $\zeta$ is commutative. In particular, $\zeta$ is also the restriction of a $*$-homomorphism, which completes the proof in the present case.

Now consider the case of an almost $*$-homomorphism $\zeta:\Cl(M_2)\to \Cl(B)$. Due to the isomorphism $M_2\cong \mathrm{Cl}(\Rl^2)\otimes\Cl$ with a complexified Clifford algebra, $M_2$ is freely generated as a C*-algebra by two self-adjoints $\sigma_x$ and $\sigma_y$ subject to the relations
\[
\sigma_x^2 = \sigma_y^2 = 1,\qquad \sigma_x \sigma_y + \sigma_y \sigma_x = 0.
\]
Since $\zeta$ commutes with functional calculus, the first two equations are clearly preserved by $\zeta$ in the sense that $\zeta(\sigma_x)^2 = \zeta(\sigma_y)^2 = 1$. Concerning the third equation, we know
\[
-\zeta(\sigma_x) = \zeta(-\sigma_x) = \zeta(\sigma_y \sigma_x \sigma_y) \stackrel{\eqref{presconj}}{=} \zeta(\sigma_y) \zeta(\sigma_x) \zeta(\sigma_y) .
\]
Hence $\zeta(\sigma_x) \zeta(\sigma_y) + \zeta(\sigma_y) \zeta(\sigma_x) = 0$ due to $\zeta(\sigma_y)^2 = 1$. Therefore the values $\zeta(\sigma_x)$ and $\zeta(\sigma_y)$ extend uniquely to a $*$-homomorphism $\hat{\zeta} : M_2\to B$; the problem is to show that this coincides with the original $\zeta$ on normal elements. Since any symmetry $\nu\in \{-1,+1\}(M_2)$ is conjugate to $\sigma_x$, we certainly have $\hat{\zeta}(\nu) = \zeta(\nu)$ by~\eqref{presconj} and the assumption $\hat{\zeta}(\sigma_x) = \zeta(\sigma_x)$. But because in the special case of $M_2$, every normal element can be obtained from a symmetry by functional calculus, and both $\zeta$ and $\hat{\zeta}$ preserve functional calculus, this is sufficient to show that $\hat{\zeta} = \zeta$ on normal elements. This finishes off the case $A=M_2$.

A general W*-algebra of type $I_2$ is of the form $A\cong L^\infty(\Omega,\mu,M_2)$ for a suitable measure space $(\Omega,\mu)$. Let $\zeta:\Cl(A)\to\Cl(B)$ be an almost $*$-homomorphism. We first show that $\zeta$ uniquely extends to a bounded $*$-homomorphism on the *-subalgebra of simple functions. For a measurable set $\Gamma\subseteq\Omega$, let $\chi_\Gamma : \Omega\to\{0,1\}$ be the associated indicator function. For nonempty $\Gamma$, the algebra elements of the form $\alpha\chi_\Gamma$ for $\alpha\in M_2$ form a C*-subalgebra isomorphic to $M_2$ itself (with different unit). By the previous, we know that $\zeta$ uniquely extends to a $*$-homomorphism on this subalgebra. Furthermore, $\zeta$ behaves as expected on a simple function $\sum_{i=1}^n \alpha_i \chi_{\Gamma_i}$: assuming that the $\Gamma_i$'s form a partition of $\Omega$, we have $\alpha_i \chi_{\Gamma_i} \cdot \alpha_j \chi_{\Gamma_j} = 0$ for $i\neq j$, and hence $\zeta$ is additive on the sum, which implies
\be
\label{simpleadd}
\zeta\left( \sum_{i=1}^n \alpha_i \chi_{\Gamma_i} \right) = \sum_{i=1}^n \zeta(\alpha_i) \zeta(\chi_{\Gamma_i}).
\ee
We show that $\zeta$ is linear on the sum of two self-adjoint simple functions. By choosing a common refinement, it is enough to consider the case that the two partitions are the same. But then additivity follows from~\eqref{simpleadd} and additivity on $M_2$. Multiplicativity on unitary simple functions is analogous. Since the proof of Lemma~\ref{fextend} still goes through in the present situation (where the *-algebra of simple functions is generally not a C*-algebra), we conclude that $\zeta$ extends uniquely to a $*$-homomorphism on the simple functions. By construction, this $*$-homomorphism is bounded. Therefore it uniquely extends to a $*$-homomorphism $\hat{\zeta}:A\to B$ which coincides with $\zeta$ on the normal simple functions. It remains to be shown that $\hat{\zeta}(\alpha) = \zeta(\alpha)$ for all $\alpha\in \Cl(A)$.

To obtain this for a given $\alpha\in\Cl(A)$, we distinguish those points $x\in\Omega$ for which $\alpha(x)$ is degenerate from those for which it is not. Since degeneracy is detected by the vanishing of the discriminant $\tr^2 - 4\det$, the relevant set is
\[
\Delta \defin \{\: x\in\Omega \:|\: \tr(\alpha(x))^2 - 4 \det(\alpha(x)) = 0 \:\}.
\]
This set is measurable since both trace and determinant are measurable functions $M_2\to\Cl$. For every $x\in\Omega\setminus\Delta$, there is a unique unitary $\nu(x)\in\Tl(M_2)$ such that $\nu(x)^* \alpha(x) \nu(x)$ is diagonal. Since the eigenbasis of a nondegenerate self-adjoint matrix depends continuously on the matrix, it follows that the function $x\mapsto \nu(x)$ is also measurable. By arbitrarily choosing $\nu(x)\defin 1$ on $x\in\Delta$, we have constructed a unitary $\nu\in\Tl(L^\infty(\Omega,\mu,M_2))$ such that $\nu^* \alpha \nu$ is pointwise diagonal. Thanks to~\eqref{presconj}, it is therefore sufficient to prove the desired identity $\hat{\zeta}(\alpha) = \zeta(\alpha)$ on diagonal $\alpha$ only. But since these diagonal elements generate a commutative C*-subalgebra, which contains a dense *-subalgebra of simple functions on which $\hat{\zeta}$ and $\zeta$ are known to coincide, we are done because both $\hat{\zeta}$ and $\zeta$ are $*$-homomorphisms on this commutative subalgebra.

Now a general W*-algebra $A$ is a direct sum of a W*-algebra without $I_2$ summand and one that is of type $I_2$~\cite[Theorems~1.19~\&~1.31]{takesaki}. Again by considering corners, it is straightforward to check that if the fullness property holds on almost $*$-homomorphisms out of $A,B\in\Calg$, then it also holds on almost $*$-homomorphisms out of $A\oplus B$.
\end{proof}

In general, the problem of fullness is related to the cohomology of the unitary group $\Tl(A)$ as follows. Let $\zeta:\Cl(A)\to\Cl(B)$ be an almost $*$-homomorphism between C*-algebras. We can assume without loss of generality that $\im(\zeta)$ generates $B$ as a C*-algebra. For unitaries $\nu,\tau\in\Tl(A)$ and any $\alpha\in\bigcirc(A)$, we have
\begin{align*}
\zeta(\alpha)	& = \zeta\left(\tau^*\nu^*(\nu\tau)\alpha(\nu\tau)^*\nu\tau\right)			\\
		& \stackrel{\eqref{presconj}}{=} \zeta(\tau)^* \zeta(\nu)^* \zeta(\nu\tau) \zeta(\alpha) \zeta(\nu\tau)^* \zeta(\nu) \zeta(\tau)		\\
		& = \: \left( \zeta(\nu\tau)^* \zeta(\nu) \zeta(\tau)\right)^*\: \zeta(\alpha) \:\left( \zeta(\nu\tau)^* \zeta(\nu) \zeta(\tau)\right)
\end{align*}
Hence the unitary $\zeta(\nu\tau)^*\zeta(\nu)\zeta(\tau)$ commutes with $\zeta(\alpha)$. By the assumption that $\im(\zeta)$ generates $B$, this means that there exists $c(\nu,\tau)$ in the centre of $\Tl(B)$ such that
\[
\zeta(\nu\tau) = c(\nu,\tau) \zeta(\nu)\zeta(\tau).
\]
As in the theory of projective representations of groups, we can use this relation to evaluate $\zeta$ on a product of three unitaries $\nu,\tau,\chi\in\Tl(A)$, resulting in
\[
c(\nu\tau,\chi) c(\nu,\tau) \zeta(\nu)\zeta(\tau)\zeta(\chi) = \zeta(\nu\tau\chi) = c(\nu,\tau\chi)c(\tau,\chi) \zeta(\nu)\zeta(\tau)\zeta(\chi).
\]
This establishes the cocycle equation
\[
c(\tau,\chi) c(\nu\tau,\chi)^* c(\nu,\tau\chi) c(\nu,\tau)^* = 1,
\]
showing that $c$ is a $2$-cocycle on $\Tl(A)$ with values in the centre of $\Tl(B)$, which is equal to the unitary group of the centre of $B$. Unfortunately, we do not know whether this can be used to show that $\Tl(\zeta):\Tl(A)\to\Tl(B)$ is a group homomorphism, which would be enough to prove fullness in general by Proposition~\ref{fextend2}.

Let us now restate the remaining part of Problem~\ref{aCalgprob}:

\begin{problem}
Is the functor $\Calg\to\aCalg$ full in general? If so, could it even be essentially surjective?
\end{problem}

\newpage
\section{Groups as piecewise groups with self-action}
\label{secgrps}

In order to get a better intuition for the relation between C*-algebras and almost C*-algebras, it is instructive to perform analogous considerations for other mathematical structures. In this section, we investigate the case of groups, which may also be of interest in its own right.

By analogy with piecewise C*-algebras, we have:

\begin{definition}[\cite{HR}]
\label{piecewisegroupdef}
A \emph{piecewise group} is a set $G$ equipped with the following pieces of structure:
\begin{enumerate}
\item a reflexive and symmetric relation $\commute\subseteq G\times G$. If $x\commute y$, we say that $x$ and $y$ \emph{commute};
\item a binary operation $\cdot:\commute\rightarrow G$;
\item a distinguished element $1\in G$;
\end{enumerate}
such that every subset $C\subseteq G$ of pairwise commuting elements is contained in some subset $\bar{C}\subseteq G$ of pairwise commuting elements which is an abelian group with respect to the data above.
\end{definition}

Abelian groups are precisely those piecewise groups for which the commutativity relation $\commute$ is total. Piecewise groups form a category $\pGrp$ in the obvious way: 

\begin{definition}
\label{pghomdef}
Given piecewise groups $G$ and $H$, a \emph{piecewise group homomorphism} is a function $\zeta:G\rightarrow H$ such that if $g\commute h$ in $G$, then 
\be
\label{pghom}
\zeta(g) \commute \zeta(h),\qquad \zeta(gh)=\zeta(g)\zeta(h).
\ee
\end{definition}

It is straightforward to show that a piecewise group homomorphism satisfies $\zeta(1)=1$.

Considering every group as a piecewise group results in a forgetful functor $\Grp\to\pGrp$, which is faithful and reflects isomorphisms. Since it is not full (taking inverses $g\mapsto g^{-1}$ is a piecewise group homomorphism for every $G$, but a group homomorphism only if $G$ is abelian), this functor forgets some of the structure that groups have. By analogy with Definition~\ref{almostdef}, we try to recover this structure by equipping a piecewise group with a notion of inner automorphisms:

\begin{definition}
\label{almostgroupdef}
An \emph{almost group} is a pair $(G,\mathfrak{a})$ consisting of $G\in\pGrp$ and a \emph{self-action} on $G$, which is a map
\[
\mathfrak{a} : G \longrightarrow \pGrp(G,G)
\]
assigning to every element $g\in G$ a piecewise automorphism $\mathfrak{a}(g) : G \to G$ such that
	\begin{itemize}
		\item $g$ commutes with $h$ if and only if $\mathfrak{a}(g)(h) = h$;
		\item in this case, $\mathfrak{a}(gh) = \mathfrak{a}(g)\mathfrak{a}(h)$.
	\end{itemize}
\end{definition}

Almost groups form a category denoted $\aGrp$ as follows:

\begin{definition}
\label{aghomdef}
An \emph{almost group homomorphism} $\zeta:(G,\mathfrak{a})\to (H,\beta)$ is a piecewise group homomorphism $\zeta:A\to B$ such that
\be
\label{aghom}
\mathfrak{a}(\zeta(g)) (\zeta(h)) = \zeta(\mathfrak{a}(g)(h)).
\ee
\end{definition}

The forgetful functor $\Grp\to\pGrp$ factors through $\aGrp$ by associating to every group $G$ and element $g\in G$ the conjugation action,
\[
\mathfrak{a}(g)(h) \defin g^{-1} h g.
\]
Every group homomorphism $\zeta:G\to H$ respects the resulting self-actions: the condition~\eqref{ahom} becomes simply
\be
\label{presconjgrp}
\zeta(g)^{-1} \zeta(h) \zeta(g) = \zeta(g^{-1} h g).
\ee
One can ask whether this forgetful functor $\Grp\to\aGrp$ is an equivalence of categories. In contrast to the discussion of Section~\ref{secsaC}, and in particular Theorem~\ref{Wfull}, here we know the answer to be negative:

\begin{theorem}
The forgetful functor $\Grp\to\aGrp$ is not full.
\label{aGnotfull}
\end{theorem}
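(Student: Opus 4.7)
The plan is to exhibit a concrete almost group homomorphism $\tau\colon F_{2}\to\Zl$ that fails to be a group homomorphism, where $F_{2}=\langle a,b\rangle$ is the free group on two generators. Taking an abelian target is convenient: $\Zl$ has trivial conjugation self-action, so the self-action preservation axiom $\mathfrak{b}(\tau(g))(\tau(h))=\tau(\mathfrak{a}(g)(h))$ collapses to $\tau(g^{-1}hg)=\tau(h)$, i.e.\ to the requirement that $\tau$ be a class function on $F_{2}$. Moreover, centralisers of nontrivial elements in a free group are infinite cyclic, so two elements of $F_{2}$ commute iff they lie in a common cyclic subgroup, and therefore $\tau$ is a piecewise group homomorphism precisely when $\tau(c^{k})=k\tau(c)$ for every $c\in F_{2}$ and $k\in\Zl$. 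Altogether, an almost group homomorphism $F_{2}\to\Zl$ is exactly a class function on $F_{2}$ satisfying this power law.

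These two conditions are much weaker than actual multiplicativity, because one may specify $\tau$ on primitive conjugacy classes almost arbitrarily. Precisely, each nontrivial $g\in F_{2}$ has a primitive root $g_{0}$ with $g=g_{0}^{k}$, unique up to replacement of $(g_{0},k)$ by $(g_{0}^{-1},-k)$. Hence $\tau$ is determined by its values on conjugacy classes of primitive elements, subject only to $\tau([g_{0}^{-1}])=-\tau([g_{0}])$. I would set $\tau([a])=\tau([b])=1$ and $\tau([ab])=0$, set $\tau$ to $0$ on all other primitive conjugacy classes, and extend via the power law. The classes $[a],[b],[ab]$ and their inverses are pairwise distinct, as witnessed by their images $(1,0),(0,1),(1,1)$ and the corresponding negatives in the abelianisation $\Zl^{2}$, so this prescription is consistent.

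The verifications are then immediate: the power law yields piecewise additivity, the class-function property yields preservation of the trivial self-action on $\Zl$, and so $\tau$ is indeed an almost group homomorphism; but $\tau(a)+\tau(b)=2\neq 0=\tau(ab)$, so $\tau$ is not a group homomorphism, establishing non-fullness of $\Grp\to\aGrp$ on morphisms out of $F_{2}$. The only subtle point is the well-definedness of $\tau$ on $F_{2}$, which reduces to the standard fact that two primitive elements of a nonabelian free group sharing a nontrivial common power must agree up to inverse.
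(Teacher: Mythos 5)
Your proposal is correct, and it follows the same overall strategy as the paper: witness non-fullness by a map $\Fl_2\to\Zl$, using that the target is abelian (so the self-action condition \eqref{aghom} reduces to conjugation-invariance, i.e.\ being a class function) and that commuting elements of a free group are powers of a common element (so the piecewise condition reduces to the power law $\tau(c^k)=k\tau(c)$). Where you differ is in how the witness is produced. The paper writes down one explicit formula---count occurrences of the subword $ab$ minus occurrences of $b^{-1}a^{-1}$ in the cyclically reduced word---and verifies the two conditions directly, the only nontrivial point being $\widehat{u^k}=\hat{u}^k$ as cyclic words. You instead classify \emph{all} almost group homomorphisms $\Fl_2\to\Zl$: they are exactly the functions obtained by freely prescribing values on primitive conjugacy classes subject to $\tau([g_0^{-1}])=-\tau([g_0])$ and extending by the power law; this needs the uniqueness of the primitive root up to inverse, which you correctly identify as the one point requiring care (it follows from centralizers being infinite cyclic, the same fact both proofs already use). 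Your route buys a transparent picture of how much freedom there is---making it obvious that fullness fails badly---at the cost of the well-definedness argument; the paper's route trades that for a small combinatorial check. One wording slip: ``set $\tau$ to $0$ on all other primitive conjugacy classes'' cannot literally include $[a^{-1}]$, $[b^{-1}]$, which are forced to the value $-1$ by the antisymmetry constraint; your consistency check via the abelianization images $(1,0),(0,1),(1,1)$ and their negatives shows you intend exactly this, so it is a phrasing issue rather than a gap. With that read, the verification is complete: $\tau$ is an almost group homomorphism with $\tau(a)+\tau(b)=2\neq 0=\tau(ab)$, so $\Grp\to\aGrp$ is not full.
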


So in general, going from a group to an almost group still constitutes a loss of structure.

\begin{proof}
We provide an explicit example of an almost group homomorphism between groups that is not a group homomorphism.

Let $\Fl_2$ be the free group on two generators $a$ and $b$. For any word $w\in\Fl_2$, let $\hat{w}$ be the cyclically reduced word associated to $w$. Then consider the map $\zeta:\Fl_2\to\Zl$ defined as $\zeta(w)$ being the number of times that the generator $a$ directly precedes the generator $b$ in $\hat{w}$, minus the number of times that the generator $b^{-1}$ directly precedes the generator $a^{-1}$ in $\hat{w}$. By construction, this is invariant under conjugation and therefore satisfies~\eqref{presconjgrp}. If $v,w\in\Fl_2$ commute, then they must be of the form $v=u^m$ and $w=u^n$ for some $u\in\Fl_2$ and $m,n\in\Zl$~\cite[Proposition~2.17]{ls}. Hence to verify that $\zeta$ is a piecewise group homomorphism, it is enough to show that $\zeta(u^k)=\zeta(u)^k$ for all $k\in\Zl$. This is the case because we have $\hat{u^k} = \hat{u}^k$ at the level of reduced cyclic words.

On the other hand, $\zeta$ is not a group homomorphism since $\zeta(a) = \zeta(b) = 0$, while $\zeta(ab) = 1$.
\end{proof}

As the second half of the proof indicates, part of the problem is that a free group has very few commuting elements. One can hope that the situation will be better for finite groups:

\begin{problem}
Is the restriction of the functor $\Grp\to\aGrp$ from finite groups to finite almost groups an equivalence of categories?
\end{problem}

\newpage
\newgeometry{top=2cm}
\bibliographystyle{unsrt}
\bibliography{Calgs_as_sheaves}

\begin{thebibliography}{10}

\bibitem{strocchi}
Franco Strocchi.
\newblock {\em An introduction to the mathematical structure of quantum
  mechanics}, volume~28 of {\em Advanced Series in Mathematical Physics}.
\newblock World Scientific, second edition, 2008.

\bibitem{landsman}
Nicolaas~P. Landsman.
\newblock {\em Algebraic Quantum Mechanics}, pages 6--10.
\newblock Springer, 2009.

\bibitem{haag}
Rudolf Haag.
\newblock {\em Local quantum physics}.
\newblock Texts and Monographs in Physics. Springer-Verlag, Berlin, second
  edition, 1996.
\newblock Fields, particles, algebras.

\bibitem{piecewise}
Benno van~den Berg and Chris Heunen.
\newblock Noncommutativity as a colimit.
\newblock {\em Applied Categorical Structures}, 20(4):393--414, 2012.
\newblock \href{http://arxiv.org/abs/1003.3618}{arXiv:1003.3618}.

\bibitem{thing}
Andreas D{\"o}ring and Chris Isham.
\newblock ``{W}hat is a thing?'': Topos theory in the foundations of physics.
\newblock In {\em New Structures for Physics}, volume 813 of {\em Lecture Notes
  in Physics}, pages 753--937. Springer, 2010.
\newblock \href{http://arxiv.org/abs/0803.0417}{arXiv:0803.0417}.

\bibitem{cecilia}
Cecilia Flori.
\newblock {\em A First Course in Topos Quantum Theory}.
\newblock Lecture Notes in Physics. Springer, 2013.

\bibitem{hls}
Chris Heunen, Nicolaas~P. Landsman, and Bas Spitters.
\newblock A topos for algebraic quantum theory.
\newblock {\em Comm. Math. Phys.}, 291(1):63--110, 2009.
\newblock \href{http://arxiv.org/abs/0709.4364}{arXiv:0709.4364}.

\bibitem{folland}
Gerald~B. Folland.
\newblock {\em A course in abstract harmonic analysis}.
\newblock CRC Press, 1995.

\bibitem{nilsen}
May Nilsen.
\newblock {$C^*$}-bundles and {$C_0(X)$}-algebras.
\newblock {\em Indiana Univ. Math. J.}, 45(2):463--477, 1996.

\bibitem{grinbaum}
Alexei Grinbaum.
\newblock Reconstruction of quantum theory, 2006.
\newblock
  \href{http://philsci-archive.pitt.edu/2703/}{philsci-archive.pitt.edu/2703/}.

\bibitem{hardy1}
Lucien Hardy.
\newblock Quantum theory from five reasonable axioms, 2001.
\newblock \href{http://arxiv.org/abs/quant-ph/0101012}{arXiv:quant-ph/0101012}.

\bibitem{hardy2}
Lucien Hardy.
\newblock Reformulating and reconstructing quantum theory, 2011.
\newblock \href{http://arxiv.org/abs/1104.2066}{arXiv:1104.2066}.

\bibitem{shulman}
Michael Shulman.
\newblock Exact completions and small sheaves.
\newblock {\em Theory Appl. Categ.}, 27(7):97--173, 2012.
\newblock \href{http://arxiv.org/abs/1203.4318}{arXiv:1203.4318}.

\bibitem{MM}
Saunders Mac~Lane and Ieke Moerdijk.
\newblock {\em Sheaves in geometry and logic}.
\newblock Universitext. Springer-Verlag, New York, 1994.
\newblock A first introduction to topos theory. Corrected reprint of the 1992
  edition.

\bibitem{isbell}
J.~R. Isbell.
\newblock Adequate subcategories.
\newblock {\em Illinois J. Math.}, 4:541--552, 1960.

\bibitem{garner}
Richard Garner.
\newblock Remarks on exactness notions pertaining to pushouts.
\newblock {\em Theory Appl. Categ.}, 27(1):2--9, 2012.
\newblock \href{http://arxiv.org/abs/1201.0805}{arXiv:1201.0805}.

\bibitem{jacobs}
Bart Jacobs.
\newblock New directions in categorical logic, for classical, probabilistic and
  quantum logic.
\newblock {\em Logical Methods in Computer Science}, 11(3), 2016.
\newblock \href{https://arxiv.org/abs/1205.3940}{arXiv:1205.3940}.

\bibitem{reyes}
Manuel~L. Reyes.
\newblock Sheaves that fail to represent matrix rings.
\newblock In {\em Ring theory and its applications}, volume 609 of {\em
  Contemp. Math.}, pages 285--297. Amer. Math. Soc., Providence, RI, 2014.

\bibitem{KR1}
Alexander Rosenberg and Maxim Kontsevich.
\newblock Noncommutative spaces.
\newblock In {\em Selected Papers on Noncommutative Geometry}. New Prairie
  Press, 2014.
\newblock
  \href{http://newprairiepress.org/ebooks/1/}{newprairiepress.org/ebooks/1/}.

\bibitem{KR2}
Alexander Rosenberg.
\newblock Noncommutative `spaces' and `stacks'.
\newblock In {\em Selected Papers on Noncommutative Geometry}. New Prairie
  Press, 2014.
\newblock
  \href{http://newprairiepress.org/ebooks/1/}{newprairiepress.org/ebooks/1/}.

\bibitem{ML}
Saunders Mac~Lane.
\newblock {\em Categories for the working mathematician}, volume~5 of {\em
  Graduate Texts in Mathematics}.
\newblock Springer-Verlag, New York, second edition, 1998.

\bibitem{KS}
Simon Kochen and Ernst Specker.
\newblock The problem of hidden variables in quantum mechanics.
\newblock {\em Journal of Mathematics and Mechanics}, 17(1):59--87, 1967.

\bibitem{HR}
Chris Heunen and Manuel~L. Reyes.
\newblock Active lattices determine {${\rm AW}^\ast$}-algebras.
\newblock {\em J. Math. Anal. Appl.}, 416(1):289--313, 2014.
\newblock \href{http://arxiv.org/abs/1212.5778}{arXiv:1212.5778}.

\bibitem{propertystructurestuff}
nLab.
\newblock Stuff, structure, property, 2014.
\newblock
  \href{http://ncatlab.org/nlab/revision/stuff,+structure,+property/38}{ncatlab.org/nlab/revision/stuff,+structure,+property/38}.

\bibitem{SU}
Sam Staton and Sander Uijlen.
\newblock Effect algebras, presheaves, non-locality and contextuality.
\newblock In {\em Automata, Languages, and Programming}, volume 9135 of {\em
  Lecture Notes in Computer Science}, pages 401--413. Springer, 2015.
\newblock Extended abstract.

\bibitem{phillips}
N.~Christopher Phillips.
\newblock Continuous-trace {C}*-algebras not isomorphic to their opposite
  algebras.
\newblock {\em Internat. J. Math.}, 12(3):263--275, 2001.

\bibitem{BMU}
Howard Barnum, Markus~P. M{\"u}ller, and Cozmin Ududec.
\newblock Higher-order interference and single-system postulates characterizing
  quantum theory.
\newblock {\em New J. Phys.}, 16:123029, 2014.
\newblock \href{http://arxiv.org/abs/1403.4147}{arXiv:1403.4147}.

\bibitem{hamhalter}
Jan Hamhalter.
\newblock {\em Quantum measure theory}, volume 134 of {\em Fundamental Theories
  of Physics}.
\newblock Kluwer, 2003.

\bibitem{stormer}
Erling St{\o}rmer.
\newblock On the {J}ordan structure of {C}*-algebras.
\newblock {\em Trans. Amer. Math. Soc.}, 120(3):438--447, 1965.

\bibitem{takesaki}
Masamichi Takesaki.
\newblock {\em Theory of Operator Algebras I}.
\newblock Springer, 2001.

\bibitem{ls}
Roger~C. Lyndon and Paul~E. Schupp.
\newblock {\em Combinatorial group theory}.
\newblock Classics in mathematics. Springer, 2001.
\newblock Reprint of the 1977 edition.

\end{thebibliography}

\end{document}